\numberwithin{equation}{section}
\pgfplotsset{width=10cm,compat=1.9}
\newcommand{\NN}{\mathbb{N}}
\newcommand{\RR}{\mathbb{R}}
\newcommand{\ZZ}{\mathbb{Z}}
\newcommand{\cC}{\mathcal{C}}
\newcommand{\cS}{\mathcal{S}}
\newcommand{\Homeo}{\operatorname{Homeo}}
\newcommand{\Homeop}{\operatorname{Homeo_+}}
\newcommand{\Fix}{\operatorname{Fix}}
\newcommand{\Stab}[2]{\operatorname{Stab}_{#1}(#2)}
\newcommand{\Gap}{\operatorname{Gap}}
\newcommand{\Core}{\operatorname{Core}}
\newcommand{\PSL}{\operatorname{PSL}_2}
\newcommand{\SO}{\operatorname{SO}(2)}
\newcommand{\id}{\mathrm{id}}
\newcommand{\opi}[3][S^1]{(#2, #3)_{#1}}
\newcommand{\ropi}[3][S^1]{[#2, #3)_{#1}}
\newcommand{\lopi}[3][S^1]{(#2, #3]_{#1}}
\newcommand{\cldi}[3][S^1]{[#2, #3]_{#1}}
\newcommand{\closure}[1]{\overline{#1}}
\newcolumntype{L}{>{\centering\arraybackslash}m{4.3cm}}
\title{Ping-pong dynamics of hyperbolic-like actions with non-simple points}
\author[Kim]{KyeongRo Kim}
\address{\hskip-\parindent
Research institute of Mathematics\\
Seoul National University\\
GwanAk-Ro 1, GwanAk-Gu, Seoul 08826, Korea}
\email{kyeongrokim14@gmail.com}
\author[Triestino]{Michele Triestino}
\address{\hskip-\parindent
Michele Triestino\\
	Université Bourgogne Europe, CNRS, IMB UMR 5584, 21000 Dijon, France\\
    \& Institut Universitaire de France}
\email{michele.triestino@ube.fr}
\date{\today}
\begin{document}

\begin{abstract} 
A hyperbolic-like group is a subgroup of $\Homeop(S^1)$ such that every non-trivial element has exactly two fixed points, one attracting and one repelling.
We investigate the ping-pong dynamics of hyperbolic-like groups, inspired by a conjecture of Bonatti.
We show the existence of a proper ping-pong partition for any pair of non-cyclic point stabilizers.
More precisely, our results explicitly provide such a ping-pong partition.
\end{abstract}

\subjclass{Primary: 57M60; Secondary: 37C85, 37B05}
\keywords{groups of circle homeomorphisms, hyperbolic-like, ping-pong partitions}
 
\maketitle
\section{Introduction} \label{Sec:intro}

An element $g$ in the group $\Homeop(S^1)$ of orientation-preserving circle homeomorphisms is \emph{hyperbolic-like} if $g$ has exactly two fixed points such that one of them is attracting and the other repelling. 
We say that a non-trivial subgroup $G\le \Homeop(S^1)$ is \emph{hyperbolic-like} if any non-trivial element is hyperbolic-like. This property is invariant under conjugacy in $\Homeop(S^1)$. A (faithful) group action $G\to\Homeo_+(S^1)$ is \emph{hyperbolic-like} if its image is.

The main examples of hyperbolic-like (sub)groups are given by torsion-free discrete subgroups of $\PSL(\RR)$ (and their conjugates in $\Homeop(S^1)$), but there are examples of different nature, first exhibited by Kova\v{c}evi\'{c} \cite{Kovacevic2} in relation to the celebrated works of Casson and Jungreis \cite{CassonJungreis} and Gabai \cite{Gabai} on convergence groups. The examples of Kova\v{c}evi\'{c} are built by operating a kind of surgery of subgroups of $\PSL(\RR)$, recently formalized by Carnevale in his Ph.D.\ thesis \cite{theseJoao}.  It is expected that this is the only way to produce examples of hyperbolic-like subgroups, up to \emph{semi-conjugacy}; see \cite{theseJoao}*{Conjecture 1.12}.
\begin{conj}[Bonatti]\label{Conj:Bonatti}
    Let $G\leq \Homeop(S^1)$ be a subgroup such that any non-trivial element has at most $2$ fixed points, and whose action is minimal. Assume that the action of $G$ is not semi-conjugate to any subgroup of $\PSL(\RR)\leq  \Homeop(S^1)$. Then, $G$ splits as an amalgamated product over an abelian subgroup.
\end{conj}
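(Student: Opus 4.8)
The plan is to follow the dichotomy suggested by the title and by the paper's main theorem: either every point stabilizer of $G$ is cyclic, in which case the action is a convergence action and hence semi-conjugate into $\PSL(\RR)$, or there is a \emph{non-simple point}, i.e.\ a point whose stabilizer is non-cyclic, from which one manufactures an amalgamated splitting by ping-pong. Under the standing hypotheses that the action of $G$ is minimal and \emph{not} semi-conjugate to any subgroup of $\PSL(\RR)$, the first alternative must be excluded, so I would first extract (at least) two non-simple points and their non-cyclic stabilizers $A=\Stab{G}{a}$ and $B=\Stab{G}{b}$. The global goal is then to realise $G$ as an amalgam $A *_{A\cap B} B$ — or, via Bass--Serre theory, as an amalgam over a conjugate of $A\cap B$ — and to verify that the edge group is abelian.

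The heart of the argument is the construction of a \emph{proper ping-pong partition} adapted to the pair $(A,B)$, which is exactly what the paper's main result supplies. Concretely, I would exploit the North--South-type dynamics of hyperbolic-like elements to carve $S^1$ into regions $X_A$ and $X_B$ controlled respectively by $A\setminus(A\cap B)$ and $B\setminus(A\cap B)$: every nontrivial element of $A$ not fixing $b$ should contract the part of the circle away from $A$ into a neighbourhood $X_A$ anchored at $a$, and symmetrically for $B$. The subtlety created by non-simplicity is that the ``other'' fixed points of the non-cyclic group $A$ are spread across the invariant set $\Fix(A)\setminus\{a\}$ rather than concentrated at a single attracting point, so the regions must be built from these fixed-point sets and their gaps rather than from single points. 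Once $X_A\cap X_B=\emptyset$ and the cross-mapping inclusions $(A\setminus(A\cap B))\,X_B\subseteq X_A$ and $(B\setminus(A\cap B))\,X_A\subseteq X_B$ hold, the ping-pong lemma for amalgams yields $\langle A,B\rangle \cong A *_{A\cap B} B$.

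To see that the edge group $C=A\cap B=\Stab{G}{a}\cap\Stab{G}{b}$ is abelian, I would invoke Hölder's theorem: every nontrivial element of $C$ fixes both $a$ and $b$, hence — a nontrivial element having at most two fixed points — acts \emph{freely} on each of the two open arcs of $S^1\setminus\{a,b\}$; a group acting freely by orientation-preserving homeomorphisms of an interval is abelian, so $C$ is abelian. Promoting the splitting of $\langle A,B\rangle$ to a splitting of all of $G$ is where I would bring in Bass--Serre theory: the ping-pong data should be organised into a $G$-action on a simplicial tree whose vertex stabilizers are conjugates of point stabilizers and whose edge stabilizers are conjugates of $C$. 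Using minimality to rule out a global fixed point (and an invariant end or axis), the action would be cocompact with a single edge orbit, giving the desired decomposition of $G$ as an amalgamated product over an abelian subgroup.

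The main obstacle is the very first step, namely deducing the existence of non-simple points from the failure of semi-conjugacy to $\PSL(\RR)$. If all point stabilizers were cyclic, the hypothesis that nontrivial elements have at most two fixed points ought to force $G$ to be a convergence group, whence the convergence-group theorem of Casson--Jungreis, Gabai, and Tukia would semi-conjugate it into $\PSL(\RR)$, contrary to hypothesis; making this implication precise — especially accommodating elements with fewer than two fixed points (parabolic-like behaviour) and controlling a possible exceptional minimal set before minimality is invoked — is the delicate dynamical input. A secondary difficulty is guaranteeing that the partition is genuinely \emph{proper} in the presence of non-simple points: the fixed-point sets of $A$ and $B$ may interleave intricately, and one must check that their gaps separate uniformly so that the cross-mapping inclusions hold for \emph{all} nontrivial coset representatives simultaneously. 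I expect these two points, rather than the ping-pong mechanism or the Hölder argument, to carry the real weight of the proof.
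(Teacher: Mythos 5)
The statement you are trying to prove is \refconj{Bonatti}, which the paper states as an \emph{open conjecture} and does not prove; the paper's actual theorems only establish a much weaker partial result (a proper ping-pong partition, hence a free-product decomposition, for the subgroup generated by two non-cyclic point stabilizers of a \emph{hyperbolic-like} group). Your proposal is a reasonable road map, but the two places you yourself flag as ``obstacles'' are not technicalities to be smoothed over --- they are precisely the open content of the conjecture, and at least one of them is demonstrably not fixable by the route you suggest. Concretely: your first step claims that if all point stabilizers are cyclic, then ``at most two fixed points per non-trivial element'' should force $G$ to be a convergence group, whence Casson--Jungreis--Gabai--Tukia applies. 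This cannot work as stated. A torsion-free word-hyperbolic group such as a closed hyperbolic three-manifold group has only cyclic abelian subgroups, so any hyperbolic-like action of such a group has all point stabilizers cyclic; if your implication held, the group would be a convergence group on $S^1$, hence virtually Fuchsian, which is false --- and this would immediately prove \refconj{Frankel}, which the paper records as open. The convergence property is a condition on \emph{sequences} of group elements, and the pointwise bound on fixed points gives no control over degenerations of sequences; bridging that gap is essentially the whole problem.

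The second genuine gap is the promotion from a splitting of $\langle A,B\rangle$ to a splitting of $G$. The paper's ping-pong results apply to the two chosen stabilizers and give $\langle A,B\rangle\cong A*B$ (note that by \refcor{commute} the intersection $A\cap B=\Stab{G}{a}\cap\Stab{G}{b}$ is in fact \emph{trivial} when $a\notin\{b,\overline b\}$, so your Hölder argument for the edge group, while correct, is addressing a non-issue: the amalgam you would get is a free product, and the abelian edge group in the conjecture must arise from some other, unconstructed, decomposition of the full group). Nothing in the ping-pong data produces a $G$-invariant tree: the partition is adapted to one pair of stabilizers, $G$ need not be generated by them, and there is no argument that the $G$-translates of the partition fit together into the pattern of halfspaces of a simplicial tree with a single edge orbit. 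Asserting that ``the ping-pong data should be organised into a $G$-action on a simplicial tree'' is naming the missing theorem, not proving it. In short, the proposal correctly identifies the ingredients the paper provides and the ingredients that are missing, but the missing ones are exactly why this remains a conjecture.
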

This problem is related to a conjecture of Frankel \cites{Frankel13,Frankel18}: 
\begin{conj}[Frankel]\label{Conj:Frankel}
Closed hyperbolic three-manifold groups do not admit hyperbolic-like actions on the circle.
\end{conj}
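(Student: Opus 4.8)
The plan is to \emph{assume Bonatti's conjecture} (\refconj{Bonatti}) and deduce \refconj{Frankel} from it by a splitting obstruction; this is the natural bridge between the two statements. Let $\Gamma=\pi_1(M)$ for a closed hyperbolic $3$-manifold $M$, and suppose toward a contradiction that $\rho\colon\Gamma\to\Homeop(S^1)$ is a faithful hyperbolic-like action. Recall that $\Gamma$ acts geometrically on $\HH^3$, so it is word-hyperbolic with Gromov boundary $\partial\Gamma\cong S^2$; in particular $\Gamma$ is one-ended, it has cohomological dimension $\operatorname{cd}\Gamma=3$, and by Preissmann's theorem every abelian subgroup is infinite cyclic or trivial.

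First I would reduce to a minimal action using the standard trichotomy for groups acting on $S^1$. If $\rho$ is minimal or has an exceptional minimal set, collapsing the complementary wandering intervals yields a minimal hyperbolic-like action $\bar\rho$ semi-conjugate to $\rho$; it stays faithful, since a nontrivial hyperbolic-like element, having only two fixed points, cannot fix the infinite minimal set. In the remaining case there is a finite orbit $F$, and the kernel $\Gamma_0$ of the permutation action $\Gamma\to\mathrm{Sym}(F)$ is a finite-index subgroup fixing a point $p\in F$; cutting at $p$ makes $\Gamma_0$ act on $S^1\setminus\{p\}\cong\RR$ with every nontrivial element having exactly one fixed point. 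Such an action forces $\Gamma_0$ to be elementary---abelian by Hölder's theorem if it fixes a further point, and otherwise conjugate into $\Aff(\RR)$, hence solvable, by Solodov's theorem---contradicting that the closed hyperbolic $3$-manifold group $\Gamma_0$ contains nonabelian free subgroups. Thus we may assume $\rho=\bar\rho$ is minimal and faithful.

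Next I would verify the hypothesis of \refconj{Bonatti} that $\bar\rho$ is not semi-conjugate to a subgroup of $\PSL(\RR)$. If it were, then, $\bar\rho$ being minimal, it would be conjugate to the action of $\phi(\Gamma)$ on the limit set of some representation $\phi\colon\Gamma\to\PSL(\RR)$; faithfulness of $\bar\rho$ forces $\phi$ to be injective, since an element fixing an infinite limit set pointwise fixes at least three points of $S^1$ and is therefore trivial in $\PSL(\RR)$. Matching fixed points across the conjugacy shows every nontrivial $\phi(g)$ has exactly two fixed points on the circle, so $\phi(\Gamma)$ is a non-elementary subgroup of $\PSL(\RR)$ with no parabolic and no elliptic elements. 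A purely hyperbolic non-elementary subgroup of $\PSL(\RR)$ is discrete, hence a torsion-free Fuchsian group, whence $\operatorname{cd}\phi(\Gamma)\le 2$; but $\phi(\Gamma)\cong\Gamma$ has cohomological dimension $3$, a contradiction. Therefore $\bar\rho$ satisfies all the hypotheses of \refconj{Bonatti}.

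Finally, \refconj{Bonatti} provides a splitting of $\Gamma$ as an amalgamated product over an abelian---hence cyclic or trivial---subgroup $C$. If $C$ is trivial the splitting is a nontrivial free product, so $\Gamma$ has more than one end, contradicting one-endedness; if $C\cong\ZZ$, then $\Gamma$ splits over a two-ended subgroup, which by Bowditch's theorem forces a local cut point in $\partial\Gamma$. Since $\partial\Gamma\cong S^2$ has no local cut points, both cases are impossible, completing the contradiction. The main obstacle is, of course, that the whole argument is conditional on the open conjecture \refconj{Bonatti}; within the unconditional part, the most delicate steps are the minimality reduction---where one must invoke Hölder's and Solodov's theorems to eliminate the finite-orbit case---and the careful bookkeeping of fixed points under semi-conjugacy needed to rule out the $\PSL(\RR)$ alternative.
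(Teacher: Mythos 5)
The fundamental problem is that the statement you are asked to prove is \refconj{Frankel}, which the paper records as an \emph{open conjecture} and does not prove; it is mentioned only as being ``related'' to \refconj{Bonatti}. Your argument is explicitly conditional on \refconj{Bonatti}, which is itself Conjecture~\ref{Conj:Bonatti} of the same paper and equally open. A derivation of one open conjecture from another, however cleanly executed, is not a proof of the statement: the key input (the splitting of $G$ over an abelian subgroup) is precisely the part that nobody knows how to establish, and the paper's actual contributions (Theorems~\ref{Thm1:linked} and~\ref{Thm1:unlinked}) only produce ping-pong partitions for pairs of non-cyclic point stabilizers, which is far from the full splitting statement. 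So the genuine gap is the reliance on an unproven hypothesis; what you have written is an implication \refconj{Bonatti} $\Rightarrow$ \refconj{Frankel}, which is worth recording as motivation but cannot be ``spliced in'' as a proof.

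Within the conditional argument itself, one step deserves more care. When you collapse the gaps of an exceptional minimal set you assert that the quotient action is again \emph{hyperbolic-like}. This can fail: if the two fixed points of some $g$ are the two endpoints of a single gap, that gap is $g$-invariant and collapses to a single fixed point with parabolic-like local dynamics, so the quotient element has exactly one fixed point rather than two. The collapsed action does still satisfy the weaker hypothesis of \refconj{Bonatti} (every non-trivial element has at most two fixed points), so the application of the conjecture survives, but the claim as phrased is incorrect and should be weakened accordingly. The remaining unconditional steps (elimination of finite orbits via \refthm{elementary}, the cohomological-dimension obstruction to a semi-conjugacy into $\PSL(\RR)$ using discreteness of purely hyperbolic non-elementary Fuchsian groups, and the one-endedness/no-local-cut-point obstruction to splittings over trivial or cyclic subgroups) are standard and essentially sound.
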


When a hyperbolic-like group $H$ has a global fixed point, classical results of Hölder \cite{Holder} and Solodov \cite{Solodov} (or a relatively modern result of Kova\v{c}evi\'{c} \cite{Kovacevic1}) give that the group is isomorphic to a subgroup of $(\RR,+)$, and its action is semi-conjugate to the action of an elementary loxodromic subgroup of $\PSL(\RR)$; see \refthm{elementary} for the precise statement. We will say that a hyperbolic-like group is \emph{non-elementary} if it does not admit any global fixed point; equivalently, a hyperbolic-like group is non-elementary if and only if it is non-abelian (see \refcor{commute}). After the work \cite{BonattiCarnevaleTriestino24} from the thesis of Carnevale, we have that a non-elementary hyperbolic-like subgroup of $\Homeop(S^1)$ is necessarily \emph{(locally) discrete}; see  \refprop{locDiscrete}.

In this paper, we consider the case of non-elementary hyperbolic-like groups containing non-cyclic point-stabilizers (from the discussion above, these are non-cyclic free abelian), and show that there is a strong dynamical constraint on such subgroups. Instead of giving here the precise statements, we highlight a straightforward consequence of the main results, in accordance with the prediction of \refconj{Bonatti}.

\begin{cor}
    Let $G$ be a non-elementary hyperbolic-like group. If $G$ is generated by a pair of non-cyclic abelian subgroups $H$ and $K$, then $G$ is isomorphic to the free product of $H$ and $K$.
\end{cor}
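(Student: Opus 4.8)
The plan is to deduce the statement from our main result---the existence of a proper ping-pong partition for a pair of non-cyclic point stabilizers---by feeding it into the classical Table-Tennis Lemma (the ping-pong lemma for free products). Throughout, write $G=\langle H,K\rangle$ and note that $H$ and $K$, as subgroups of the hyperbolic-like group $G$, are themselves hyperbolic-like.

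First I would fix the geometry. Since any two non-trivial elements of the abelian group $H$ commute, by \refcor{commute} they share the same fixed-point pair, so all non-trivial elements of $H$ fix a common attracting/repelling pair $\{a^-,a^+\}\subset S^1$; likewise every non-trivial element of $K$ fixes a common pair $\{b^-,b^+\}$. These two pairs must be disjoint, for a common fixed point $p$ of $H$ and $K$ would be fixed by every generator of $G$, giving $G$ a global fixed point and contradicting that $G$ is non-elementary. In particular $H\cap K=\{1\}$: a non-trivial element lying in both would have to fix $\{a^-,a^+\}$ and $\{b^-,b^+\}$ simultaneously, impossible for a hyperbolic-like element with exactly two fixed points. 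Replacing $H$ and $K$ by the full stabilizers $\Stab{G}{a^+}$ and $\Stab{G}{b^+}$---still non-cyclic, since they contain $H$ and $K$ respectively---puts us in the precise setting of a pair of distinct non-cyclic point stabilizers.

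Next I would apply the main theorem to this pair to obtain a proper ping-pong partition, and read off from it two disjoint non-empty subsets $X_H,X_K\subset S^1$ enjoying the ping-pong property that $h\cdot X_K\subseteq X_H$ for every non-trivial $h\in\Stab{G}{a^+}$ and $k\cdot X_H\subseteq X_K$ for every non-trivial $k\in\Stab{G}{b^+}$; a fortiori these inclusions hold for all non-trivial $h\in H$ and $k\in K$. Because $H$ and $K$ are non-cyclic abelian, hence infinite, the order hypothesis of the Table-Tennis Lemma is satisfied. Feeding the sets $X_H,X_K$ and the action of $G$ on $S^1$ into that lemma then shows that every non-trivial reduced word alternating between $H$ and $K$ carries a suitable base point out of its starting set, and so acts non-trivially on $S^1$. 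Consequently the canonical surjection $H*K\to G$ is injective, i.e.\ $G\cong H*K$.

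I expect essentially all of the difficulty to sit upstream, in the construction of the partition (the main theorem); the argument above is bookkeeping once that is in hand. Within this deduction, the one delicate point is that the ping-pong inclusions are required for \emph{every} non-trivial element of the stabilizers at once, not merely for a finite generating set---and this matters precisely because the stabilizers are non-cyclic, hence act densely near their fixed points, so no finite set of generators suffices. This uniformity is exactly what the properness of the partition, together with the careful placement of the non-simple points on the boundaries of $X_H$ and $X_K$, is designed to supply; it is also what makes the passage from the stabilizers down to the given subgroups $H$ and $K$ harmless.
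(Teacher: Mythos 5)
Your proposal is correct and follows essentially the same route as the paper: the corollary is deduced by placing $H$ and $K$ inside the stabilizers of the (necessarily non-simple, distinct, non-companion) common fixed points guaranteed by \refcor{commute} and \refthm{elementary}, invoking Theorems \ref{Thm1:linked}/\ref{Thm1:unlinked} to produce a proper ping-pong partition for those stabilizers, and then applying the classical ping-pong lemma. The additional bookkeeping you supply (disjointness of the fixed-point pairs via non-elementarity, triviality of $H\cap K$, and the a fortiori passage from stabilizers to the given subgroups) is exactly the reduction the paper leaves implicit.
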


More precisely, we prove that the subgroups $H$ and $K$ from the statement admit an explicit ping-pong partition, and thus $\langle H,K\rangle \cong H*K$, by the classical ping-pong lemma (see for instance Maskit \cite{Maskit}*{Theorem VII.A.10}). Here we say that given two subgroups $H$ and $K$ of $\Homeop(S^1)$, a \emph{proper ping-pong partition} $(U_H,U_K)$ for $H$ and $K$ is a pair of non-empty disjoint open subsets $U_H$ and $U_K$ of $S^1$ with finitely many connected components such that
\[(H\setminus \{\id\})(U_K)\subset U_H\quad \text{and}\quad (K\setminus \{\id\})(U_H)\subset U_K.\]
Our description of the possible ping-pong partitions depends on the configuration of the fixed points of the subgroups $H$ and $K$, and it will postponed to the next section, after the introduction of some preliminary terminology and notation.

\section{Statement of the main results}

\subsection{Basic notation}

Here we introduce the main notation that we will use throughout the text.

\subsubsection{Topology of $S^1$}
In this paper we consider the usual \emph{circular order} $c$ on the circle $S^1$ defined as follows:
given a triple $(x,y,z)\in S^1\times S^1\times S^1$, we set $c(x,y,z)=1$ if $x,y,z$ are arranged in counterclockwise order, $c(x,y,z)=-1$ if $x,y,z$ are arranged in clockwise order and $c(x,y,z)=0$, otherwise.
Given $n\ge 3$ points $x_1,\ldots, x_n\in S^1$, we write $x_1<\cdots<x_n$ if $c(x_1,x_i,x_{i+1})=1$ for all $i\in \{2,\ldots,n-1\}$; when $c(x_1,x_i,x_{i+1})\ge 0$ for all $i\in \{2,\ldots,n-1\}$, we write $x_1\leq  \cdots\leq x_n$.
For a pair of points $x,y\in S^1$, we define the open interval $\opi{x}{y}$ as
$\opi{x}{y}=\{z\in S^1: c(x,z,y)=1\}$.
In the usual way, we also define the intervals $\lopi{x}{y}$, $\ropi{x}{y}$, and $\cldi{x}{y}$.
For any two-point subsets $\{x,y\}$ and $\{u,v\}$ of $S^1$, we say that $\{x,y\}$ and $\{u,v\}$ are \emph{linked} if $|\opi{x}{y}\cap \{u,v\}|=|\opi{y}{x}\cap \{u,v\}|=1$. Otherwise, they are \emph{unlinked}.

For a closed subset $A$ of $S^1$ containing at least two points, a component of $S^1\setminus A$ is called a \emph{gap} of $A$.
We say that a point $p$  in $A$ is \emph{accumulated on one side} in $A$ if there is a unique gap of $A$ whose boundary contains $p$. If there is no gap of $A$ whose boundary contains $p$, then $p$ is said to be \emph{accumulated on two sides} in $A$.

\subsubsection{Dynamics on one-manifolds}
When $G\le \Homeop(S^1)$ and $p\in X$ a point, we write $\Stab{G}{p}$ for the stabilizer of $p$ in $G$, and $\Fix(G)=\bigcap_{g\in G}\Fix(g)$ for the collection of fixed points of $G$.

Now, let $X$ be $\RR$ or $S^1$. Recall that $\RR$ has a standard linear order $<$, and $S^1$ is equipped with the circular order $c$. We denote the order-preserving homeomorphism group of $X$ by $\Homeop(X)$. A \emph{monotone map} $m$ on $X$ is a continuous surjective map satisfying the following:
\begin{itemize}
    \item the preimage of any point is connected,
    \item if $X=\RR$, $m$ is non-decreasing;
    \item if $X=S^1$, for any $(x,y,z)$ with $c(x,y,z)=1$, one has
    $c(m(x),m(y),m(z))\geq 0$. 
\end{itemize}
Let $G$ and $H$ be subgroups of $\Homeop(X)$. We say that $G$ is \emph{semi-conjugate} to $H$ if there is a monotone map $m\colon X\to X$ and a surjective homomorphism $\theta\colon G\to H$ such that $\theta(g)(x)=m(g(x))$ for any $g\in G$ and $x\in X$. Note that semi-conjugacy is not an equivalence relation.
For $m$ as above, we denote by $\Gap(m) \subset X$ the open subset of points at which $m$ is locally constant. We call each component of $\Gap(m)$ a \emph{gap} for $m$. Note that $m$ is a homeomorphism if and only if $\Gap(m)=\emptyset$. Finally, we define the \emph{core} of $m$ as the complement $\Core(m)=X \setminus \Gap(m)$.

\subsection{Results}

Let $G$ be a hyperbolic-like group. As mentioned in \refsec{intro}, point-stabilizers for the $G$-action are elementary subgroups, and hence isomorphic to subgroups of $(\RR,+)$. We say that a point $p$ in $S^1$ is \emph{non-simple} (for $G$) if its stabilizer is neither trivial nor infinite cyclic. Note that when $\Stab{G}{p}$, there exists a unique point $\overline p\in S^1\setminus \{p\}$ such that $\Stab{G}{p}=\Stab{G}{\overline p}$; see \refthm{elementary}. We call $\overline{p}$ the \emph{companion point} of $p$.
We will consider hyperbolic-like groups for which there are non-simple points $p,q\in S^1$ such that $p\notin \{q,\overline{q}\}$. Under this assumption, such a group is necessarily non-elementary (see \refcor{commute}). We will say that such non-simple points $p$ and $q$ are \emph{linked} if the pairs $\{p,\overline{p}\}$ and $\{q,\overline{q}\}$ are linked, and \emph{unlinked} otherwise.

Our main results provide explicit proper ping-pong partitions for pairs of stabilizers of non-simple points in a hyperbolic-like groups. In the linked case, we see that the dynamics can be described by the most natural ping-pong partition.

\begin{thmA}[linked case]\label{Thm1:linked}
Let $G$ be a hyperbolic-like group. If $p$ and $q$ are linked non-simple points, then there are disjoint open intervals $J_p$, $J_{\overline{p}}$, $J_q$ and $J_{\overline{q}}$ in $S^1$ with $\alpha\in J_\alpha$ for $\alpha\in \{p, \overline{p}, q, \overline{q}\}$, and such that $(J_p\cup J_{\overline p},J_q\cup J_{\overline q})$ is a proper ping-pong partition for $\Stab{G}{p}$ and $\Stab{G}{q}$:
\[(\Stab{G}{p}\setminus \{
\id\})(J_{q}\cup J_{\overline{q}})\subset J_p\cup J_{\overline{p}}\quad \text{and}\quad (\Stab{G}{q}\setminus \{
\id\})(J_{p}\cup J_{\overline{p}})\subset J_q\cup J_{\overline{q}}.\]
See \reffig{possiblePingPong} (left).
\end{thmA}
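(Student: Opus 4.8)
The plan is to realize the four intervals as the complementary arcs of two wandering sets and to reduce both ping-pong inclusions to a single covering condition. Write $H=\Stab{G}{p}=\Stab{G}{\overline p}$ and $K=\Stab{G}{q}=\Stab{G}{\overline q}$; by \refthm{elementary} both are non-cyclic subgroups of $(\RR,+)$, and every non-trivial $h\in H$ satisfies $\Fix(h)=\{p,\overline p\}$, so $H$ acts freely on each of the arcs $A_1=\opi{p}{\overline p}$ and $A_2=\opi{\overline p}{p}$, and symmetrically for $K$. In the linked configuration $p,q,\overline p,\overline q$ occur in this cyclic order, so $q\in A_1$, $\overline q\in A_2$, while $p,\overline p$ lie in the two arcs of $K$. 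I would first isolate a purely combinatorial engine: suppose there are open intervals $Y_q\ni q$, $Y_{\overline q}\ni\overline q$, $Y_p\ni p$, $Y_{\overline p}\ni\overline p$ with $Y_q\cup Y_{\overline q}$ wandering for $H$, $Y_p\cup Y_{\overline p}$ wandering for $K$, and such that the four intervals cover $S^1$. Setting $J_p\cup J_{\overline p}:=S^1\setminus\overline{Y_q\cup Y_{\overline q}}$ and $J_q\cup J_{\overline q}:=S^1\setminus\overline{Y_p\cup Y_{\overline p}}$ then proves the theorem. Indeed, the covering makes these two sets disjoint (their intersection lies in the complement of $Y_p\cup Y_q\cup Y_{\overline p}\cup Y_{\overline q}=S^1$) and splits each into two intervals around $\{p,\overline p\}$ and $\{q,\overline q\}$; moreover the covering gives $J_q\cup J_{\overline q}\subseteq Y_q\cup Y_{\overline q}$, so each non-trivial $h$ carries the open set $J_q\cup J_{\overline q}$ off the wandering set $Y_q\cup Y_{\overline q}$ and hence into $J_p\cup J_{\overline p}$; the reverse inclusion is symmetric.

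The content is thus to produce the four wandering intervals. Since $H$ is non-cyclic abelian and acts freely on $A_1$, Hölder's theorem yields a monotone semi-conjugacy $m_1\colon A_1\to\RR$ onto a dense group of translations, whose gaps $\Gap(m_1)$ are exactly the maximal $H$-wandering intervals of $A_1$ and are permuted freely by $H$. Local discreteness (\refprop{locDiscrete}) excludes $\Gap(m_1)=\emptyset$: otherwise $m_1$ is a conjugacy and arbitrarily small translations give non-trivial elements that are $C^0$-close to $\id$ on compact subarcs. Hence $\Gap(m_1)$ is a dense union of wandering intervals, and likewise on $A_2$ and for $K$. The first dynamical assertion is that the non-simple points lie in the interiors of the opposite gaps, i.e.\ $q\in\Gap(m_1)$ (and symmetrically $\overline q$, and $p,\overline p$ for $K$). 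I would argue by contradiction: if $q\in\Core(m_1)$, its $H$-orbit accumulates at $q$, and, using that $q$ is fixed by the non-cyclic $K$, I would combine near-returns of $H$ with elements of $K$ to manufacture non-trivial elements of $G$ (non-trivial by \refcor{commute}, as $H$ and $K$ share no fixed point) that are $C^0$-small on a fixed subarc, once more contradicting \refprop{locDiscrete}. Taking $Y_q$ to be the gap containing $q$, and so on, then supplies the wandering intervals.

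The genuinely delicate step — the one I expect to be the main obstacle — is the covering condition: that the $H$-gap $Y_q$ around $q$ and the $K$-gap $Y_p$ around $p$ overlap, and likewise for the three other consecutive pairs, so that the four gaps exhaust $S^1$. Equivalently, I must exclude a point $r\in\opi{p}{q}$ that is recurrent for $H$ (in $\Core(m_1)$) yet lies outside $Y_p$ on the $K$-side. Here linking is essential: such an $r$ produces an $H$-orbit clustering toward $\{p,\overline p\}$ and a $K$-orbit clustering toward $\{q,\overline q\}$ which interlace along $\opi{p}{q}$, and playing a suitable $h\in H$ against a suitable $k\in K$ should force the hyperbolic-like element $hk$ (or a commutator) to fix a third point of $\opi{p}{q}$, contradicting that every non-trivial element of $G$ has exactly two fixed points. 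Controlling the mutual position of the two invariant cores purely from the two-fixed-point axiom and the linking is where I expect the most care to be required; once the overlaps are established, the engine of the first paragraph delivers the partition of \reffig{possiblePingPong}(left).
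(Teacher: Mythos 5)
Your first paragraph is sound, and it is essentially the paper's own final assembly step: \refcor{pingPongLinked} defines $U_p=(I_p\cup I_{\overline p})\setminus\overline{I_q\cup I_{\overline q}}$ and $U_q$ symmetrically from four gaps $I_\alpha\ni\alpha$ whose closures cover $S^1$, which is exactly your ``engine''. The problem is that the two facts you defer --- (a) that $q,\overline q$ lie \emph{inside} gaps of $p$ and $p,\overline p$ inside gaps of $q$, and (b) that these four gaps cover the circle --- constitute the entire content of the theorem, and the mechanisms you sketch for them do not go through as stated. For (a), \refprop{gapFromLimitSet} only guarantees that each side of $p$ contains \emph{some} gap, not that $q$ avoids $\Core(p)$. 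Your plan is to turn near-returns $h_n(q)\to q$ (for $h_n\in\Stab{G}{p}$) plus elements of $\Stab{G}{q}$ into non-trivial elements that are $C^0$-small on a fixed subarc. But controlling an element at the single point $q$ gives no control on an interval, and no explicit combination is exhibited; $q\in\Core(p)$ does not contradict local discreteness in any direct way. For (b), ``playing $h$ against $k$ should force $hk$ or a commutator to fix a third point'' is precisely what one cannot do in one stroke: since every non-trivial element has exactly two fixed points by hypothesis, the contradiction must instead come from locating the \emph{two} fixed points of specific products and commutators in mutually incompatible regions.

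That localization is where all the work of the paper lies, and your proposal contains no substitute for it: the classification of fixed-point configurations of products (\refprop{configFix}, Table~\ref{Table:fixedPointsComposition}) and of the four commutators $[f^{\pm1},h^{\pm1}]$ (\refprop{fixCommutator}); the notion of geometric linkedness; and the analytic core of \refthm{linkedCase} (the Main Claim, and the strict monotonicity of the comparison function $\varphi(z)=\hat f(z)-z$), which is what rules out the non-geometric commutator configurations. Note also that the paper's logical order is the reverse of yours: both (a) and (b) are deduced \emph{from} geometric linkedness --- the uniform inequalities $\overline p<\sup_{h\in H_+(p)}h(\overline q)\le\inf_{f\in H_+(q)}f(\overline p)<\overline q$ isolate $\overline p$ in its $\Stab{G}{q}$-orbit, which simultaneously places $\overline p$ in a gap of $q$ and forces the adjacent gaps to overlap. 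So the gaps and the covering come out of the commutator analysis, not before it; as written, your proof has a genuine gap at both of the steps you flag as delicate.
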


This result is proved in \refsec{linked}; we refer to \refcor{pingPongLinked} for a more precise statement. The unlinked case is discussed in \refsec{unlinked}. As a summary of the results of this section, we find three kinds of ping-pong partitions that can describe the dynamics.

\begin{thmA}[unlinked case]\label{Thm1:unlinked}
Let $G$ be a hyperbolic-like group. 
If $p$ and $q$ are unlinked non-simple points, then one of the following cases holds.
\begin{enumerate}
    \item{\emph{(geometric case)}} There are two disjoint intervals $U_p$ and $U_q$ in $S^1$ with $\{p,\overline{p}\}\subset U_p$, $\{q,\overline{q}\}\subset U_q$, and such that $(U_p,U_q)$ is a proper ping-pong partition for $\Stab{G}{p}$ and $\Stab{G}{q}$:
    \[(\Stab{G}{p}\setminus \{
\id\})(U_q)\subset U_p\quad\text{and} \quad(\Stab{G}{q}\setminus \{
\id\})(U_p)\subset U_q.\]
See \reffig{possiblePingPong} (center).
    
\item{\emph{(non-geometric case 1)}}\label{Itm:nonGeom1} There are disjoint open intervals $U$, $V$, $J_q$ and $J_{\overline{q}}$ in $S^1$ with $\{p,\overline{p}\}\subset U$, $\alpha\in J_\alpha$ for $\alpha\in \{q,\overline{q}\}$ and such that $(U\cup V,J_q\cup J_{\overline q})$ is a proper ping-pong partition for $\Stab{G}{p}$ and $\Stab{G}{q}$:
\[(\Stab{G}{p}\setminus \{
\id\})(J_q \cup J_{\overline {q}})\subset U\cup V \quad\text{and}\quad(\Stab{G}{q}\setminus \{
\id\})(U\cup V)\subset J_q\cup J_{\overline{q}}.\]
See \reffig{possiblePingPong} (right).
\item \emph{(non-geometric case 2)}\label{Itm:nonGeom2} There are disjoint open intervals $U$, $V$, $J_p$ and $J_{\overline{p}}$ in $S^1$ with $\{q,\overline{q}\}\subset U$, $\alpha\in J_\alpha$ for $\alpha\in \{p,\overline{p}\}$ and such that $(J_p\cup J_{\overline p},U\cup V)$ is a proper ping-pong partition for $\Stab{G}{p}$ and $\Stab{G}{q}$:
\[(\Stab{G}{p}\setminus \{
\id\})(U \cup V)\subset J_p\cup J_{\overline p} \quad\text{and}\quad(\Stab{G}{q}\setminus \{
\id\})(J_p\cup J_{\overline p})\subset U\cup V.\] 
\end{enumerate}
Moreover, if $p$ and $q$ lie in the same orbit, the non-geometric cases cannot occur.
\end{thmA}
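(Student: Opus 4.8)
The plan is to reduce the statement to a transport-of-structure argument, exploiting that points in the same orbit have conjugate stabilizers with matching companions. Suppose $p$ and $q$ lie in the same $G$-orbit, say $q=g(p)$ for some $g\in G\setminus\{\id\}$, and write $H=\Stab{G}{p}$ and $K=\Stab{G}{q}$. Since the companion point is intrinsically determined by the stabilizer (\refthm{elementary}), the chain $\Stab{G}{g(\overline p)}=g\,\Stab{G}{p}\,g^{-1}=\Stab{G}{q}=\Stab{G}{\overline q}$ together with $g(\overline p)\neq g(p)=q$ forces
\[g(\overline p)=\overline q,\qquad K=gHg^{-1}.\]
Moreover $g$ is orientation-preserving, so it carries the attracting/repelling labelling of the fixed points of $H$ to that of $K$: fixing the orientation so that the positive elements of $H$ attract towards $\overline p$, the elements $ghg^{-1}$ with $h$ positive attract towards $g(\overline p)=\overline q$. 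Thus $g$ maps the entire local dynamical picture of $(H;p,\overline p)$ onto that of $(K;q,\overline q)$.

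First I would isolate, from the analysis of \refsec{unlinked}, the intrinsic attribute of a non-simple pair that selects which of the three cases occurs. The case descriptions above are asymmetric in exactly one respect: in the geometric case each of the two pairs $\{p,\overline p\}$ and $\{q,\overline q\}$ is enclosed in a single interval, whereas in the non-geometric cases one pair is enclosed in a single interval while the other must be split, with its companion point separated into a distinct component. My contention is that this \emph{enclosability} of a pair is not an artefact of the chosen partition but an intrinsic, conjugacy-invariant property of the pair, read off from how its two points sit inside the relevant $G$-invariant closed set (namely, whether each point is accumulated on one or on two sides, in the sense of the preliminaries). Concretely I would show that $\{p,\overline p\}$ is enclosable if and only if a certain accumulation type holds at $p$ and $\overline p$, that the analogous statement holds for $\{q,\overline q\}$, and that the geometric case (resp. non-geometric case \refitm{nonGeom1}, case \refitm{nonGeom2}) corresponds precisely to both pairs being enclosable (resp. only $\{p,\overline p\}$, only $\{q,\overline q\}$).

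The conclusion is then immediate from transport of structure. The closed set governing the accumulation type is $G$-invariant, and $g\in G$ maps it to itself while sending $p\mapsto q$ and $\overline p\mapsto\overline q$; hence the accumulation type at $p$ (resp. $\overline p$) coincides with that at $q$ (resp. $\overline q$). Consequently $\{p,\overline p\}$ is enclosable if and only if $\{q,\overline q\}$ is, so the two pairs share the same enclosability type. This excludes the two asymmetric configurations, that is, both non-geometric cases, and leaves only the geometric case.

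The hard part will be the middle step: proving that the constructive trichotomy of \refsec{unlinked} is genuinely governed by an intrinsic, $g$-invariant attribute of each pair, rather than merely by the partition one happens to build. This requires matching the three explicit partition shapes to an accumulation (or gap) dichotomy at the fixed points, and then checking that this dichotomy is preserved by any orientation-preserving element of $G$ carrying one pair to the other. One must be careful here that $g$ need not fix any of the four points $p,\overline p,q,\overline q$, so the comparison cannot be made through a single symmetry of the joint four-point configuration; instead it must proceed through the intrinsic type of each pair separately, which is exactly what the $G$-invariance of the governing closed set makes legitimate.
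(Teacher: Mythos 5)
Your proposal only engages seriously with the ``moreover'' clause (it takes the trichotomy itself as given from the section's analysis), and for that clause the transport-of-structure argument has a genuine gap at exactly the step you flag as ``the hard part''. The attribute that actually governs which of the three cases occurs is not intrinsic to a single pair: it is the \emph{relational} condition of whether $\Core(q)$ meets $L(p)$ and whether $\Core(p)$ meets $L(q)$ (this is the dichotomy of \reflem{noHexagon} and \refthm{unlinkedCase}). The set $\Core(q)$ is invariant only under $\Stab{G}{q}$, not under $G$, so conjugating by an element $g$ with $g(p)=q$ transports the configuration of the pair $(p,q)$ to that of the pair $(q,g(q))$ --- not to that of $(q,p)$. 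The symmetry therefore does not close up, which is precisely the difficulty you acknowledge but do not resolve. Your proposed fix --- reading off an ``enclosability type'' from the accumulation behaviour of $p$ and $\overline p$ in a $G$-invariant closed set such as $\Lambda(G)$ --- does not work: one can check that $\Core(q)\cap R(q)\subset \Lambda(G)$, so $\Lambda(G)\cap L(p)=\varnothing$ does force $\Core(q)\cap L(p)=\varnothing$, but the converse fails, since $\Lambda(G)$ may well accumulate on $p$ from both sides while the much sparser set $\Core(q)$ leaves $\overline{L(p)}$ inside a single gap. So the biconditional your argument needs (enclosability $\Leftrightarrow$ accumulation type in a $G$-invariant set) is not available, and no $G$-invariant closed set is in sight that would detect the gaps of the individual cores.

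By contrast, the paper proves the ``moreover'' clause by a direct dynamical argument (\refprop{conjFrameExist}): assuming the non-geometric configuration together with $g(p)\in\{q,\overline q\}$, it uses the gap $L_p$ guaranteed by \refthm{unlinkedCase} and the non-simplicity of $q$ to manufacture an element ($fg$, or $(fg)^2$) whose attracting or repelling fixed points are forced into two disjoint intervals, contradicting the hyperbolic-like hypothesis via the fixed-point configuration analysis of the appendix. If you want to salvage your approach, you would have to prove that in the same-orbit situation the relational condition $\Core(q)\cap L(p)=\varnothing$ is equivalent to its mirror $\Core(p)\cap L(q)=\varnothing$, and that proof is essentially the content of \refprop{conjFrameExist}; the conjugation identity $\Stab{G}{q}=g\Stab{G}{p}g^{-1}$, $g(\overline p)=\overline q$ that you establish is correct and is indeed used there, but only as the starting point of the contradiction, not as a formal symmetry. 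Separately, a complete proof of the theorem would still need the existence of the three partitions in the unlinked case (\refprop{outsideConfig} through \refcor{parallelPingPong}), which your proposal does not address.
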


The ping-pong partition in the geometric case is very natural, whereas we do not actually know whether the non-geometric case can occur.

\begin{figure}[ht]
        \centering
        \includegraphics[width=0.3\linewidth]{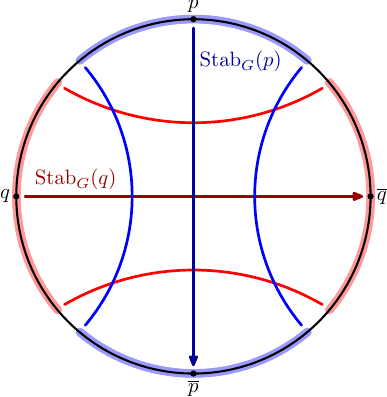}
        \includegraphics[width=0.3\linewidth]{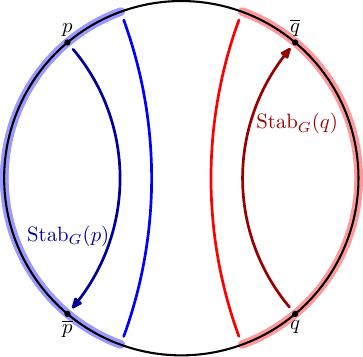}
\includegraphics[width=0.3\linewidth]{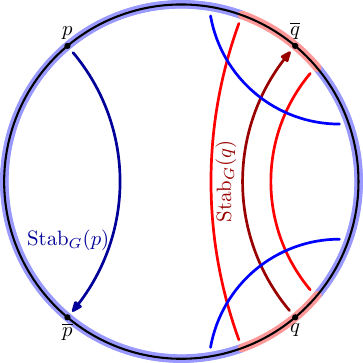}
        \caption{Possible configurations of gaps and associated ping-pong partitions for the linked case (left) and the unlinked cases. The red and blue chords represent gaps of $\Stab{G}{q}$ and of $\Stab{G}{p}$, respectively. The highlighted intervals describe the associated ping-pong partitions.}
        \label{Fig:possiblePingPong}
\end{figure}

\subsection{Overall strategy}
The first important observation, which is a consequence of \cite{BonattiCarnevaleTriestino24}, is that the stabilizer of a non-simple point $p$ cannot act minimally on both connected components of $S^1\setminus\{p,\overline p\}$ (\refprop{gapFromLimitSet}). The wandering intervals in these components are called the \emph{gaps} of $p$. To prove of our main results, we analyze the possible configurations of gaps of given non-simple points $p$ and $q$ (with $q\notin\{p,\overline p\}$) to find a ping-pong partition. In a hyperbolic-like group $G$, the fixed points of compositions and commutators of elements in the stabilizers $\Stab{G}{q}$ and $\Stab{G}{p}$ need to satisfy many combinatorial restrictions (these quite technical results are discussed in the appendix), and this is used to get  restrictions on the possible arrangements of gaps of $p$ and $q$. The analysis for the linked case is used in the discussion of the unlinked case in order to reduce the number of cases to consider.

\section{Basic dynamics of hyperbolic-like groups}
\subsection{Elementary hyperbolic-like groups}

A subgroup $H$ of $\Homeop(S^1)$ is \emph{elementary} if there is an $H$-invariant Borel probability measure on $S^1$. Equivalently, $H$ is elementary if it either admits a finite orbit, or is semi-conjugate to a subgroup of $\SO$ (see Ghys \cite{Ghys01}*{Proposition 6.17}). For hyperbolic-like groups, the situation is much more restrictive: any elementary hyperbolic-like group is abelian, and one can give a precise description of the possible dynamics, relying on a classical result of Hölder. The following statement can be deduced from the pioneering work of Kova\v{c}evi\'{c} \cite{Kovacevic1}, and we refer to the work of Bonatti, Carnevale, and the second author \cite{BonattiCarnevaleTriestino24}*{proof of Theorem A} for a proof in a more modern language.

\begin{thm}\label{Thm:elementary}
Any elementary hyperbolic-like group $H$ fixes exactly two points: there exist two distinct points $p,\overline{p}\in S^1$ such that $\Fix(H)=\{p,\overline p\}$. In addition, the following holds: write $R(p)=\opi{p}{\overline p}$ and $L(p)=\opi{\overline p}{p}$, and choose the linear orders on both intervals such that the directions from $p$ to $\overline p$ are the increasing directions; then for any $I\in \{R(p),L(p)\}$, there exist a monotone map
$m_{p,I}\colon I\to \RR$, and an injective homomorphism
$\theta_{p,I}\colon H\to (\RR,+)$, unique up to positive rescaling,
such that
\[
m_{p,I}(g(x))=m_{p,I}(x)+\theta_{p,I}(g)\quad \text{for any $x\in I$ and $g\in H$}.
\]
Moreover, the morphisms $\theta_{p,R(p)}$ and $\theta_{p,L(p)}$ are positively proportional.
\end{thm}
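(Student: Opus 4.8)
The plan is to prove the two assertions separately: first that $\Fix(H)$ consists of exactly two points, and then, on each of the two resulting intervals, to extract the semi-conjugacy to translations from Hölder's theorem; the proportionality statement is then read off the hyperbolic-like dynamics by a sign computation.

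First I would use elementarity to locate the fixed points. Let $\mu$ be an $H$-invariant Borel probability measure and fix any non-trivial $g\in H$. Since $g$ is hyperbolic-like, any $g$-invariant measure is supported on $\Fix(g)=\{a_g,r_g\}$ (the usual north--south dynamics), so in particular $\operatorname{supp}\mu\subseteq\{a_g,r_g\}$. As $\mu$ is invariant under every element, $\operatorname{supp}\mu\subseteq\Fix(h)$ for every non-trivial $h$, whence $\emptyset\ne\operatorname{supp}\mu\subseteq\Fix(H)$ and, since $|\Fix(h)|=2$, we get $|\Fix(H)|\le 2$. If $\operatorname{supp}\mu$ has two points we are done: those are common fixed points, and the hyperbolic-like bound forces $\Fix(h)=\Fix(H)$ for all $h$. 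The remaining case is a single global fixed point $p$; cutting the circle at $p$ yields an action of $H$ on $\RR\cong S^1\setminus\{p\}$ in which every non-trivial element has exactly one fixed point. Here I would invoke Solodov's theorem: such an action is semi-conjugate to a subgroup $G'\le\Aff$. A non-trivial translation in $G'$ would pull back to a fixed-point-free non-trivial element of $H$ on the line, which is impossible, so $G'$ has no non-trivial translation; a short commutator computation in $\Aff$ then shows any such subgroup is abelian with a common fixed point $z_0$, and the boundary of the $H$-invariant set $m^{-1}(z_0)$ supplies a second global fixed point $\overline p$. In all cases $\Fix(H)=\{p,\overline p\}$.

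Next I would produce the semi-conjugacies. Since $H$ fixes both $p$ and $\overline p$, it preserves each component $R(p)=\opi{p}{\overline p}$ and $L(p)=\opi{\overline p}{p}$ of $S^1\setminus\{p,\overline p\}$. On either interval $I$, a non-trivial element has no interior fixed point (its only fixed points are the endpoints $p,\overline p$), so $H$ acts freely on $I\cong\RR$ by orientation-preserving homeomorphisms, and Hölder's theorem applies verbatim: it yields an injective homomorphism $\theta_{p,I}\colon H\to(\RR,+)$, unique up to positive rescaling, together with the monotone map $m_{p,I}\colon I\to\RR$ (the translation-number cocycle) satisfying $m_{p,I}(g(x))=m_{p,I}(x)+\theta_{p,I}(g)$. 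In particular $H$ is abelian and embeds in $(\RR,+)$. For the proportionality, orient both $R(p)$ and $L(p)$ so that the direction from $p$ to $\overline p$ is increasing. For non-trivial $g$ with $\{a_g,r_g\}=\{p,\overline p\}$, forward iteration moves every interior point of both intervals toward the attracting endpoint; if $a_g=\overline p$ this is the increasing direction and $\theta_{p,R(p)}(g),\theta_{p,L(p)}(g)$ are both positive, while if $a_g=p$ both are negative. Thus the two injective homomorphisms induce the same sign, equivalently the same total order, on $H$. Fixing $h_0$ with $\theta_{p,R(p)}(h_0)>0$, for every $g$ and all integers $m,n$ with $n>0$ the sign of $\theta_{p,R(p)}(g^m h_0^{-n})$ depends only on this order and hence agrees with that of $\theta_{p,L(p)}(g^m h_0^{-n})$; by archimedeanity this forces $\theta_{p,R(p)}(g)/\theta_{p,R(p)}(h_0)=\theta_{p,L(p)}(g)/\theta_{p,L(p)}(h_0)$, so $\theta_{p,L(p)}=\lambda\,\theta_{p,R(p)}$ with $\lambda=\theta_{p,L(p)}(h_0)/\theta_{p,R(p)}(h_0)>0$.

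The main obstacle is the first step, and specifically ruling out a single global fixed point: this is exactly where the hyperbolic-like hypothesis must be combined with the structure theory of Hölder and Solodov, and where the reduction to $\Aff$ together with the commutator argument does the real work. Once two common fixed points are secured, the semi-conjugacies come directly from Hölder's theorem and the proportionality is the elementary bookkeeping of translation signs described above.
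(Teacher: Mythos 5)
Your argument is correct and takes exactly the route the paper intends: the paper does not prove this theorem itself but defers to Kovačević and to the Hölder--Solodov circle of ideas (via the cited proof of Theorem A in the Bonatti--Carnevale--Triestino paper), and your proof is precisely that combination --- the invariant measure forces $\operatorname{supp}\mu\subseteq\Fix(H)$, Solodov's theorem on the line cut at $p$ rules out a single global fixed point, and Hölder's theorem on each complementary interval produces the translation representations, whose positive proportionality follows from your sign/Dedekind-cut bookkeeping. The only cosmetic omission is that Solodov's theorem is usually stated for actions on $\RR$ without a global fixed point, but in that excluded sub-case a global fixed point on the line already supplies the second fixed point $\overline p$ directly, so nothing is lost.
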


Let us point out the following straightforward consequence.

\begin{cor}\label{Cor:commute}
Let $G$ be a hyperbolic-like group. For any $f,g$ in $G\setminus \{\id\}$, the following conditions are equivalent:
\begin{enumerate}[label=(\arabic*)]
    \item the subgroup generated by $f$ and $g$ is elementary,
    \item $\Fix(f)\cap \Fix(g)\neq \emptyset$,
    \item $\Fix(f)=\Fix(g)$,
    \item $f$ and $g$ commute.
\end{enumerate}
\end{cor}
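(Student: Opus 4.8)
The plan is to prove the four conditions equivalent through the cyclic chain $(2)\Rightarrow(1)\Rightarrow(3)\Rightarrow(4)\Rightarrow(2)$, using \refthm{elementary} as the main engine. The key observation, and the one that makes the whole statement nearly immediate, is that condition (2) produces an invariant measure for free: if $f$ and $g$ share a fixed point $p$, then the subgroup $H=\langle f,g\rangle$ fixes $p$, so the Dirac mass $\delta_p$ is an $H$-invariant Borel probability measure and $H$ is elementary by definition. Since being hyperbolic-like is inherited by subgroups and $H\neq\{\id\}$ (as $f\ne\id$), the group $H$ is in fact a non-trivial elementary hyperbolic-like group. This establishes $(2)\Rightarrow(1)$.

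For $(1)\Rightarrow(3)$ I would apply \refthm{elementary} to $H=\langle f,g\rangle$: it fixes exactly two points, say $\Fix(H)=\{p,\overline p\}$. Both points are then fixed by $f$; but $f$, being hyperbolic-like, has exactly two fixed points, so $\Fix(f)=\{p,\overline p\}=\Fix(H)$, and likewise $\Fix(g)=\{p,\overline p\}$, whence $\Fix(f)=\Fix(g)$. For $(3)\Rightarrow(4)$, write $\Fix(f)=\Fix(g)=\{p,\overline p\}$; then $H$ fixes both $p$ and $\overline p$, so $H$ is again elementary and hyperbolic-like, and \refthm{elementary} provides an \emph{injective} homomorphism $\theta_{p,R(p)}\colon H\to(\RR,+)$. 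As $(\RR,+)$ is abelian and $\theta_{p,R(p)}$ is injective, $H$ is abelian; in particular $fg=gf$, which is (4).

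It remains to close the loop with $(4)\Rightarrow(2)$, which is the only genuinely dynamical step and the one I expect to require a little care. Assuming $fg=gf$, the relation $gfg^{-1}=f$ shows that $g$ maps $\Fix(f)$ onto $\Fix(gfg^{-1})=\Fix(f)$; moreover, conjugation by $g$ carries the attracting fixed point of $f$ to the attracting fixed point of $gfg^{-1}=f$. Since $f$ is hyperbolic-like it has a unique attracting fixed point $a$, so $g(a)=a$ and $a\in\Fix(f)\cap\Fix(g)$, giving (2). The point to handle carefully here is exactly this use of the attracting/repelling dichotomy: a priori $g$ could permute the two-point set $\Fix(f)$, and what rules out the swap is precisely that conjugation by $g$ preserves the topological type of each fixed point. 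With the cycle complete, conditions $(1)$--$(4)$ are all equivalent, and I note that the entire argument is elementary once \refthm{elementary} is granted, the only non-formal content being the fixed-point-type argument in the last implication.
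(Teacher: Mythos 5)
Your proof is correct and follows exactly the route the paper intends: the corollary is stated there as a ``straightforward consequence'' of \refthm{elementary} with no written proof, and your cyclic chain (invariant Dirac mass for $(2)\Rightarrow(1)$, the two-point fixed set and the injective homomorphism to $(\RR,+)$ for $(1)\Rightarrow(3)\Rightarrow(4)$, and preservation of the attracting/repelling type under conjugation for $(4)\Rightarrow(2)$) is precisely the argument being left to the reader. Nothing to correct.
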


\subsection{Non-elementary hyperbolic-like groups}

When a subgroup $G$ of $\Homeop(S^1)$ is non-elementary, there exists a unique non-empty closed $G$-invariant subset $\Lambda(G)$ of $S^1$ that is minimal with respect to these properties, and it is either the whole circle $S^1$ or a Cantor set (see Ghys \cite{Ghys01}*{Proposition~5.6}). Here, we refer to $\Lambda(G)$ as the \emph{limit set} of $G$.
We say that a non-empty open interval $I$ in $S^1$ is \emph{wandering} (under the $G$-action) if $I\cap \Lambda(G)= \emptyset$. We also call each component of $S^1\setminus \Lambda(G)$ a \emph{gap} of $\Lambda(G)$. Obviously, a gap of $\Lambda(G)$ is wandering, and every wandering interval is contained in some gap.  Note that when $\Lambda(G)=S^1$, every non-empty open interval is non-wandering.

For hyperbolic-like groups, the following result characterizes the limit set as the closure of the set of fixed points.

\begin{lem}\label{Lem:limitSet}
Let $G$ be a non-elementary hyperbolic-like group. Then, $\Lambda(G)$ is the closure of the subset
\[\bigcup_{g\in G\setminus \{\id\}}\Fix(g).\]
Therefore, a non-empty open interval is non-wandering if and only if it contains a fixed point of some non-trivial element of $G$.
\end{lem}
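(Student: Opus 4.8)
Write $F=\bigcup_{g\in G\setminus\{\id\}}\Fix(g)$; the goal is to prove $\Lambda(G)=\overline{F}$, and then to read off the characterization of non-wandering intervals. The plan is to establish the two inclusions separately.

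For $\Lambda(G)\subseteq\overline{F}$, I would first check that $\overline{F}$ is a non-empty closed $G$-invariant set and then invoke the uniqueness of the minimal set. Non-emptiness is clear: $G$ is non-trivial (being non-elementary), so it contains a hyperbolic-like element, whose two fixed points lie in $F$. Invariance of $F$ follows from $h\,\Fix(g)=\Fix(hgh^{-1})$ for every $h\in G$ and $g\in G\setminus\{\id\}$, since $hgh^{-1}$ is again non-trivial; taking closures gives invariance of $\overline{F}$. Finally, for any $x\in\overline{F}$ the orbit closure $\overline{Gx}$ is a non-empty closed invariant subset of $\overline{F}$, and it contains a minimal such subset by compactness; by uniqueness this minimal subset is $\Lambda(G)$, whence $\Lambda(G)\subseteq\overline{Gx}\subseteq\overline{F}$.

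For the reverse inclusion $\overline{F}\subseteq\Lambda(G)$, since $\Lambda(G)$ is closed it suffices to show $F\subseteq\Lambda(G)$, i.e.\ that both fixed points of every non-trivial $g$ lie in $\Lambda(G)$. Here I would use the north-south dynamics of a hyperbolic-like element: if $a$ is the attracting and $r$ the repelling fixed point of $g$, then on each of the two arcs of $S^1\setminus\{a,r\}$ the map $g$ has no fixed point and hence moves points monotonically toward $a$, so that $g^n(z)\to a$ for every $z\neq r$ and $g^{-n}(z)\to r$ for every $z\neq a$. Because $\Lambda(G)$ is infinite (it is either $S^1$ or a Cantor set), I can pick $z\in\Lambda(G)\setminus\{r\}$ and $z'\in\Lambda(G)\setminus\{a\}$; invariance and closedness of $\Lambda(G)$ then force $a=\lim_n g^n(z)\in\Lambda(G)$ and $r=\lim_n g^{-n}(z')\in\Lambda(G)$. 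Thus $\Fix(g)\subseteq\Lambda(G)$, and taking the union over all non-trivial $g$ gives $F\subseteq\Lambda(G)$.

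Combining the two inclusions yields $\Lambda(G)=\overline{F}$. The final assertion then follows quickly: a non-empty open interval $I$ is wandering precisely when $I\cap\Lambda(G)=\emptyset$, i.e.\ $I\cap\overline{F}=\emptyset$; and since $I$ is open, $I$ meets $\overline{F}$ if and only if it meets $F$, that is, if and only if it contains a fixed point of some non-trivial element of $G$. The only genuinely delicate step is the dynamical claim that a hyperbolic-like element has global north-south dynamics; I expect this to be the main point to justify carefully, although it is essentially immediate from the definition once one observes that the two arcs determined by the fixed points are $g$-invariant and fixed-point-free.
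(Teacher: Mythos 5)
Your proof is correct, and for the inclusion $\Lambda(G)\subseteq\overline{F}$ it coincides with the paper's argument: invariance of $F$ via $h(\Fix(g))=\Fix(hgh^{-1})$ followed by minimality/uniqueness of $\Lambda(G)$. For the other inclusion $F\subseteq\Lambda(G)$ you take a mildly different route. The paper argues by contradiction inside a gap: if a fixed point $x$ of $g$ lay in a gap $\opi{y}{z}$ of $\Lambda(G)$, then (after normalizing so that $x$ is attracting and the second fixed point avoids the relevant subarc) the image of the gap's endpoint $y$ would be pushed into the gap while remaining in $\Lambda(G)$. You instead use global north--south dynamics: since $\Lambda(G)$ is infinite, closed and invariant, iterating $g$ on a point of $\Lambda(G)\setminus\{r(g)\}$ forces $a(g)\in\Lambda(G)$, and symmetrically for $r(g)$. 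Both arguments rest on the same two facts --- that $g$ has no fixed points on the arcs complementary to $\Fix(g)$, and that $\Lambda(G)$ is closed and invariant --- but yours avoids the case analysis about where the second fixed point of $g$ sits relative to the gap, at the cost of having to justify (as you correctly note) that "attracting/repelling" in the definition of hyperbolic-like yields genuine north--south dynamics; this is indeed immediate from the fixed-point count and the exclusion of semi-stable fixed points. The deduction of the final assertion about non-wandering intervals is the same in both versions.
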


\begin{proof}
   Set $\Lambda_0=\bigcup_{g\in G\setminus \{\id\}}\Fix(g)$. Let us first prove that $\Lambda_0\subset \Lambda(G)$. For this, assume by contradiction that there is a point $x\in  \Lambda_0\setminus\Lambda(G)$. Let $I=\opi{y}{z}\subset S^1$ be the gap of $\Lambda(G)$ that contains $x$, and take $g\in G\setminus \{\id\}$ such that $g(x)=x$; let us assume that the other fixed point of $g$ does not belong to the interval $\opi{y}{x}$ (otherwise, we argue similarly with the interval $\opi{x}{z}$), and that $x$ is an attracting fixed point for $g$ (otherwise, we replace $g$ with $g^{-1}$). If so, on the one hand we have $g(y)\in \opi{y}{x}$, and on the other hand we also have $g(y)\in g(\Lambda(G))=\Lambda(G)$, and this yields the desired contradiction. For the other inclusion, note that for any $h,g\in G$ we have the relation $h(\Fix(g))=\Fix(hgh^{-1})$; thus $\Lambda_0$ is $G$-invariant, and so is its closure $\overline{\Lambda_0}$. By minimality, this implies $\Lambda(G)\subset \overline{\Lambda_0}$.
\end{proof}

After the work of Bonatti, Carnevale, and the second author \cite{BonattiCarnevaleTriestino24}, we have that non-elementary hyperbolic-like groups are discrete in $\Homeop(S^1)$ in a very strong sense.

\begin{defn}
Let $G$ be a subgroup of $\Homeop(S^1)$.
Given a non-empty open interval $I \subset S^1$, we say that $G$ is \emph{discrete on $I$} if the identity $\id_I$ on $I$ is isolated among the collection of restrictions $\{g{\restriction_I} : g \in G\} \subseteq C^0(I,S^1)$ with respect to the $C^0$ topology.
When $G$ is non-elementary, we say that $G$ is \emph{locally discrete} if it is discrete on each non-wandering interval for $G$. 
\end{defn}

\begin{rmk}\label{rmk:local-nonlocal-equiv}
    In fact, for non-elementary subgroups $G\le \Homeop(S^1)$, discreteness and local discreteness are equivalent; see \cite{BonattiCarnevaleTriestino24}*{Lemma 4.5}.
\end{rmk}

\begin{prop}\label{Prop:locDiscrete}
Let $G\le \Homeop(S^1)$ be a non-elementary, non-locally discrete subgroup such that any non-trivial element has at most $N$ fixed points, for some uniform $N\ge 1$. Then $G$ contains elements without fixed points.
In particular, any non-elementary hyperbolic-like subgroup of $\Homeop(S^1)$ is discrete (equivalently, locally discrete).
\end{prop}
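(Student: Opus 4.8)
The plan is to establish the main implication first and then read off the ``in particular'' statement as an immediate specialization. For the specialization: a hyperbolic-like group satisfies the hypothesis with $N=2$, and by the very definition of hyperbolic-like no non-trivial element is fixed-point-free (each has exactly one attracting and one repelling fixed point). Hence, if a non-elementary hyperbolic-like group were non-locally discrete, the first assertion would produce a non-trivial element without fixed points, a contradiction; so the group is locally discrete, and by Remark~\ref{rmk:local-nonlocal-equiv} this is equivalent to discreteness. Thus everything reduces to the first sentence.

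For the main implication I would argue towards a fixed-point-free element, assuming to the contrary that every non-trivial element of $G$ has between $1$ and $N$ fixed points. Non-local discreteness together with Remark~\ref{rmk:local-nonlocal-equiv} yields a non-wandering interval $I$ and a sequence $g_n\in G\setminus\{\id\}$ with $g_n\to \id$ uniformly on $I$. Since each $g_n$ is non-trivial with at most $N$ fixed points, $I\setminus\Fix(g_n)$ has at most $N+1$ components, on each of which $g_n$ is monotone and fixed-point-free; because $g_n\to\id$ on $I$, the displacement on these ``moving intervals'' is small but nonzero. The decisive structural input is that $G$ is non-elementary: by the classical dichotomy for minimal circle actions (see \cite{Ghys01}) the action on the minimal set $\Lambda(G)$ is expansive, so $G$ contains strongly contracting elements, and by \reflem{limitSet} the fixed points of elements are dense in $\Lambda(G)$. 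Using minimality to reposition by a conjugation (which preserves all the hypotheses) and \reflem{limitSet} to locate a fixed point inside $I$, I would fix a contracting element $h\in G$ adapted to $I$, whose high powers $h^{k}$ map the complement of a small neighborhood of the repelling fixed point into $I$.

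The heart of the argument is then to manufacture an honest fixed-point-free element out of the near-identity elements $g_n$. The idea is to take a moving interval $J_n$ of $g_n$, on which $g_n$ pushes every point monotonically in one direction, and to exploit the contraction/expansion dynamics of $h$ and of finitely many of its conjugates (whose number is controlled by compactness of $\Lambda(G)$ and minimality) in order to transport and dilate this one-directional motion, so that a suitable composition pushes \emph{every} point of $S^1$ in the same direction; such an element has nonzero rotation number and hence no fixed point. The role of the uniform bound $N$ is essential: it is preserved under taking powers and sharply constrains how the fixed-point sets of the elements involved can interact, which is what should allow one to cancel the endpoint fixed points of the moving intervals and to guarantee that the construction terminates without creating new obstructions.

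I expect this last step to be the main obstacle, for two reasons. First, we work purely in $C^0$, so there is no Szekeres/Kopell vector field available to linearize the near-identity germs, and the construction of the fixed-point-free element must be genuinely topological and combinatorial rather than analytic. Second, tracking fixed-point sets under composition is delicate: one must ensure that combining $h$, its conjugates, and a high power of $g_n$ does not silently reintroduce a fixed point somewhere on the circle, and this is exactly the place where the uniform bound $N$ and the expansivity of the non-elementary action have to be played against each other. This bookkeeping is precisely the content distilled from \cite{BonattiCarnevaleTriestino24}, which I would either invoke directly or reproduce in the present $\le N$ setting.
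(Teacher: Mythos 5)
Your treatment matches the paper's: the ``in particular'' part is exactly the straightforward specialization the paper intends (take $N=2$ and note that hyperbolic-like elements always have fixed points, then apply \refrmk{local-nonlocal-equiv}), and for the main implication the paper simply cites \cite{BonattiCarnevaleTriestino24}*{Lemma 4.6}, which is precisely what you say you would do after your (admittedly speculative) sketch. So the proposal is correct and takes essentially the same route; the extra dynamical outline you give is not needed here, since the heavy lifting is deliberately outsourced to the cited lemma.
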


\begin{proof}
    The first statement is a consequence of \cite{BonattiCarnevaleTriestino24}*{Lemma 4.6} and the second is a straightforward application.
\end{proof}

\section{Non-simple points}\label{Sec:nonsimple}

\subsection{Non-simple points and gaps}
Consider a hyperbolic-like group $G$.
When $g\in G$ is a non-trivial element, we write $a(g)\in S^1$ for its attracting fixed point, and $r(g)$ for the repelling one. Clearly, we have the relation $r(g)=a(g^{-1})$.
For each point $p$ in $S^1$, we define the subsemigroup $H_+(p)$ of $G$ as 
\[H_+(p)=\{g\in G\setminus\{\id\}: r(g)=p\}.\]
Similarly, we set $H_-(p)$ as $H_-(p)=\{h^{-1}:h\in H_+(p)\}$. It follows that $p$ is the attracting fixed point of each element of $H_-(p)$. 
After Theorem \ref{Thm:elementary}, we have that for each $p\in S^1$, the stabilizer $\Stab{G}{p}$ is the disjoint union of $H_+(p)$, $H_-(p)$, and $\{\id\}$. 
We say that a point $p\in S^1$ is \emph{simple} if $\Stab{G}{p}$ is either trivial or cyclic, and \emph{non-simple}, otherwise.
When $H_+(p)$ is non-trivial, using \refthm{elementary} again, we can find a unique point $\overline{p}$ such that $H_+(\overline{p})=H_-(p)$. We call $\overline{p}$ the \emph{companion point} of $p$.
Note that $H_+(p)=H_-(\overline{p})$ and $H_-(p)=H_+(\overline{p})$.
Keeping the same notation as in the statement of \refthm{elementary}, the intervals $R(p)=\opi{p}{\overline{p}}$ and $L(p)=\opi{\overline{p}}{p}$ are called, respectively, the \emph{right side} and the \emph{left side} of $p$; when $I=R(p)$ is the right side of $p$ (respectively, when
$I=L(p)$ is the left side of $p$), the monotone map $m_{p,I}\colon I\to \RR$ is called the \emph{right} (respectively, \emph{left}) \emph{monotone map} at $p$, and the injective homomorphism $\theta_{p,I}\colon \Stab{G}{p}\to (\RR,+)$ is called the \emph{right} (respectively, \emph{left}) \emph{representation} of $\Stab{G}{p}$ at $p$; a gap of the monotone map $m_{p,I}$ is called a \emph{right} (respectively, \emph{left}) \emph{gap} of $p$. 
Finally, a \emph{gap} of $p$ is either a right or a left gap of $p$. Note that, by the choice of the linear orders in the statement of \refthm{elementary}, the semigroup $H_+(p)$ acts on each side of $p$ as a positive translation.
\begin{conv}
    When $p$ is simple, we always set $m_{p,I}$ to be a homeomorphism.
\end{conv}

\begin{rmk}\label{Rmk:wellDefineGap}
Considering the convention above, a gap of $p$ exists only if $p$ is non-simple. 
In this case, even though a monotone map and a representation of each side of $p$ are not uniquely determined, the core of any monotone map is the unique minimal invariant closed set for the action of $\Stab{G}{p}$ on the corresponding side; consequently, it does not depend on the monotone map and the representation. Similarly, the gaps do not depend on the monotone map and the representation. 
\end{rmk}

\begin{lem}\label{Lem:discreteSideGap}
Let $G$ be a hyperbolic-like group and $p$ a non-simple point. Then, the following are equivalent.
\begin{enumerate}[label=(\arabic*)]
    \item\label{i1:discreteSideGap} There is no right gap of $p$.
    \item\label{i2:discreteSideGap} Any right monotone map $m_{p,R(p)}\colon R(p)\to \RR$ is a homeomorphism.
    \item\label{i3:discreteSideGap} $\Stab{G}{p}$ is non-discrete on the right side of $p$.
    \item\label{i4:discreteSideGap} $G$ is non-discrete on the right side of $p$.
\end{enumerate}
The analogous statement holds when considering the left side of $p$.
\end{lem}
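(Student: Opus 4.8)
The plan is to prove the four conditions equivalent by treating $(1)$, $(2)$, $(3)$ entirely at the level of the elementary subgroup $H:=\Stab{G}{p}$ acting on the open interval $R(p)$, and only bringing in the ambient group $G$ for the implications involving $(4)$. The equivalence $(1)\Leftrightarrow(2)$ is immediate from the definitions: a right gap of $p$ is a connected component of $\Gap(m_{p,R(p)})$, and $m_{p,R(p)}$ is a homeomorphism exactly when $\Gap(m_{p,R(p)})=\emptyset$; by \refrmk{wellDefineGap} this does not depend on the chosen monotone map. It then remains to link these with discreteness, which I would organize as $(2)\Rightarrow(3)$, $(3)\Rightarrow(1)$, $(3)\Rightarrow(4)$, and finally $(4)\Rightarrow(3)$.

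For $(2)\Rightarrow(3)$ I would extend the right monotone map to a homeomorphism $\bar m\colon[p,\overline p]\to[-\infty,+\infty]$ of compactified intervals, conjugating the $H$-action to the translation action of $\theta_{p,R(p)}(H)\le(\RR,+)$. Since $p$ is non-simple, $H$ is non-cyclic, so by \refthm{elementary} its image under $\theta_{p,R(p)}$ is a non-cyclic, hence dense, subgroup of $\RR$. Choosing $h_n\in H$ with $0\neq\theta_{p,R(p)}(h_n)\to 0$, the corresponding translations converge uniformly to the identity on the compact interval $[-\infty,+\infty]$ (they are $1$-Lipschitz in the $\arctan$ coordinate), and conjugating back by the uniformly continuous $\bar m^{-1}$ gives $h_n\to\id$ uniformly on $R(p)$; as $h_n\neq\id$, this is exactly non-discreteness of $H$ on $R(p)$. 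For $(3)\Rightarrow(1)$ I would prove the contrapositive: if a right gap $J$ exists, the translates $\{h(J):h\in H\}$ are pairwise disjoint, because $h(J)=h'(J)$ forces $\theta_{p,R(p)}(h)=\theta_{p,R(p)}(h')$ and hence $h=h'$ by injectivity of the representation. These are disjoint subintervals of $R(p)$, so their lengths are summable, and only finitely many exceed any fixed threshold. A hypothetical sequence of distinct $h_n\to\id$ uniformly on $R(p)$ would give $|h_n(J)|\to|J|>0$ for infinitely many distinct translates, contradicting summability; hence $H$ is discrete on $R(p)$. This establishes $(1)\Leftrightarrow(2)\Leftrightarrow(3)$.

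The implication $(3)\Rightarrow(4)$ is immediate, since $\{h{\restriction_{R(p)}}:h\in H\}\subseteq\{g{\restriction_{R(p)}}:g\in G\}$, so non-isolation of $\id_{R(p)}$ in the smaller family persists in the larger. The reverse implication $(4)\Rightarrow(3)$ is the substantial one, and I would prove its contrapositive: the presence of a right gap (equivalently $\neg(3)$) forces $G$ to be discrete on $R(p)$. If $G$ is elementary, then $G=H$ by \refthm{elementary} and there is nothing to prove; so I may assume $G$ is non-elementary, whence it is locally discrete by \refprop{locDiscrete}. It therefore suffices to show that $R(p)$ is non-wandering, which by \reflem{limitSet} means exhibiting a point of $\Lambda(G)$ inside $R(p)$. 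Writing $C=\Core(m_{p,R(p)})$, the nonempty perfect minimal set of the $H$-action in the gap case, the set $C\cap\Lambda(G)$ is closed and $H$-invariant, so by minimality it is either all of $C$ — in which case $C\subseteq\Lambda(G)$ and we are done — or empty.

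The main obstacle is to exclude this last possibility, namely that $R(p)$ is disjoint from $\Lambda(G)$ and hence, its endpoints lying in $\Lambda(G)$, equals a single gap of the minimal set. In this putative situation \emph{no} element of $G$ can fix a point of $R(p)$, so the contradiction cannot be obtained by producing an interior fixed point; instead it must come from a violation of local discreteness on a \emph{non-wandering} interval. At least one side of $p$, say $L(p)$, is then non-wandering, for otherwise $\Lambda(G)\subseteq\{p,\overline p\}$ and $G$ is elementary. Using the proportionality of the left and right representations in \refthm{elementary}, a sequence $h_n\in H$ with $\theta(h_n)\to 0$ approaches the identity on $L(p)$ near the shared endpoint, and combining the compactification estimate of the second paragraph with the fact that the gaps accumulating at that endpoint shrink to zero length, I would aim to produce elements of $H$ converging uniformly to $\id$ on a non-wandering interval straddling a point of $\Lambda(G)$, contradicting local discreteness. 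Making this final step rigorous — controlling the dynamics near the two endpoints simultaneously, in the case where \emph{both} sides of $p$ carry gaps and $H$ is discrete on each — is the technical heart of the argument and the step I expect to be the most delicate.
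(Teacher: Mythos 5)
Your handling of (1)$\Leftrightarrow$(2)$\Leftrightarrow$(3) and of (3)$\Rightarrow$(4) is correct and essentially the paper's argument (the paper compresses your compactification and disjoint-translates arguments into the single observation that, for $h_n$ with $\theta(h_n)\to 0$, the restrictions $h_n{\restriction_{R(p)}}$ converge to $\id$ if and only if $m_{p,R(p)}$ is a homeomorphism). The genuine gap is in (4)$\Rightarrow$(3). Taking the contrapositive, you reduce to showing that $R(p)$ is non-wandering, and you then try to derive a contradiction from the remaining alternative, namely that $R(p)$ is a gap of $\Lambda(G)$ while $p$ has a right gap. But that alternative is not contradictory: the paper explicitly states, in the remark following \refprop{gapFromLimitSet}, that the existence of a right gap does \emph{not} imply that the right side meets the limit set. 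Your concrete plan for the contradiction also cannot be carried out: if $L(p)$ is non-wandering then, by \refprop{gapFromLimitSet}, $p$ has left gaps, and any fixed left gap $J$ satisfies $h(J)\cap J=\varnothing$ for every non-trivial $h\in\Stab{G}{p}$, which forces $\sup_{x}d(h(x),x)\ge |J|>0$ on any interval containing $J$; since every neighbourhood of $\overline p$ inside $L(p)$ contains such a gap, no non-trivial sequence in $\Stab{G}{p}$ converges uniformly to $\id$ on an interval straddling $\overline p$. The ``technical heart'' you flag is therefore an attempt to prove a false statement.

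The wandering sub-case should not be excluded but concluded directly, as the paper does. If $R(p)\cap\Lambda(G)=\varnothing$, then since $p,\overline p\in\Lambda(G)$ (they are fixed points of non-trivial elements; see \reflem{limitSet}), $R(p)$ is itself a gap of $\Lambda(G)$. Any $g\in G$ whose restriction to $R(p)$ is sufficiently close to $\id$ sends this gap to a gap of $\Lambda(G)$ meeting $R(p)$, hence $g(R(p))=R(p)$ and $g\in\Stab{G}{p}$. The discreteness of $\Stab{G}{p}$ on $R(p)$, which you already have from $\neg(3)$ via the proved equivalences, then forces $g{\restriction_{R(p)}}=\id_{R(p)}$, so $G$ is discrete on $R(p)$. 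With this replacement of your last step, the proof is complete and coincides with the paper's.
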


\begin{proof}
Since the left side of $p$ is the right side of $\overline{p}$, it is enough to discuss the case of the right side. After \refrmk{wellDefineGap}, a right gap of $p$ is a gap of any right monotone map at $p$, so \ref{i1:discreteSideGap} and \ref{i2:discreteSideGap} are equivalent. Next, consider a right monotone map and a right representation $\theta=\theta_{p,R(p)}$; note that the image of $\theta$ is dense in $\RR$, since we are assuming that $p$ is non-simple. Take any sequence $\{h_n\}_{n\in\NN}$ of non-trivial elements in $\Stab{G}{p}$ such that $\theta(h_n)$ converges to $0$, 
we have that the sequence of restrictions $\{h_n{\restriction_{R(p)}}\}_{n\in\NN}$ converges to $\id$ on $R(p)$ if and only if $m_{p,R(p)}$ is a homeomorphism. Thus, \ref{i2:discreteSideGap} and \ref{i3:discreteSideGap} are equivalent.
Finally, let us prove that \ref{i3:discreteSideGap} and \ref{i4:discreteSideGap} are equivalent. The implication \ref{i3:discreteSideGap}$\Rightarrow$\ref{i4:discreteSideGap} is clear, so let us assume \ref{i4:discreteSideGap}. If $G$ is elementary, \refthm{elementary} gives that $G=\Stab{G}{p}$, and there is nothing to prove. When $G$ is non-elementary, \refprop{locDiscrete} gives that $G$ is locally discrete, so the right side $R(p)$ must be a wandering interval for $G$, and actually a gap of the limit set $\Lambda(G)$ (by \reflem{limitSet}). Therefore, if $\{h_n\}_{n\in \NN}$ is any sequence of non-trivial elements of $G$ such that $\{h_n{\restriction_{R(p)}}\}_{n\in\NN}$ converges to $\id$ on $R(p)$, then $h_n$ must eventually fix $R(p)$, and consequently $h_n\in \Stab{G}{p}$ for any sufficiently large $n\in \NN$. This gives \ref{i4:discreteSideGap}$\Rightarrow$\ref{i3:discreteSideGap}.
\end{proof}

When a hyperbolic-like group is non-elementary, using \refprop{locDiscrete} as in the proof of \ref{i4:discreteSideGap}$\Rightarrow$\ref{i3:discreteSideGap} above, we see that if for a non-simple point there is no right (respectively, left) gap, then its right (respectively, left) side must be a gap of the limit set. Therefore, the converse of \refrmk{wellDefineGap} holds:

\begin{prop}\label{Prop:gapFromLimitSet}
    Let $G$ be a non-elementary hyperbolic-like group. Then, for any non-simple point $p$, there exists a gap of $p$. More precisely, if the right (respectively, left) side of $p$ is non-wandering, or equivalently, if it contains a fixed point of some non-trivial element of $G$, then there is a right (respectively, left) gap, and $G$ is discrete on $R(p)$ (respectively, on $L(p)$).
\end{prop}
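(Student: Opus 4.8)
The plan is to deduce the statement almost entirely from \reflem{discreteSideGap} and \refprop{locDiscrete}, treating the ``more precisely'' refinement as the main engine and then extracting the bare existence of a gap as a consequence. First I would record the two standing facts: since $G$ is non-elementary, its limit set $\Lambda(G)$ is either $S^1$ or a Cantor set, and in particular it is infinite; and since $p$ is non-simple, both $p$ and its companion $\overline p$ are fixed points of non-trivial elements, hence lie in $\Lambda(G)$ by \reflem{limitSet}. Throughout I would keep in mind the decomposition $S^1=R(p)\sqcup L(p)\sqcup\{p,\overline p\}$ into the two sides and the two boundary points.

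For the refinement, I would suppose the right side $R(p)$ is non-wandering; by \reflem{limitSet} this is the same as saying $R(p)$ meets $\Lambda(G)$, equivalently that $R(p)$ contains a fixed point of some non-trivial element. Then $R(p)$ is a non-wandering interval, so \refprop{locDiscrete} (local discreteness of $G$) gives that $G$ is discrete on $R(p)$. This is precisely the negation of condition \ref{i4:discreteSideGap} in \reflem{discreteSideGap}, so by the equivalence recorded there the negation of condition \ref{i1:discreteSideGap} holds as well, i.e.\ there is a right gap of $p$; the left side is handled by running the same argument at $\overline p$. It is cleanest to phrase this contrapositively, as in the remark preceding the statement: if there is no right gap, then $G$ is non-discrete on $R(p)$, so $R(p)$ cannot be non-wandering, and being a component of $S^1\setminus\{p,\overline p\}$ whose two endpoints lie in $\Lambda(G)$, the wandering interval $R(p)$ is then exactly a gap of $\Lambda(G)$.

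For the bare existence of a gap, I would argue by contradiction: if $p$ had neither a right nor a left gap, then by the contrapositive just described both $R(p)$ and $L(p)$ would be wandering, hence disjoint from $\Lambda(G)$. Together with the decomposition $S^1=R(p)\sqcup L(p)\sqcup\{p,\overline p\}$ this forces $\Lambda(G)\subseteq\{p,\overline p\}$, contradicting that $\Lambda(G)$ is infinite. Hence at least one side of $p$ is non-wandering, and the refinement then produces the corresponding gap.

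The argument is short precisely because the substantial content is front-loaded into \reflem{discreteSideGap} and \refprop{locDiscrete}. The only point I would take care to check is the passage from ``$R(p)$ is wandering'' to ``$R(p)$ is a whole gap of $\Lambda(G)$'': this relies on $p,\overline p\in\Lambda(G)$, so that the wandering open interval $R(p)$ is a \emph{maximal} interval disjoint from $\Lambda(G)$ rather than merely contained in one. I do not anticipate a genuine obstacle here, only the bookkeeping of matching the four equivalent conditions of \reflem{discreteSideGap} to the non-wandering hypothesis.
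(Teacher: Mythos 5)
Your proposal is correct and follows essentially the same route as the paper: the paper also derives the statement by combining the equivalence in \reflem{discreteSideGap} with the local discreteness from \refprop{locDiscrete} (arguing contrapositively that a side with no gap must be a wandering interval, hence a whole gap of $\Lambda(G)$ since $p,\overline p\in\Lambda(G)$), and the bare existence of a gap follows because $\Lambda(G)$ is infinite and so cannot be contained in $\{p,\overline p\}$. Your explicit check that $R(p)$ is a maximal wandering interval, and the bookkeeping through the four conditions of \reflem{discreteSideGap}, are exactly the points the paper leaves implicit.
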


\begin{rmk}
    The converse of the second statement in \refprop{gapFromLimitSet} is not true: the existence of a right gap does not imply that the right side intersects the limit set. 
\end{rmk}

\subsection{Cores at non-simple points}

\begin{defn}
    Let $G$ be a hyperbolic-like group. For any non-simple point $p$ in $S^1$, the \emph{core} at $p$, denoted by $\Core(p)$, is defined as
     \[\Core(p)=\Core(m_{p,R(p)}) \cup \Core(m_{p,L(p)})\cup \{p,\overline{p}\},\]
     where $m_{p,R(p)}$ (respectively, $m_{p,L(p)}$) is a right (respectively, left) monotone map.
\end{defn}
\begin{rmk}\label{Rmk:coreCantor}
By \refrmk{wellDefineGap}, the core at $p$ does not depend on the choice of the monotone maps: it is the closed set whose gaps are the gaps of $p$.
\end{rmk}

\begin{prop}\label{Prop:distinctCore}
Let $G$ be a hyperbolic-like group. Suppose that $p$ and $q$ are non-simple points with $p\notin\{q,\overline{q}\}$. Then, $\Core(p)\neq \Core(q)$.
\end{prop}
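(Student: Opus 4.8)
The plan is to argue by contradiction, assuming $\Core(p)=\Core(q)=:C$, and to exploit the minimality of the stabilizer actions on the two sides of a non-simple point. First I would dispose of the elementary case: if $G$ is elementary, then by \refthm{elementary} it has a single pair of fixed points, which is the only possible pair $\{p,\overline p\}=\{q,\overline q\}$ of non-simple points, so no admissible pair with $p\notin\{q,\overline q\}$ exists and the statement is vacuous. Hence assume $G$ is non-elementary. Note that $p\notin\{q,\overline q\}$ forces $\{p,\overline p\}\cap\{q,\overline q\}=\emptyset$, since $\overline p\in\{q,\overline q\}$ would give $p\in\{q,\overline q\}$ after taking companions. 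By \refprop{gapFromLimitSet} each core is a proper closed subset of $S^1$; combined with \reflem{discreteSideGap} and \refprop{locDiscrete}, every side of a non-simple point must carry a gap (a side with no gap would be both non-discrete and forced to be wandering, which is impossible), so $C$ is in fact a Cantor set. By \refrmk{coreCantor} the hypothesis $\Core(p)=\Core(q)$ says precisely that $p$ and $q$ have the same gaps; in particular $C$ is invariant under both $A:=\Stab{G}{p}$ and $B:=\Stab{G}{q}$.

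Second, I would extract the key dynamical mechanism. Since $q\in C\setminus\{p,\overline p\}$, the point $q$ lies in the interior of one side of $p$, say $q\in\opi{p}{\overline p}$. This side contains the fixed point $q$ of $B\setminus\{\id\}$, hence is non-wandering, so by \refprop{gapFromLimitSet} and \refrmk{wellDefineGap} the set $\Core(m_{p,R(p)})$ is the unique minimal closed set for the $A$-action on that side, and the orbit $A\cdot q$ is dense in it; being the accumulation part of a Cantor core, it accumulates at $p$. Choosing $a_n\in A$ with $a_nq\to p$, the monotone semiconjugacy of \refthm{elementary} shows $a_n\to p$ uniformly on compacta of $S^1\setminus\{\overline p\}$ and $a_n^{-1}\to\overline p$ uniformly on compacta of $S^1\setminus\{p\}$; tracking the companion then gives $a_n\overline q\to p$ as well (if $\{p,\overline p\}$ and $\{q,\overline q\}$ are unlinked, both $q,\overline q$ lie in $\opi{p}{\overline p}$ and are pushed to $p$; if linked, $\overline q\in\opi{\overline p}{p}$ is pushed to $p$ from the other side). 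Thus the fixed-point pair $\{a_nq,a_n\overline q\}$ of the conjugate stabilizer $a_nBa_n^{-1}$ collapses to the single point $p$, and a symmetric argument collapses $\{p,\overline p\}$ onto $q$ under suitable elements of $B$; note that $a_nq\notin\{q,\overline q\}$ by \refcor{commute} and torsion-freeness, so each $a_nq$ is again an admissible non-simple point with $\Core(a_nq)=a_nC=C$.

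The main obstacle is converting this collapse into a contradiction, and this is the technical heart of the argument. The difficulty is that the merging of the two fixed points at $p$ does not by itself violate anything local: for any fixed $\beta\in B\setminus\{\id\}$ the conjugates $a_n\beta a_n^{-1}$, and more generally the products and commutators of the collapsing elements from $A$ and $B$, all degenerate to constant maps uniformly off a single point, so they mimic honest hyperbolic-like elements with one fixed point near $p$ and the other near a second point, and neither local discreteness (\refprop{locDiscrete}, \reflem{discreteSideGap}) nor the two-fixed-point property is contradicted \emph{directly}. I therefore expect the contradiction to be genuinely global and combinatorial: using the dense family of pinch pairs $\{a_nq,a_n\overline q\}$ produced above, together with the pairs $\{p,\overline p\}$, one must locate a product or commutator of elements of $A$ and $B$ whose fixed-point configuration is forced to be incompatible with being hyperbolic-like (an element constrained to have exactly one, or zero, fixed points). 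These are exactly the combinatorial restrictions on fixed points of compositions and commutators that the paper defers to the appendix, and I would invoke them to rule out the merged configuration, thereby contradicting $\Core(p)=\Core(q)$ and completing the proof.
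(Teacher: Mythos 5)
Your proposal sets up the correct contradiction hypothesis and makes some valid preliminary observations, but it does not actually prove the statement: your final paragraph concedes that the collapse of the pairs $\{a_nq,\,a_n\overline q\}$ onto $p$ violates nothing by itself, and that you would then invoke unspecified combinatorial restrictions on commutators to finish. That missing step is the entire content of the proof, so as written there is a genuine gap. The paper's mechanism is different and you never identify it: one collapses the closure of each gap of the common core $\cC$ to a point, obtaining a circle on which $H=\langle \Stab{G}{p},\Stab{G}{q}\rangle$ acts faithfully, non-elementarily and M\"obius-likely, with every non-trivial element having at least one fixed point; crucially, after collapsing, the monotone maps at the images of $p$ and $q$ become homeomorphisms, so by \reflem{discreteSideGap} the stabilizers act \emph{non-discretely} on both sides of those points, and \refprop{locDiscrete} then forces $H$ to contain an element without fixed points --- a contradiction. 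The engine is thus the discreteness dichotomy from \cite{BonattiCarnevaleTriestino24}, not the fixed-point combinatorics of the appendix (which the paper reserves for the ping-pong theorems).

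Two secondary inaccuracies. First, your claim that every side of a non-simple point must carry a gap does not follow from the cited results: a side that is wandering (a gap of $\Lambda(G)$) can have no gap of $p$ at all, and \reflem{discreteSideGap} only says that $G$ is then non-discrete on that side, which is perfectly consistent with local discreteness (the paper's own remark after \refprop{gapFromLimitSet} warns against exactly this converse); consequently the common core need not be a Cantor set. Second, the limit argument $a_n\to p$ uniformly on compacta of $S^1\setminus\{\overline p\}$ is fine as far as it goes, but, as you yourself observe, the degenerate conjugates $a_n\beta a_n^{-1}$ contradict no hypothesis, so this machinery does not advance the argument toward a contradiction.
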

\begin{proof}
Assume that $\cC=\Core(p)=\Core(q)$, and consider the subgroup $H\le G$ generated by $\Stab{G}{p}$ and $\Stab{G}{q}$. Note that the condition $p\notin\{q,\overline{q}\}$ ensures that $H$ is non-elementary (by \refthm{elementary}).
By assumption, $H$ preserves $\cC$. After collapsing the closure of each gap of $\cC$ into a point, we obtain a new circle $\cS^1$, on which $H$ acts. 
The induced $H$-action on $\cS^1$ satisfies the following properties:
\begin{itemize}
    \item it is faithful (since any element in the kernel preserves each gap of $\cC$, and consequently has infinitely many fixed points),
    \item non-elementary (since the points $p$, $\overline{p}$, $q$ and $\overline{q}$ are in $\cC$),
    \item Möbius-like, that is, each element of $H$ is conjugate to an element in $\PSL(\RR
    )$ (since this property is preserved by the collapsing),
    \item every element has at least one fixed point (since fixed points of non-trivial elements cannot belong to gaps of $\cC$, as one can see by reproducing the argument for \reflem{limitSet}),
    \item the stabilizers $\Stab{G}{p}$ and $\Stab{G}{q}$ act non-discretely on both sides of the images of $p$ and $q$, respectively (since $p$ and $q$ are non-simple points). 
\end{itemize}
This contradicts \refprop{locDiscrete}. Thus, $\Core(p)\neq \Core(q)$.
\end{proof}

\subsection{Configurations of gaps}

 Here, we introduce one of the key ingredients for the proof of our main results. This relies on the analysis of fixed points of compositions from the appendix.

\begin{lem}\label{Lem:containGapGeneral}
    Let $G$ be a hyperbolic-like group. Suppose that $p$ and $q$ are non-simple points with $q\nin \{p,\overline{p}\}$, and that there is a gap $J$ of $q$ such that $\closure{J}\cap \{p,\overline p\}=\varnothing$. Then, at least one of the following conditions is satisfied:
    \begin{enumerate}
        \item $J$ is contained in some gap of $p$, or
        \item the pairs $\{p,\overline p\}$ and $\{q,\overline q\}$ are unlinked, and $J$ and $\{p,\overline p\}$ are not on the same side of $q$.
    \end{enumerate} 
\end{lem}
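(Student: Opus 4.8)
The plan is to prove the dichotomy in its contrapositive, packaged form: assuming that $J$ is \emph{not} contained in any gap of $p$, I will deduce that the pairs $\{p,\overline p\}$ and $\{q,\overline q\}$ are unlinked and that $J$ and $\{p,\overline p\}$ lie on different sides of $q$. The starting point is a clean reformulation: the complement $S^1\setminus\Core(p)$ is exactly the disjoint union of the gaps of $p$ (\refrmk{coreCantor}), so if the connected set $J$ were disjoint from $\Core(p)$ it would lie in a single gap of $p$; hence \emph{$J$ is not contained in a gap of $p$ if and only if $J\cap\Core(p)\neq\varnothing$}. Next I fix positions. Since $\closure J\cap\{p,\overline p\}=\varnothing$, the connected set $J$ lies inside one of the two components of $S^1\setminus\{p,\overline p\}$, i.e.\ inside a single side of $p$; up to replacing $p$ by $\overline p$ I assume $J\subset R(p)$. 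Likewise $J$ lies in a single side of $q$; up to replacing $q$ by $\overline q$ I assume $J\subset R(q)$. With these normalizations, ``$J$ and $\{p,\overline p\}$ lie on the same side of $q$'' (in the unlinked case) means exactly $\{p,\overline p\}\subset R(q)$, and the target configuration of case (2) is the unlinked one with $\{p,\overline p\}\subset L(q)$.

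The second step converts the hypothesis $J\cap\Core(p)\neq\varnothing$ into usable dynamical data. Pick a point $x\in J\cap\Core(m_{p,R(p)})$. Since $\Core(m_{p,R(p)})$ is the \emph{minimal} $\Stab{G}{p}$-invariant closed subset of $R(p)$ (\refrmk{wellDefineGap}), the $\Stab{G}{p}$-orbit of $x$ is dense in it and in particular accumulates on both endpoints $p$ and $\overline p$ of $R(p)$. Thus, using elements of $\Stab{G}{p}$, I can slide the point $x\in J$ arbitrarily close to either $p$ or $\overline p$. On the other hand, $J$ is a gap of $q$, hence a wandering interval for $\Stab{G}{q}$, so its $\Stab{G}{q}$-translates are pairwise disjoint intervals accumulating on both $q$ and $\overline q$; that is, I can slide the whole interval $J$ arbitrarily close to either $q$ or $\overline q$. (When $R(p)$ is non-wandering one moreover has $\Core(m_{p,R(p)})\subset\Lambda(G)$, so $J$ meets the limit set and contains a fixed point of some non-trivial element by \reflem{limitSet}; I will use this only to locate fixed points.) These two sliding mechanisms are the levers for the combinatorial step.

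The heart of the proof is to eliminate the two bad configurations under the standing assumption $J\cap\Core(p)\neq\varnothing$: (i) $\{p,\overline p\}$ and $\{q,\overline q\}$ linked, and (ii) unlinked with $\{p,\overline p\}\subset R(q)$. In each case I would choose non-trivial elements $g\in\Stab{G}{p}$ and $h\in\Stab{G}{q}$, adapted to the arc in which $J$ sits (as fixed by the normalization above), and examine a composition $g^a h^b$ or a commutator $[g,h]$. The appendix results on fixed points of compositions and commutators of stabilizer elements restrict where the fixed points of such an element may lie; the point is that in the bad configurations the two sliding mechanisms of the previous paragraph force a fixed point of the composition to fall inside $J$, \emph{in addition} to the fixed points dictated by the positions of $\{p,\overline p\}$ and $\{q,\overline q\}$. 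Since every non-trivial element of the hyperbolic-like group $G$ has exactly two fixed points, this over-determination is contradictory; equivalently, reading the same over-determination as in the proof of \refprop{distinctCore}, it forces $\Stab{G}{p}$ and $\Stab{G}{q}$ to generate a group acting non-discretely on a non-wandering interval, against \refprop{locDiscrete}. Ruling out (i) and (ii) leaves only the unlinked configuration with $\{p,\overline p\}$ on the side of $q$ opposite to $J$, which is precisely case (2).

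I expect the geometric reduction and the positioning of the first two paragraphs to be routine. The genuine obstacle is the combinatorial elimination of the third paragraph: the careful bookkeeping of the attracting/repelling nature and the cyclic location of the fixed points of the chosen compositions in each sub-case. I anticipate the linked case to be the most delicate, since there the pairs $\{p,\overline p\}$ and $\{q,\overline q\}$ interleave and one must track precisely into which of the four arcs the newly created fixed point is forced. This is exactly the situation the technical fixed-point lemmas of the appendix are designed to control, and the argument will hinge on invoking them with the correct choice of $g$, $h$, and exponents dictated by the sliding directions.
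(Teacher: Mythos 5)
Your first two paragraphs reproduce the paper's setup faithfully: normalize so that $\closure{J}\subset R(p)\cap R(q)$, observe that ``$J$ lies in no gap of $p$'' is equivalent to $J\cap\Core(p)\neq\varnothing$, and extract from minimality of the cores the ability to move $J$ by elements of the two stabilizers. But everything after that --- your third paragraph, which you yourself identify as ``the heart'' and ``the genuine obstacle'' --- is a statement of intent rather than an argument, and the contradiction mechanism you sketch is not the one that works. You propose that the sliding forces a fixed point of some composition to fall inside $J$ ``in addition to the fixed points dictated by the positions of $\{p,\overline p\}$ and $\{q,\overline q\}$'', so that the element would have too many fixed points. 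A composition of a $\Stab{G}{p}$-element with a $\Stab{G}{q}$-element has no fixed points dictated by those four points (it fixes none of them in general), and in the configuration that actually survives --- conclusion (2) of the lemma --- the relevant composition really does acquire a fixed point inside $J$ with no contradiction whatsoever. So ``a fixed point lands in $J$'' cannot be the contradiction; what matters is the cyclic position of \emph{both} fixed points of a single well-chosen element relative to $p$ and $\overline p$, and that is exactly the bookkeeping you defer. Your fallback suggestion (non-discreteness against \refprop{locDiscrete}) is likewise unsubstantiated here.

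The missing construction is concrete and short. Choose $h\in H_+(p)$ with $J\cap h(J)\neq\varnothing$ (possible since $J$ meets the minimal set $\Core(p)$ in $R(p)$), set $K=J\setminus h^{-1}(\closure{J})$ and $I=J\setminus h(\closure{J})$, and note that $h(K)$ and $h^{-1}(I)$ are adjacent to the gap $J$ of $q$, hence meet $\Core(q)$; this lets you choose $g\in H_+(q)$ with $g^{-1}(\closure{J})\subset h^{-1}(I)$ and $g(\closure{J})\subset h(K)$. Then $hg^{-1}(\closure{I})\subset I$ and $gh^{-1}(\closure{h(K)})\subset h(K)$, so the single element $hg^{-1}$ satisfies $a(hg^{-1})\in I$ and $r(hg^{-1})\in h(K)$, i.e.\ both of its fixed points lie in $R(p)\cap R(q)$ with $p<a(hg^{-1})<r(hg^{-1})<\overline p$. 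Comparing with \refprop{configFix} and Table~\ref{Table:fixedPointsComposition}: in the linked case the admissible diagrams force the two fixed points of $hg^{-1}$ onto opposite sides of $p$, a contradiction; in the unlinked case the surviving diagrams either violate the order above or place $J$ and $\{p,\overline p\}$ on opposite sides of $q$, which is precisely conclusion (2). Without this (or an equivalent) explicit choice of elements and the case-check against the table, the proof is incomplete.
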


\begin{proof}
   Upon replacing $p$ and $q$ with their companion points $\overline{p}$ and $\overline{q}$, we can assume that $\overline J\subset R(p)\cap R(q)$.
    Assume by contradiction that there is no right gap of $p$ that contains $J$, or equivalently, that $J\cap \Core(p)\neq\varnothing$. Since $p$ is non-simple, we can find an element $h\in H_+(p)$ such that $J\cap h(J)\neq\varnothing$. Consider the intervals $K=J\setminus h^{-1}(\overline J)$ and $I=J\setminus h(\overline J)$. Then the images $h(K)$ and $h^{-1}(I)$ are in $R(p)$, and since they are adjacent to $J$, they both intersect $\Core(q)$. Since $q$ is non-simple and $J$ is a gap of $q$, we can find an element $g\in H_+(q)$ such that $g^{-1}(\overline J)\subset h^{-1}(I)$ and $g(\overline J)\subset h(K)$. With this condition, we have
    \[
    g^{-1}(\overline I)\subset g^{-1}(\overline J)\subset h^{-1}(I)\quad\text{and}\quad g(\overline K)\subset g(\overline J)\subset h(K),
    \]
    and thus $hg^{-1}(\overline I)\subset I$ and $gh^{-1}(\overline {h(K)})\subset h(K)$. Hence,
    \[a(hg^{-1})\in I\quad\text{and}\quad r(hg^{-1})=a(gh^{-1})\in h(K).\]
    Note that $h(K)\subset h(J)\subset R(q)$, and thus $\Fix(gh^{-1})\subset R(p)\cap R(q)$. Remark also that we have 
    \begin{equation}\label{Eqn:linkedGaps}
        p< a(hg^{-1})<r(hg^{-1})<\overline p<p.
    \end{equation}
     Now, we want to apply \refprop{configFix}: we look at the fixed point configurations for $g^{-1}$ (red), $h$ (blue), and the composition $hg^{-1}$ (purple) in Table \ref{Table:fixedPointsComposition} that are compatible with the conditions we have.
    When the pairs $\{p,\overline p\}$ and $\{q,\overline q\}$ are linked, we have to consider diagrams 1a and 1b; however, in both diagrams, the fixed points of $hg^{-1}$ are on different sides of $p$, and this is in contradiction with the condition $\Fix(hg^{-1})\subset R(p)$. For the unlinked case, recall that $r(h)=p$ and $r(g^{-1})=\overline q$, and that $\Fix(hg^{-1})\subset R(p)\cap R(q)=R(p)\cap L(\overline q)$; this reduces the possibilities to 
    2d/2e and 3b/3c.
    Note that the diagrams 2d/2e are not compatible with the condition \eqref{Eqn:linkedGaps}.
    So the only possible cases are 3b/3c, where we can see that $J$ and $\{p,\overline p\}$ are not on the same side of $q$.
\end{proof}

\section{Ping-pong dynamics in the linked case}\label{Sec:linked}

In this section we prove Theorem \ref{Thm1:linked}. This will be done through the analysis of fixed points of commutators of the form $[h,f]$, with $h\in \Stab{G}{p}$ and $f\in \Stab{G}{q}$. The preliminary combinatorial analysis is discussed in \refprop{fixCommutator} at the end of the paper and motivates the following definition.

\begin{defn}
    Let $G$ be a hyperbolic-like group, and consider points $p$ and $q$ with non-trivial stabilizers. Assume that $p<q<\overline p<\overline q<\overline p$. We say that a pair of elements $(h,f)\in H_+(p)\times H_+(q)$ is \emph{geometric} if we have the relations
   \[ \begin{array}{cccccccccccc}
        &p&<&h^{-1}(q)&<&r([f^{-1},h^{-1}])&<&a([f^{-1},h^{-1}])&<&f^{-1}(p)&\\
        <&q&<&f^{-1}(\overline p)&<&r([h,f^{-1}])&<&a([h,f^{-1}])&<&h(q)&\\
        <&\overline p&<&h(\overline q)&<&r([f,h])&<&a([f,h])&<&f(\overline p)&\\
        <&\overline q&<& f(p)&<& r([h^{-1},f])&<&a([h^{-1},f])&<&h^{-1}(q)&<&p.
    \end{array}\]
    See Figure \ref{Fig:fixCommutator} (left). We say that the points $p$ and $q$ are \emph{geometrically linked} if every pair of elements $(h,f)\in H_+(p)\times H_+(q)$ is geometric.
    When $p<\overline q<\overline p<q<p$, we say that $p$ and $q$ are \emph{geometrically linked} if $p$ and $\overline q$ are.
\end{defn}

The main result of this section is that when $G$ is a hyperbolic-like group, every pair of linked non-simple points is geometrically linked. From this, it is not difficult to find a nice proper ping-pong partition for the stabilizers; we work this out with \refcor{pingPongLinked} at the end of the section.

\begin{thm}\label{Thm:linkedCase}
    Let $G$ be a hyperbolic-like group. Assume that $p$ and $q$ are linked non-simple points, then $p$ and $q$ are geometrically linked. 
\end{thm}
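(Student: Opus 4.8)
The plan is to fix an arbitrary pair $(h,f)\in H_+(p)\times H_+(q)$ and to verify the full cyclic chain defining geometricity for it; since that chain \emph{is} the definition of ``$(h,f)$ is geometric'', doing this for every pair is exactly the assertion that $p$ and $q$ are geometrically linked. By the convention in the definition I may assume $p<q<\overline p<\overline q<p$, the case $p<\overline q<\overline p<q<p$ reducing to it by replacing $q$ with $\overline q$. The first thing I would record is that, because $p\notin\{q,\overline q\}$, \refcor{commute} forces $h$ and $f$ not to commute; hence each of the four commutators $[f^{-1},h^{-1}]$, $[h,f^{-1}]$, $[f,h]$, $[h^{-1},f]$ is a non-trivial element of $G$, and therefore hyperbolic-like, with a well-defined attracting point $a(\cdot)$ and repelling point $r(\cdot)$.

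Next I would organize the picture around the four arcs cut out by the two linked pairs, $A_1=\opi{p}{q}$, $A_2=\opi{q}{\overline p}$, $A_3=\opi{\overline p}{\overline q}$ and $A_4=\opi{\overline q}{p}$, and note the coarse placements of the eight image points that come for free from the north--south dynamics of $h$ and $f$ (for instance $h^{-1}(q),f^{-1}(p)\in A_1$ and $f^{-1}(\overline p),h(q)\in A_2$). The actual content of the theorem is not these placements but the finer orderings \emph{inside} each arc --- such as $h^{-1}(q)<f^{-1}(p)$ and $f^{-1}(\overline p)<h(q)$ --- together with the sandwiching of the commutator fixed points between the two image points of each arc; crucially, these do \emph{not} follow from the dynamics of $h$ and $f$ alone (in a genuine Fuchsian group one can arrange $f^{-1}(p)<h^{-1}(q)$ by taking $f$ much smaller than $h$), so the non-simplicity of $p$ and $q$ must be spent here. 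The key step is therefore the localization of the commutator fixed points: I want to show that the two fixed points of $[h,f^{-1}]$ lie in the subarc $\opi{f^{-1}(\overline p)}{h(q)}\subset A_2$ with $r([h,f^{-1}])$ before $a([h,f^{-1}])$, and symmetrically for the other three commutators. This is precisely the output of the commutator analysis \refprop{fixCommutator}, applied in the spirit of \reflem{containGapGeneral}: one lists the a priori possible fixed-point configurations of the commutator relative to the image points and then discards every one incompatible with the hypotheses, using that the commutator is hyperbolic-like (hence has \emph{exactly} two fixed points of prescribed type), that $p,q$ are non-simple so that both sides of $p$ and of $q$ meet the limit set and auxiliary elements of $H_+(p),H_+(q)$ can be fed in as in \reflem{containGapGeneral}, and that $p,q$ are linked, which is what singles out the surviving entry of the table. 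In particular the nonemptiness of each housing subarc is exactly what delivers the finer orderings $f^{-1}(\overline p)<h(q)$ and $h^{-1}(q)<f^{-1}(p)$.

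To keep the case analysis from quadrupling I would exploit the symmetries of the configuration. The companion involution $p\mapsto\overline p$, $q\mapsto\overline q$, $h\mapsto h^{-1}$, $f\mapsto f^{-1}$ preserves the hypotheses and the cyclic order, and substituting into the words shows it exchanges $[f^{-1},h^{-1}]\leftrightarrow[f,h]$ and $[h,f^{-1}]\leftrightarrow[h^{-1},f]$; since it is a bijection on pairs, proving the localization of $[f^{-1},h^{-1}]$ and of $[h,f^{-1}]$ for all pairs yields the localization of the other two for all pairs. I would additionally use the group identities $[f^{-1},h^{-1}]=h^{-1}[h,f^{-1}]h$ and $[f,h]=f[h,f^{-1}]f^{-1}$, which express three of the commutators' fixed points as $h^{\pm1}$- or $f^{\pm1}$-images of those of $[h,f^{-1}]$, as a consistency check that the four localizations glue into one coherent cyclic order rather than re-running the table analysis four times.

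With the four localizations in hand the assembly is routine: inside each arc the order reads (left endpoint) $<$ (first image point) $<r(\cdot)<a(\cdot)<$ (second image point), and concatenating the four arcs along the given order $p<q<\overline p<\overline q$ reproduces the displayed chain verbatim; \refcor{pingPongLinked} then reads off the ping-pong partition. The main obstacle is the localization step of the second paragraph: showing that each commutator's two fixed points fall into the single correct subarc and in the correct repelling-then-attracting order. This is where every hypothesis is genuinely used, and where the table-exclusion argument behind \refprop{fixCommutator} --- the hyperbolic-like substitute for the classical Schottky picture, which has to be proved without the luxury of an embedding of $G$ into $\PSL(\RR)$ --- carries the real weight.
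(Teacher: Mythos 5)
There is a genuine gap, and it sits exactly at the point you yourself identify as carrying ``the real weight.'' You correctly observe that the coarse placements of the image points follow from the north--south dynamics of $h$ and $f$ alone, and that the substance of the theorem is the localization of each commutator's fixed points in the correct subarc. But you then assert that this localization ``is precisely the output of the commutator analysis \refprop{fixCommutator}.'' It is not: \refprop{fixCommutator} is an exhaustive case analysis that leaves \emph{five} possibilities on the table --- the geometric configuration \emph{plus} four non-geometric configurations in which all four commutators have both fixed points crowded into a single arc (e.g.\ $q<h(q)<a([h,f^{-1}])<r([h,f^{-1}])<f^{-1}(\overline p)<\overline p$). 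The entire content of \refthm{linkedCase} is the elimination of these four non-geometric configurations, and your proposal contains no mechanism for doing so beyond the remark that non-simplicity ``must be spent here'' and that auxiliary elements ``can be fed in as in \reflem{containGapGeneral}.'' That is a statement of intent, not an argument.

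Concretely, the paper's elimination is a proof by contradiction with three ingredients that are absent from your sketch: (i) a comparison of the gap structures of $p$ and $q$ on the arc containing the commutator fixed points (\reflem{containGap}), producing a maximal interval $I$ on which $\Core(p)$ and $\Core(q)$ coincide; (ii) a Main Claim that any non-geometric pair with $\Fix([h,f])\subset\opi{q}{\overline p}$ forces a single gap of $p$ to contain both $h(q)$ and $f^{-1}(\overline p)$ in its closure, proved by reading $f$ through a right monotone map $m$ at $p$ and showing that the displacement $\varphi(z)=\hat f(z)-z$ is strictly increasing (this is where non-simplicity, i.e.\ density of the image of the translation-number homomorphism $\theta$, is actually spent, via commutators $[f,k]$ and compositions $k^{-1}f$ for well-chosen $k\in H_+(p)$); and (iii) a final contradiction obtained by choosing infinitely many $k\in H_+(p)$ with $k(q)$ in a window not covered by any single gap, so that the Main Claim cannot hold for all of them. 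Your symmetry reductions (the companion involution exchanging the commutators, and the conjugacy relations among them) are sound and do match the paper's bookkeeping, but without an argument of the type (i)--(iii) the proof does not close: in the table of \refprop{fixCommutator} nothing ``singles out the surviving entry,'' and as written the proposal would equally well ``prove'' the false statement that every pair of merely non-trivial (cyclic) linked stabilizers is geometric.
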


Before proceeding to the proof of \refthm{linkedCase}, we describe how gaps of linked non-simple points are related by inclusion. Note that when $p$ and $q$ are linked non-simple points, \refprop{gapFromLimitSet} ensures that $p$ and $q$ have both right and left gaps. 

\begin{lem}\label{Lem:containGap}
    Let $G$ be a hyperbolic-like group. Assume that $p$ and $q$ are linked non-simple points, and let $J$ be a gap of $q$. Then, we have the following alternative:
    \begin{enumerate}[label=(\arabic*)]
        \item\label{Itm:containGap1} $J$ is also a gap of $p$, or
        \item\label{Itm:containGap2} there exists a gap $K$ of $p$ such that $J\subset K$ and $\closure{K}\cap \{q,\overline q\}\neq \varnothing$, or
        \item\label{Itm:containGap3} $\closure{J}\cap \{p,\overline p\}\neq \varnothing$, in which case $J$ contains gaps of $p$, and there exists a gap $K$ of $p$ such that $\closure{K}\cap \{q,\overline q\}\neq\varnothing$ and $\closure{K}\cap \partial J\neq \varnothing$.
    \end{enumerate}
\end{lem}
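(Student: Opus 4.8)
The plan is to prove Lemma~\ref{Lem:containGap} by combining the general configuration result of Lemma~\ref{Lem:containGapGeneral} with the combinatorial constraints coming from the appendix, exploiting crucially that in the linked case alternative~(2) of Lemma~\ref{Lem:containGapGeneral} is vacuous. Let me sketch how I would organize the argument.

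\medskip

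First I would reduce to a clean dichotomy based on the intersection of $\closure{J}$ with $\{p,\overline p\}$. If $\closure{J}\cap\{p,\overline p\}=\varnothing$, then Lemma~\ref{Lem:containGapGeneral} applies directly: since $p$ and $q$ are linked, the second alternative of that lemma (which requires $\{p,\overline p\}$ and $\{q,\overline q\}$ unlinked) cannot occur, so $J$ is contained in some gap $K$ of $p$. I would then split on whether $\closure K\cap\{q,\overline q\}=\varnothing$ or not. In the former case I claim $K=J$, giving alternative~\ref{Itm:containGap1}: indeed, if $J\subsetneq K$ with $\closure K\cap\{q,\overline q\}=\varnothing$, then $K$ is a gap of $p$ strictly containing the gap $J$ of $q$ while avoiding $\{q,\overline q\}$, and I would derive a contradiction by running Lemma~\ref{Lem:containGapGeneral} with the roles of $p$ and $q$ exchanged — the reverse inclusion $K\subset(\text{gap of }q)$ is impossible since $J$ is a gap of $q$ and $J\subsetneq K$. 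If instead $\closure K\cap\{q,\overline q\}\neq\varnothing$, we land in alternative~\ref{Itm:containGap2}. So the no-endpoint case is essentially bookkeeping on top of Lemma~\ref{Lem:containGapGeneral}.

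\medskip

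The substantial case is $\closure J\cap\{p,\overline p\}\neq\varnothing$, which must yield alternative~\ref{Itm:containGap3}. Here one endpoint of $J$ is $p$ or $\overline p$; say $p\in\partial J$, so $J$ is a gap of $q$ adjacent to $p$. I would first argue that $J$ contains gaps of $p$. Since $p$ is non-simple, Proposition~\ref{Prop:gapFromLimitSet} guarantees $p$ has gaps on both sides; the side of $p$ into which $J$ opens is non-wandering for $\Stab{G}{p}$ — here is where I expect the crux to lie — and I would show that $J$, being an interval abutting $p$ and meeting $\Core(p)$, must contain at least one full gap of $p$. Concretely, because $J$ is wandering for $\Stab{G}{q}$ but its closure touches $p$, and $p$ is an attracting or repelling fixed point for elements of $\Stab{G}{p}$, pushing $J$ toward $p$ by such elements and comparing with $\Core(p)$ produces a gap of $p$ inside $J$. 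Then, to locate the gap $K$ of $p$ with $\closure K\cap\{q,\overline q\}\neq\varnothing$ and $\closure K\cap\partial J\neq\varnothing$, I would look at the other endpoint of $J$, which lies in $\Core(q)$ and hence (after the previous analysis of how cores interleave) is an endpoint, or an accumulation point on one side, of a gap $K$ of $p$ whose closure reaches toward $\{q,\overline q\}$ — this is exactly the ping-pong picture of Figure~\ref{Fig:possiblePingPong} (left), where the blue and red chords cross.

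\medskip

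The main obstacle I anticipate is controlling the behavior at the endpoint of $J$ lying in $\Core(q)$ in the case $\closure J\cap\{p,\overline p\}\neq\varnothing$: I must produce a specific gap $K$ of $p$ whose closure simultaneously meets $\{q,\overline q\}$ and $\partial J$, and ruling out degenerate configurations likely requires invoking the fixed-point configuration analysis (the analogue of Proposition~\ref{Prop:configFix} used in Lemma~\ref{Lem:containGapGeneral}) rather than soft arguments alone. I would handle this by a commutator/composition argument parallel to the proof of Lemma~\ref{Lem:containGapGeneral}: assuming no such $K$ exists, I would build elements $h\in H_+(p)$ and $g\in H_+(q)$ whose composition has a fixed-point configuration incompatible with the linked arrangement $p<q<\overline p<\overline q$, contradicting the tabulated constraints. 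Throughout, the linkedness hypothesis is what forces every alternative to collapse into the three listed outcomes, and Proposition~\ref{Prop:distinctCore} (guaranteeing $\Core(p)\neq\Core(q)$) is what prevents the trivial degeneration where every gap of $q$ is simultaneously a gap of $p$.
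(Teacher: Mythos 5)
Your treatment of the case $\closure{J}\cap\{p,\overline p\}=\varnothing$ matches the paper's: linkedness kills the second alternative of \reflem{containGapGeneral}, so $J\subset K$ for a gap $K$ of $p$, and if $\closure{K}\cap\{q,\overline q\}=\varnothing$ a second application with the roles of $p$ and $q$ exchanged sandwiches $K$ between two intersecting gaps of $q$, forcing $J=K$. That half is fine. (One small slip elsewhere: you write that in case (3) ``one endpoint of $J$ is $p$ or $\overline p$'', but $\closure{J}\cap\{p,\overline p\}\neq\varnothing$ also allows $p$ or $\overline p$ to lie in the interior of $J$ --- which is in fact the generic picture in \refcor{pingPongLinked}; the argument should only use that $J$ contains a one-sided neighbourhood of, say, $\overline p$, together with the fact that $p$ has gaps on both sides accumulating on $\overline p$, which is what gives ``$J$ contains gaps of $p$''.)

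The genuine gap is the final assertion of alternative \refitm{containGap3}: the existence of a gap $K$ of $p$ with $\closure{K}\cap\{q,\overline q\}\neq\varnothing$ and $\closure{K}\cap\partial J\neq\varnothing$. You correctly identify this as the crux, but you only announce a strategy (``build $h\in H_+(p)$ and $g\in H_+(q)$ whose composition has an incompatible fixed-point configuration'') without executing it, so nothing is actually proved here; and the strategy you propose is not the one needed. The paper requires no new appeal to \refprop{configFix} at this point: it argues by contradiction that if no such $K$ exists then (taking $J\subset R(q)$) both components of $R(q)\setminus\closure{J}$ meet $\Core(p)$, picks $h\in H_+(p)$ with $h(q)$ and $h(\overline q)$ lying in those two components, and then applies \reflem{containGapGeneral} to the pair of non-simple points $h(q)$ and $q$ with the gap $J$: since $J$ and $\{h(q),h(\overline q)\}$ lie on the same side of $q$, the unlinked alternative is excluded, so $J$ is contained in some gap $J'$ of $h(q)$; but $h(J)$ is itself a gap of $h(q)$ strictly contained in $J$ (as $h$ pushes $J$ towards $\overline p\in\closure{J}$), forcing $h(J)=J'\supset J\supsetneq h(J)$, a contradiction. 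To complete your write-up you would need either to reproduce this self-application of \reflem{containGapGeneral} to $q$ and its $H_+(p)$-translate, or to actually carry out the composition/fixed-point computation you only sketch; as it stands, the hard case is missing.
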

    
\begin{proof}
     First, let us assume that $\closure{J}\cap \{p,\overline p\}=\varnothing$, and prove that one of the first two conditions is satisfied. When this happens, by \reflem{containGapGeneral}, the gap $J$ is contained in a gap $K$ of $p$. If $\closure{K}\cap \{q,\overline q\}=\varnothing$, then we can apply \reflem{containGapGeneral} exchanging the roles of $p$ and $q$, and find a gap $J'$ of $q$ containing $K$. Therefore, we have $J\subset K\subset J'$, and since any two gaps of $q$ either coincide or are disjoint, we deduce that $J=K=J'$.

    Next, let us suppose $\closure{J}\cap \{p,\overline p\}\neq \varnothing$; up to replace $p$ with $\overline p$, we can assume that $J$ intersects any non-trivial neighborhood of $\overline p$. Since $p$ has gaps on both sides and gaps accumulate on $\overline p$, we deduce that $J$ contains gaps of $p$. To conclude the proof, assume by contradiction that there is no gap $K$ of $p$ such that $\closure{K}\cap \{q,\overline q\}\neq \varnothing$ and $\closure{K}\cap \partial J\neq \varnothing$. If so, without loss of generality, suppose $J\subset R(q)$ (if not, replace $q$ with $\overline{q}$), so that both components of $R(q)\setminus \overline J$ contain points of the core at $p$. Then we can find an element $h\in H_+(p)$ such that $h(q)$ and $h(\overline q)=\overline{h(q)}$ are contained in $R(q)\setminus \overline J$. Apply \reflem{containGapGeneral} to the non-simple points $h(q)$ and $q$ and the gap $J$: since $J$ and $\{h(q),\overline{h(q)}\}$ are contained in $R(q)$, the second possibility cannot occur, and thus we find a gap $J'$ of $h(q)$ that contains $J$. However, ${h(J)}$ is strictly contained in $J$, and $h(J)$ is also a gap of $h(q)$; therefore, we must have $h(J)=J'$. This provides the desired contradiction.
\end{proof}

\begin{proof}[Proof of \refthm{linkedCase}]
Without loss of generality, we can assume that $p$ and $q$ are non-simple points such that $p<q<\overline{p}<\overline{q}$. If $p$ and $q$ are not geometrically linked, then there is a pair $(h,f)\in H_+(p)\times H_+(q)$ which is not geometric, and thus $h$ and $f$ produce one of the four non-geometric configurations in the statement of \refprop{fixCommutator}. We detail the case \ref{Itm:nonGeometricII}, the others being similar: we assume by way of contradiction that $h$ and $f$ satisfy
\begin{equation}\label{Eqn:coincideQuadrant1}
q<h(q)<a([h,f^{-1}])<r([h,f^{-1}])<f^{-1}(\overline p)<\overline{p}.  
\end{equation}
Since $q<h(q)<f^{-1}(\overline{p})<\overline p$, we can
consider the largest open interval $I=\opi{s}{t}\subset \opi{q}{\overline p}$ such that $\Core(p)\cap  I=\Core(q)\cap I\neq \varnothing$.
After \reflem{containGap}, we have that either $q=s$, or $s\in \Core(p)\cap \Core(q)$ and there is a gap of $p$ whose closure contains $q$ and $s$; similarly, either $\overline p=t$, or $t\in \Core(p)\cap \Core(q)$ and there is a gap of $q$ whose closure contains $\overline p$ and $t$. Note that such an interval exists, since from \eqref{Eqn:coincideQuadrant1} we must have 
$q\le s<h(q)<f^{-1}(\overline p)<t\le \overline p$: indeed, otherwise, $q$ and $h(q)$ would be in the closure of the same gap of $p$, or $f^{-1}(\overline p)$ and $\overline p$ would be in the closure of the same gap of $q$, and both cases are not possible. In particular, we have $\opi{h(q)}{f^{-1}(\overline p)}\subset I$.

\begin{claim}\label{Clm:ifThereIsAGap}
    Assume that there is a gap $J$ of $p$ whose closure contains $h(q)$ and $f^{-1}(\overline p)$. Then $J$ is also a gap of $q$, and we have $q\in h^{-1}(J)$ and $\overline p\in f(J)$. Moreover, there is a left gap $L_q$ of $q$ (respectively, a left gap $L_p$ of $p$) whose closure intersects $p$ and $h^{-1}(J)$ (respectively, $\overline q$ and $f(J)$).
\end{claim}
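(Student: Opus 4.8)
The plan is to pin down the gap $J$ exactly, then to locate its translates $h^{-1}(J)$ and $f(J)$ by tracking endpoints, and finally to feed these translates back into \reflem{containGap} to extract the two left gaps. I would first verify that $J$ is a common gap. Write $\closure{J}=\cldi{a}{b}$ for its endpoints; from $\{h(q),f^{-1}(\overline p)\}\subset\closure{J}$ we get $a\le h(q)<f^{-1}(\overline p)\le b$, and in particular $\opi{h(q)}{f^{-1}(\overline p)}\subset J$. I would check that $J\subset I=\opi{s}{t}$: if we had $a<s$ with $s\neq q$, then the point $s\in\Core(p)$ would be interior to the gap $J$, which is absurd; in the borderline case $s=q$ the inequality $a\ge q=s$ instead follows from the fact (already used when constructing $I$) that $q$ and $h(q)$ cannot lie in the closure of a single gap of $p$. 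The symmetric argument at the right end gives $b\le t$. Once $J\subset I$, the coincidence $\Core(p)\cap I=\Core(q)\cap I$ forces $J\cap\Core(q)=J\cap\Core(p)=\varnothing$ while $\partial J\subset\Core(p)\cap I=\Core(q)\cap I\subset\Core(q)$; hence $J$ is also a gap of $q$, proving the first assertion.

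For the second assertion I would simply translate. Since $h\in\Stab{G}{p}$ preserves $\Core(p)$ and acts on $R(p)$ as an increasing translation, $h^{-1}(J)$ is again a gap of $p$, and $h^{-1}(h(q))=q$ is one of its endpoints; thus $q\in\closure{h^{-1}(J)}$, which is what $q\in h^{-1}(J)$ records (it is an interior point precisely when $h(q)$ is interior to $J$). Symmetrically, as $J$ is now a gap of $q$ and $f\in\Stab{G}{q}$, the set $f(J)$ is a gap of $q$ one of whose endpoints is $f(f^{-1}(\overline p))=\overline p$, so $\overline p\in\closure{f(J)}$.

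For the left gaps I would apply \reflem{containGap} to the translated gaps, exchanging the roles of $p$ and $q$ where needed. Applied to the gap $h^{-1}(J)$ of $p$: since $q\in\closure{h^{-1}(J)}\cap\{q,\overline q\}\neq\varnothing$, only the third alternative of the lemma can occur, and it supplies a gap $L_q$ of $q$ whose closure meets both $\{p,\overline p\}$ and $\partial(h^{-1}(J))$. The cyclic order $p<q<\overline p<\overline q$ then pins $L_q$ down: its endpoint in $\{p,\overline p\}$ must be $p$, since $\overline p\notin\closure{L(q)}$, and its endpoint in $\partial(h^{-1}(J))$ must be $q$, since the other boundary point $h^{-1}f^{-1}(\overline p)$ lies in $R(q)$; hence $L_q=\opi{p}{q}$ is a left gap of $q$ with $p\in\closure{L_q}$ and $q\in\closure{L_q}\cap\closure{h^{-1}(J)}$. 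The gap $L_p$ is produced in the mirror-symmetric way, applying \reflem{containGap} to the gap $f(J)$ of $q$ and using $\overline p\in\closure{f(J)}$, which forces $L_p=\opi{\overline p}{\overline q}$.

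The main obstacle I anticipate is precisely the identification in the last step: \reflem{containGap} only asserts the \emph{existence} of some gap whose closure meets the two prescribed finite sets, so before concluding one must rule out the competing right-side configurations (for instance a right gap of $q$ with closure meeting $\overline p$ and $h^{-1}f^{-1}(\overline p)$). This elimination is where the strict inequalities of \eqref{Eqn:coincideQuadrant1}, together with the exact positions of the translated endpoints $h^{-1}f^{-1}(\overline p)$ and $fh(q)$ inside $\opi{q}{\overline p}$, have to be used carefully. A secondary subtlety is confirming that $h(q)$ and $f^{-1}(\overline p)$ are genuinely interior to $J$, so that $q$ and $\overline p$ are interior to the translates rather than only boundary points; fortunately the downstream application of \reflem{containGap} needs only $q\in\closure{h^{-1}(J)}$ and $\overline p\in\closure{f(J)}$, so this point does not block the argument. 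The remainder is routine tracking of endpoints under the translations $h$ and $f$.
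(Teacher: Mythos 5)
Your handling of the first assertion ($J\subset I$, hence $J$ is also a gap of $q$) and of the soft containments $q\in\closure{h^{-1}(J)}$, $\overline p\in\closure{f(J)}$ matches the paper. The last step, however, which you yourself flag as the main obstacle, does not go through as written. You tacitly take $\partial(h^{-1}(J))=\{q,\,h^{-1}f^{-1}(\overline p)\}$, i.e.\ you treat $h(q)$ and $f^{-1}(\overline p)$ as the \emph{endpoints} of $J$, whereas the hypothesis only places them in $\closure{J}$ (you even allow $a\le h(q)$ and $f^{-1}(\overline p)\le b$ earlier in your own write-up). The paper first identifies the true endpoints, showing $q\neq s$, $t\neq\overline p$ and $J=\opi{f^{-1}(t)}{h(s)}$ by matching $h^{-1}(J)$ (resp.\ $f(J)$) with the gap of $p$ adjacent to $s$ (resp.\ the gap of $q$ adjacent to $t$) coming from the construction of $I$. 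This identification is what makes the elimination of competing configurations possible: it puts the right endpoint $s$ of $h^{-1}(J)$ in $\Core(q)$, so a gap of $q$ whose closure met $\overline p$ together with that endpoint would have to contain $s$, $q$, or $\overline q$, which is absurd. Your substitute argument (``the endpoint in $\partial(h^{-1}(J))$ must be $q$ since the other boundary point lies in $R(q)$'') is circular, since it presupposes that the gap produced by \reflem{containGap}\ref{Itm:containGap3} is a left gap --- which is precisely what needs proving --- and it does not rule out the right-gap configuration you mention.

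Moreover, your conclusion $L_q=\opi{p}{q}$ is false: no gap of $q$ can have $q$ in its closure, because the monotone maps $m_{q,R(q)}$ and $m_{q,L(q)}$ are surjective onto $\RR$, so $\Core(q)$ accumulates on $q$ from both sides (also, \reflem{containGap} only gives $p\in\closure{L_q}$, not that $p$ is an endpoint). This is not cosmetic: that very observation is how the paper upgrades $q\in\closure{h^{-1}(J)}$ to the strict containment $q\in h^{-1}(J)$ asserted in the claim --- if $q$ were an endpoint of $h^{-1}(J)$, the gap $L_q$ furnished by the lemma would have $q$ in its closure, which is impossible, whence $f^{-1}(t)\neq h(q)$. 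You defer this interiority as harmless, but it is part of the statement and is used downstream (e.g.\ ``$q\in R_p$'' and the choice of $k$ with $k(q)\in\opi{v_2}{h(u_2)}$), and it is obtained exactly from the endpoint identification and the $L_q$ argument that are missing from your proof.
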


\begin{proof}[Proof of the claim]
    Since $\opi{h(q)}{f^{-1}(\overline p)}\subset I$, the gap $J$ is contained in $I$ and it is also a gap of $q$. Note that $h^{-1}(J)$ is a gap of $p$ whose closure contains $q$, namely, $\opi{q}{h^{-1}f^{-1}(\overline{p})}\subset h^{-1}(J)$, and $f(J)$ is a gap of $q$ whose closure contains $\overline p$, namely, $\opi{fh(q)}{\overline{p}}\subset f(J)$. Therefore, we have $q\neq s$ and $t\neq \overline p$, and from \eqref{Eqn:coincideQuadrant1} we have
    \[
    q<s<f^{-1}(t)\le h(q)<f^{-1}(\overline p)\le h(s)<t<\overline p
    \]
    and $J=\opi{f^{-1}(t)}{h(s)}$. Also, we actually have $f^{-1}(t)\neq h(q)$ and $f^{-1}(\overline p)\neq h(s)$ because after \ref{Itm:containGap3} in \reflem{containGap}, there is a left gap $L_q$ of $q$ whose closure intersects $p$ and $h^{-1}(J)$, and similarly there is a left gap $L_p$ of $p$ whose closure intersects $\overline q$ and $f(J)$.  
\end{proof}

In fact, the key of the proof is to show the following.
\begin{goal*}
    For any non-geometric pair $(h,f)$ such that $[h,f]$ has fixed points in $\opi{q}{\overline p}$, there is a gap of $p$ whose closure contains $h(q)$ and $f^{-1}(\overline{p})$.
\end{goal*}
Let us see how this leads to the desired contradiction. 
After \refclm{ifThereIsAGap}, there are gaps $R_p$ and $L_p$ of $p$ (respectively, gaps $R_q$ and $L_q$ of $q$), with $q\in R_p$ and $\overline p\in R_q$ and $f^{-1}(R_q)=h(R_p)$, $\overline{R_p}\cap \overline{L_q}\neq\varnothing$, $\overline{R_q}\cap \overline{L_p}\neq\varnothing$. Writing $R_p=\opi{u_2}{v_2}$, take any $k\in H_+(p)$ such that $k(q)\in \opi{v_2}{h(u_2)}$. With this choice, we have that the pair $(k,f)$ is non-geometric, and there is no gap of $p$ whose closure contains $k(q)$ and $f^{-1}(\overline p)$. Therefore, after the Main Claim, the commutator $[k,f]$ cannot have fixed points in $\opi{p}{\overline q}$. Moreover, since $\opi{p}{q}\subset R_p\cup \overline{L_q}$, we must have $f^{-1}(p)\in R_p$ and $k^{-1}(q)\in L_q$, and consequently $p<k^{-1}(q)<f^{-1}(p)<q$. After \refprop{fixCommutator}, $[k,f]$ cannot have fixed points in $\opi{p}{q}$. A similar argument shows that $[k,f]$ cannot have fixed points in $\opi{\overline p}{\overline q}$. After \refprop{fixCommutator}, the fixed points of $[k,f]$ must be described by \ref{Itm:nonGeometricIV} in \refprop{fixCommutator}: we have $\Fix([k,f])\subset \opi{\overline q}{p}$. However, applying the Main Claim to the elements $k$ and $f$, we see that there is a gap of $p$ whose closure contains $k^{-1}(\overline q)$ and $f(p)$. However, this can hold for at most one such $k\in H_+(p)$, but there are infinitely many of them, because $p$ is non-simple. This gives the desired contradiction.

From now on until the end of the proof, we assume that there is no gap of $p$ whose closure contains $h(q)$ and $f^{-1}(\overline{p})$, and argue for a contradiction.

\begin{claim}\label{Clm:coincideQuadrant1}
    There is no gap of $p$ whose closure contains the points $h(s)$ and $f^{-1}(t)$. Moreover, we have
    \[q\le s<h(q)\le h(s)<f^{-1}(t)\le f^{-1}(\overline p)<t\le \overline p.\]
    When $t\neq \overline p$, there exists a gap of $p$ whose closure contains $f^{-1}(t)$ and $f^{-1}(\overline p)$.
\end{claim}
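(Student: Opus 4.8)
The plan is to establish Claim~\ref{Clm:coincideQuadrant1} in three stages, exploiting the interval $I=\opi{s}{t}$ on which the cores of $p$ and $q$ coincide, together with the structural dichotomy of \reflem{containGap}. The guiding principle is that all four points $h(q)$, $h(s)$, $f^{-1}(t)$, $f^{-1}(\overline p)$ lie inside $\closure I$, so that their relative positions are governed by where they meet the common core $\Core(p)\cap I=\Core(q)\cap I$.

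First I would prove the non-existence statement. Suppose, for contradiction, that there is a gap $K$ of $p$ whose closure contains both $h(s)$ and $f^{-1}(t)$. Since $\opi{h(q)}{f^{-1}(\overline p)}\subset I$ and the endpoints $s,t$ are (by the analysis following \eqref{Eqn:coincideQuadrant1}) either the points $q,\overline p$ themselves or boundary points of gaps bridging into $I$, the points $h(s)$ and $f^{-1}(t)$ are forced to straddle the middle of $I$; a gap of $p$ containing both in its closure would then have to contain the subinterval $\opi{h(q)}{f^{-1}(\overline p)}$, hence contain $h(q)$ and $f^{-1}(\overline p)$ in its closure as well — contradicting the standing hypothesis (active from the paragraph ``From now on until the end of the proof'') that no gap of $p$ has $h(q)$ and $f^{-1}(\overline p)$ in its closure. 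The main obstacle here, and the most delicate point of the whole claim, is pinning down that $h(s),f^{-1}(t)$ genuinely sit \emph{inside} the span $\opi{h(q)}{f^{-1}(\overline p)}$ rather than outside it; this is exactly what the chain of inequalities in the claim records, so the first two stages are intertwined and must be run together.

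Next I would establish the displayed inequality $q\le s<h(q)\le h(s)<f^{-1}(t)\le f^{-1}(\overline p)<t\le \overline p$. The outer inequalities $q\le s$ and $t\le \overline p$ are the definition of $I$, and $h(q)<f^{-1}(\overline p)$ together with $\opi{h(q)}{f^{-1}(\overline p)}\subset I$ was already noted before the claim. Since $h\in H_+(p)$ and $f\in H_+(q)$ act as positive translations on the relevant sides (by \refthm{elementary} and the orientation convention), and since $s\le q$ on the left while $t\ge \overline p$ on the right, monotonicity of $h$ gives $h(s)\ge h(q)$, while monotonicity of $f^{-1}$ gives $f^{-1}(t)\le f^{-1}(\overline p)$. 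The one genuinely new inequality is the middle strict one $h(s)<f^{-1}(t)$: if it failed, the two translated core-points would overlap, forcing $h(s)$ and $f^{-1}(t)$ into the closure of a common gap of $p$, which is precisely what the first stage rules out. Thus the first and second statements reinforce each other and are proved in tandem.

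Finally, for the statement when $t\neq\overline p$, I would invoke the last alternative \ref{Itm:containGap3} of \reflem{containGap}: by the description of $t$ recalled before the claim, $t\neq\overline p$ means $t\in\Core(p)\cap\Core(q)$ with a gap of $q$ whose closure contains $\overline p$ and $t$, so that $\opi{f^{-1}(t)}{f^{-1}(\overline p)}=f^{-1}\opi{t}{\overline p}$ is the $f^{-1}$-image of (a subinterval adjacent to) that gap of $q$ lying on the right side $R(q)$. Applying \reflem{containGap} to this gap transported to $p$, together with the established strict inequality $f^{-1}(t)\le f^{-1}(\overline p)$ and the non-existence clause just proved, forces the interval $\opi{f^{-1}(t)}{f^{-1}(\overline p)}$ to be wandering for $p$ and hence contained in a single gap of $p$, whose closure then contains both endpoints $f^{-1}(t)$ and $f^{-1}(\overline p)$. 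I expect the transport-of-gaps bookkeeping in this last step to be the most error-prone, since one must track carefully which side of $q$ (hence of $p$) each image interval lands on, but the combinatorial input is entirely supplied by \reflem{containGap} and no new dynamical argument is needed.
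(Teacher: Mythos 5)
Your overall plan (work inside $I$, use that the cores of $p$ and $q$ coincide there, and transport the boundary gaps by $h$ and $f^{-1}$) points in the right direction, but the central inference of your first stage is backwards, and this breaks both the non-existence assertion and the strict inequality $h(s)<f^{-1}(t)$. Since $q\le s$ and $h\in H_+(p)$ is order-preserving on $R(p)$, and since $t\le \overline p$ and $f^{-1}$ is order-preserving on $R(q)$, one has $h(q)\le h(s)$ and $f^{-1}(t)\le f^{-1}(\overline p)$ --- as you yourself record in your second stage. Hence $\opi{h(s)}{f^{-1}(t)}$ is contained in $\opi{h(q)}{f^{-1}(\overline p)}$, not the other way around: a gap of $p$ whose closure contains the \emph{inner} pair $h(s)$, $f^{-1}(t)$ (when these are core points it would be exactly the interval $\opi{h(s)}{f^{-1}(t)}$) need not contain $h(q)$ or $f^{-1}(\overline p)$ in its closure, so the standing hypothesis is not contradicted and your first stage collapses. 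Likewise, your claim in the second stage that $h(s)\ge f^{-1}(t)$ would ``force $h(s)$ and $f^{-1}(t)$ into the closure of a common gap of $p$'' is not justified by anything you have set up: two core points in the wrong order are not automatically adjacent to a common gap.

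What is actually needed is to exhibit two specific gaps of $p$ flanking the configuration: when $q\neq s$, the gap $h(J)$, where $J$ is the gap of $p$ bridging $q$ to $s$, whose closure contains $h(q)$ and has $h(s)$ as its right endpoint; and when $t\neq\overline p$, the interval $f^{-1}(K)$, where $K$ is the gap of $q$ bridging $t$ to $\overline p$. One must then verify that $f^{-1}(K)$ is a gap of $p$: it is a gap of $q$ that cannot sit inside $J$ by \eqref{Eqn:coincideQuadrant1}, hence lands in $I$ where the cores agree (equivalently, alternative (2) of \reflem{containGap} is excluded because such a gap of $p$ would contain $q$ and $f^{-1}(\overline p)$, hence also $h(q)$, in its closure, against the standing hypothesis). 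With both gaps in hand, the standing hypothesis forces $h(J)$ and $f^{-1}(K)$ to be disjoint (otherwise they coincide and their common closure contains $h(q)$ and $f^{-1}(\overline p)$), and disjointness together with $h(q)<f^{-1}(\overline p)$ yields $h(s)<f^{-1}(t)$ and rules out any third gap of $p$ joining $h(s)$ to $f^{-1}(t)$, since $h(s)$ and $f^{-1}(t)$ are already one-sided endpoints of gaps and core points are not isolated. Your third stage essentially reconstructs $f^{-1}(K)$, but it arrives too late and is used only for the final assertion; without $h(J)$ and the disjointness argument the first two assertions of the claim remain unproved.
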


\begin{proof}[Proof of the claim]
    When $q=s$ and $\overline p=t$, this is a direct consequence of \eqref{Eqn:coincideQuadrant1} and the assumption that there is no gap of $p$ whose closure contains $h(q)$ and $f^{-1}(\overline p)$. Assume $q\neq s$, and let $J$ be the gap of $p$ whose closure contains $q$ and $s$. Then $h(J)$ is a gap of $p$, whose closure is contained in $\opi{s}{\overline p}$. After \eqref{Eqn:coincideQuadrant1} and the assumption, we must have $f^{-1}(\overline p)\in \opi{h(s)}{\overline p}$. Let us assume $t\neq\overline p$, otherwise there is nothing more to prove. Let $K$ be the gap of $q$ whose closure contains $t$ and $\overline p$; 
    then $f^{-1}(K)$ is a gap of $q$, whose closure is contained in $\opi{q}{t}$. Note that we cannot have $f^{-1}(K)\subset J$, because otherwise we get a contradiction with condition \eqref{Eqn:coincideQuadrant1}. Hence $f^{-1}(K)\subset \opi{s}{t}=I$, and therefore $f^{-1}(K)$ is also a gap of $p$. After the assumption, the gaps $h(J)$ and $f^{-1}(K)$, containing respectively $h(q)$ and $f^{-1}(\overline p)$, are disjoint, and thus condition \eqref{Eqn:coincideQuadrant1} yields the desired inequalities.
\end{proof}

Take a right monotone map $m\colon R(p)\to \RR$ such that $m(q)=m(s)=0$, and choose a right representation $\theta$ at $p$. For convenience, we write $r=f^{-1}(t)$ and $m(r)=m(f^{-1}(\overline p))=:\vartheta$. 
Consider the map $\hat f\colon [0,\vartheta) \to \RR$ defined as
$\hat f(m(x))=m(f(x))$ for all $x\in \ropi{s}{r}$. From the condition $I\cap \Core(p)=I\cap \Core(q)$, we see that $\hat f$ is well defined.
The next key argument is to show that $\hat{f}$ increases strictly faster than any positive translation on the image of $m$. For instance, see \reffig{fHatGraph} for a sketch of $\hat{f}$ when $\vartheta$ is finite and $t=\bar{p}$.

\begin{figure}[ht]
    \centering
    \includegraphics[width=0.5\linewidth]{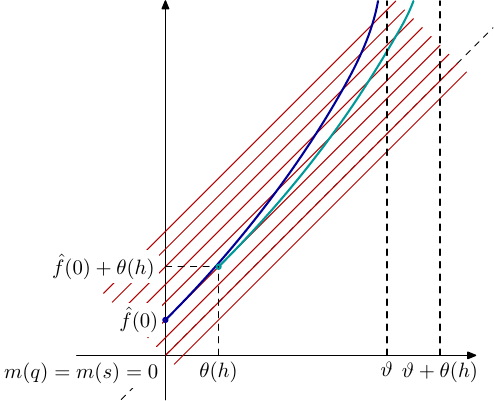}
    \caption{A sketch of  $\hat{f}$ for finite $\vartheta$ and $t=\overline{p}$: the red lines are the graphs of elements of $\Stab{G}{p}$. The dark blue line is the graph of $\hat{f}$ and the light blue curve is its conjugation by some element $h$ in $H_+(p)$.} 
    \label{Fig:fHatGraph}
    \end{figure}

\begin{claim}\label{Clm:tStrictMonotone}
    The function $\varphi\colon[0,\vartheta)\to \RR$ defined by $\varphi(z)=\hat{f}(z)-z$ is strictly increasing.
\end{claim}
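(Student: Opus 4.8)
My plan is to detect the deviation of $\hat f$ from genuine translations through the fixed points of the compositions $k^{-1}f$, with $k\in H_+(p)$, and to feed these into the configuration analysis of the appendix. The basic observation is that, in the coordinate $m$ on the right side $R(p)$, the element $k^{-1}f\in G$ acts by $z\mapsto \hat f(z)-\theta(k)$: indeed $m\big((k^{-1}f)(x)\big)=m(f(x))-\theta(k)=\hat f(m(x))-\theta(k)$ for $x$ in the relevant subarc of $I$. Consequently a value $z\in[0,\vartheta)$ satisfies $\varphi(z)=\theta(k)$ if and only if the corresponding point of $I$ is fixed by $k^{-1}f$; a value $z$ lying in a gap of $m$ still produces a genuine fixed point of $k^{-1}f$ inside the associated gap of $p$, since $k^{-1}f$ then preserves that gap. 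Thus, for each $k$, the level set $\{z\in[0,\vartheta):\varphi(z)=\theta(k)\}$ injects into the set of fixed points of $k^{-1}f$ lying in $R(p)$ (distinct $m$-values come from distinct fibres of $m$, hence give distinct fixed points).

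Now $k^{-1}f$ is a non-trivial element of the hyperbolic-like group $G$, so it has exactly two fixed points, and the relative position of $a(k^{-1})=p$, $r(k^{-1})=\overline p$, $r(f)=q$, $a(f)=\overline q$ is the linked one $p<q<\overline p<\overline q$, independently of $k$. The configuration analysis for compositions (\refprop{configFix}), exactly as used in the linked case in the proof of \reflem{containGapGeneral}, then forces the two fixed points of $k^{-1}f$ to lie on opposite sides of $\{p,\overline p\}$; in particular at most one of them lies in $R(p)$. Combined with the previous paragraph, this shows that for every $k\in H_+(p)$ the level set $\{z\in[0,\vartheta):\varphi(z)=\theta(k)\}$ contains at most one point.

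Since $p$ is non-simple, $\theta(H_+(p))$ is dense in $(0,\infty)$; and as $\varphi$ is continuous, the fact that it assumes each value of this dense set at most once forces $\varphi$ to be injective. Indeed, if $\varphi(z_1)=\varphi(z_2)$ with $z_1<z_2$ and $\varphi$ is non-constant on $[z_1,z_2]$, a strict interior extremum would make a whole subinterval of values (meeting the dense set $\theta(H_+(p))$) be attained twice. A continuous injective function on an interval is strictly monotone, and the direction is pinned down by the boundary behaviour: in the principal case $t=\overline p$ one has $\hat f(z)\to+\infty$ while $z<\vartheta<\infty$, so $\varphi\to+\infty$ and $\varphi$ must be increasing; when $s=q$ one has $\varphi(0)=m(f(q))=m(q)=0<\varphi(z)$ for $z>0$, with the same conclusion. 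The remaining cases $t\neq\overline p$ (resp. $s\neq q$) follow in the same way, using the inequalities and the auxiliary gap supplied by \refclm{coincideQuadrant1}.

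The step I expect to be delicate is twofold. First, the configuration input: one must verify that the composition $k^{-1}f$ really falls under the linked diagram of \refprop{configFix} with the straddling conclusion, uniformly in $k$, and that the translation of ``fixed point in $R(p)$'' through the non-injective map $m$ is exact across the gaps of $p$. Second, and more seriously, the passage from ``each value attained at most once'' to honest strict monotonicity must exclude a constant stretch of $\varphi$: such a stretch would mean that $f$ agrees with a $p$-translation of slope $v$ on a subarc, making $k^{-1}f$ the identity on an open set (hence contradicting hyperbolic-likeness) when $v\in\theta(H_+(p))$; one has to argue, via density of $\theta$ and the fact that $f$ preserves both the $m$-length and the length for the monotone map $n$ linearizing $\Stab{G}{q}$ on $R(q)$ along its orbits, that this degeneration cannot be evaded even when $v$ is incommensurable with $\theta(H_+(p))$. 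By contrast, the direction of monotonicity is a soft consequence of the boundary values and is not where the difficulty lies.
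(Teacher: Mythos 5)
Your core mechanism is the same as the paper's: you convert the equation $\varphi(z)=\theta(k)$ into a fixed point of $k^{-1}f$ lying in $R(p)$, and you invoke the linked-case diagrams of \refprop{configFix} to see that $k^{-1}f$ has exactly one fixed point on that side of $\{p,\overline p\}$; this correctly shows that each level set $\varphi^{-1}(\theta(k))$ is at most a singleton, and your intermediate-value argument then rules out any non-monotone oscillation, since an interior extremum would force some value from the dense set $\theta(H_+(p))$ to be attained twice. Up to repackaging (the paper picks explicit points $v_1<w<v_2$ rather than speaking of level sets), this is the paper's injectivity argument.

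The genuine gap is exactly the point you flag and then leave unresolved: excluding an interval on which $\varphi$ is constant, say equal to $v$. Your level-set criterion is silent when $v\notin\theta(H_+(p))$, and a flat stretch flanked by strictly increasing behaviour on both sides is invisible to the ``attained twice'' test, since every value other than $v$ may still be attained only once; so this case must be handled, and your sketch via a second monotone map $n$ linearizing $\Stab{G}{q}$ is not an argument. The paper's fix is short: if $\hat f$ coincides with the translation by $v$ on an interval $C$ of non-empty interior, then for every $k\in\Stab{G}{p}$ with $\theta(k)$ sufficiently small, $\hat f$ and the translation by $\theta(k)$ commute on a subinterval of $C$ of non-empty interior, whence the commutator $[f,k]$ fixes infinitely many points, contradicting hyperbolic-likeness; no commensurability between $v$ and $\theta(H_+(p))$ is needed. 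A second, smaller gap: your determination of the direction of monotonicity from boundary behaviour works when $t=\overline p$ (where $\varphi\to+\infty$) or $s=q$ (where $\varphi(0)=0$), but when $s\neq q$ and $t\neq\overline p$ the comparison of $\varphi(0)=m(f(s))$ with $\varphi(\vartheta^-)$ is itself an instance of what is to be proved, and \refclm{coincideQuadrant1} does not obviously decide it. The paper sidesteps this by choosing $k$ with $w=r(k^{-1}f)\in\opi{s}{r}$ and checking $\varphi\le\theta(k)$ to the left of $m(w)$ and $\varphi\ge\theta(k)$ to the right, which pins down the direction uniformly.
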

\begin{proof}[Proof of the claim]
    To begin with, observe that $\varphi$ is continuous and that $\varphi(z)>0$ for every $z\in (0,\vartheta)$.
    We first prove that $\varphi$ is nowhere locally constant. For this, assume by contradiction that $\varphi$ is constant on some interval $C\subset (0,\vartheta)$ of non-empty interior. This means that $\hat{f}$ coincides with a translation on $C$. Then, for any $k\in \Stab{G}{p}$ such that $\theta(k)$ is sufficiently close to $0$, we have that $\hat{f}$ and the translation by $\theta(k)$ commute on some subinterval of $C$ of non-empty interior. This implies that the commutator $[f,k]$ has infinitely many fixed points, providing the desired contradiction.
    
    So, if $\varphi$ is not injective, we can find points $z_1<w<z_2$ in $(0,\vartheta)$ such that $\varphi(z_1)=\varphi(z_2)$ and $\varphi(w)\neq \varphi(z_1)$. If $\varphi(w)<\varphi(z_1)$ (respectively, if $\varphi(w)>\varphi(z_1)$), since $p$ is non-simple, we can take an element $k$ in $H_+(p)$ such that $\varphi(w)<\theta(k)<\varphi(z_1)$ (respectively, such that $\varphi(w)>\theta(k)>\varphi(z_1)$). On the other hand, by continuity of $\varphi$, there are points $v_1$ and $v_2$ such that $z_1<v_1<w<v_2<z_2$ and $\varphi(v_1)=\varphi(v_2)=\theta(k)$.
    For each $i\in \{1,2\}$, consider the preimage $V_i=m^{-1}(v_i)$, which is a closed interval; we have
    \[m(f(V_i))=\hat{f}(v_i)=v_i+\varphi(v_i)=v_i+\theta(k)=m(k(V_i)).\]
    Since the gaps of $p$ and $q$ in $I$ are the same, this implies that $f(V_i)=k(V_i)$, and thus $k^{-1}f$ fixes the endpoints of $V_i$. Since $k^{-1}f$ has only two fixed points, we deduce that $\Fix(k^{-1}f)\subset \opi{s}{r}$. However, \refprop{configFix} gives $\Fix(k^{-1}f)\cap \opi{\overline q}{p}\neq\varnothing$, and this provides the desired contradiction.

    Now, let us prove that $\varphi$ is strictly increasing. Since it is injective, it suffices to find two points $u,v\in \opi{s}{r}$ such that $m(u) < m(v)$ and $\varphi(m(u))\le \varphi(m(v))$. For this, as above, we note that for any $k\in H_+(p)$, the composition $k^{-1}f$ satisfies $\Fix(k^{-1}f)\cap \opi{\overline q}{p}=\{r(k^{-1}f)\}$ (\refprop{configFix}). As $p$ is non-simple, we can find $k\in H_+(p)$ such that $r(k^{-1}f)=:w$ is in the interval $\opi{s}{r}$ (otherwise, we would have that $\opi{s}{r}$ is contained in a gap of $p$, but this is not the case). As above, we remark that we necessarily have that $m^{-1}(w)$ is a singleton, because otherwise $k^{-1}f$ would preserve a non-trivial interval in $\opi{q}{\overline p}$.
    Now, for any $u\in \opi{s}{w}$, we have $q<f(u)<k(u)<\overline p$ and consequently $m(f(u))\le m(u)+\theta(k)$, or equivalently, $\varphi(m(u))\le \theta(k)$. Similarly, for any $v\in \opi{w}{r}$, we have $\varphi(m(v))\ge \theta(k)$. This gives $\varphi(m(u))\le \varphi(m(v))$. Finally, since $m^{-1}(w)$ is a singleton, we have $m(u)< m(w)< m(v)$. This shows that $t$ is strictly increasing.
\end{proof}

Now, write $w=a([h,f^{-1}])$ for the attractor of $[h,f^{-1}]$, and remember that from \eqref{Eqn:coincideQuadrant1} we have $w\in \opi{h(q)}{f^{-1}(\overline p)}$. Note that $w$ satisfies $f(w)=hfh^{-1}(w)$.

Let us first check that $w\notin \lopi{h(q)}{h(s)}$. When $s=q$ there is nothing to check, and thus we assume $s\neq q$ and, as in the proof of \refclm{coincideQuadrant1}, we let $J$ be the gap of $q$ whose closure contains $q$ and $s$. We will prove that $\Fix([h,f^{-1}])\cap \overline{h(J)}=\varnothing$. On the one hand, after \refclm{coincideQuadrant1}, the image $fh(J)$ is a gap of $p$ and $q$ contained in $I$, and we have $m(h(s))=\theta(h)\in (0,\vartheta)$, and thus $m(fh(s))=\hat f(\theta(h))$. On the other hand, the image $hf(s)$ is in $R(p)$, and we have $m(hf(s))=\theta(h)+m(f(s))=\theta(h)+\hat f(0)$. After \refclm{tStrictMonotone}, we have 
\[
m(fh(s))=\hat f(\theta(h))>\theta(h)+\hat f(0)=m(hf(s)),
\]
and consequently ${fh(J)}$ and ${hf(J)}$ are disjoint. This easily gives that the intervals $[h,f^{-1}]({h(J)})$ and ${h(J)}$ are disjoint, and thus $\Fix([h,f^{-1}])\cap \overline{h(J)}=\varnothing$.

Next, let us argue by contradiction that $w\notin \lopi{h(s)}{r}$. Note that the case $w=r$ is possible only when $t\neq \overline p$, and that in this case the value $\vartheta=m(r)$ is finite, so that the definitions of the maps $\hat f$ and $\varphi$ can be extended to $[0,\vartheta]$.
Now, if $w\in \lopi{h(s)}{r}$, then \eqref{Eqn:coincideQuadrant1} gives
$m(w)\in (\theta(h),\vartheta]$.
For simplicity, we write $\mu=m(w)$. The relation $f(w)=hfh^{-1}(w)$ yields
\[
\hat{f}(\mu)=\hat{f}(\mu-\theta(h))+\theta(h)=\mu+\varphi(\mu-\theta(h)),
\]
and thus $\varphi(\mu)=\varphi(\mu-\theta(h))$. Consequently, the function $\varphi$ is not injective, and after \refclm{tStrictMonotone} this provides the desired contradiction.

Finally, let us check that if $t\neq \overline p$, then $w\notin \opi{r}{f^{-1}(\overline p)}$. We argue by way of contradiction. As in the proof of \refclm{coincideQuadrant1}, let $K$ be the gap of $q$ whose closure contains $t$ and $\overline p$, so that $K':=f^{-1}(K)$ is a gap of $p$ and $q$ whose leftmost point is $r$. We want to prove that $[f^{-1},h](K')\cap K'=\varnothing$, or equivalently, that $hfh^{-1}(K')\cap K=\varnothing$. For this, note first that the point $h^{-1}(r)$ is in $I=\opi{s}{r}$ because $m(h^{-1}(r))=\vartheta-\theta(h)>0$. Hence $h^{-1}(K')$ is contained in $\opi{s}{r}$ and so it is a gap of $p$ and $q$. Applying $f$, the image $fh^{-1}(K')$ is contained in $\opi{s}{t}$ and so it is a gap of $p$ and $q$. Finally, $hfh^{-1}(K')$ is a gap of $p$. We conclude that if $hfh^{-1}(K')\cap K\neq \varnothing$, then $hfh^{-1}(\overline{K'})\subset K$, or equivalently, $[f^{-1},h](\overline{K'})\subset K'$. However, this contradicts the condition
\[w=a([h,f^{-1}])=r([f^{-1},h])\in \opi{r}{a([f^{-1},h])}\subset K',\]
that one easily gets from \eqref{Eqn:coincideQuadrant1} and $w\in \opi{r}{f^{-1}(\overline p)}$. This concludes the proof.
\end{proof}

\setcounter{claim}{0}

As announced before, we explain now how to define a proper ping-pong partition for the stabilizers of (geometrically) linked non-simple points.

\begin{cor}\label{Cor:pingPongLinked}
    Let $G$ be a hyperbolic-like group. Assume that $p$ and $q$ are linked non-simple points. Then, for any $\alpha\in \{p,\overline p\}$ (respectively, for any $\alpha\in \{q,\overline q\}$) there is a gap $I_\alpha$ of $q$ (respectively, of $p$) such that $\alpha\in I_\alpha$. Moreover, we have $S^1=\overline{I_p\cup I_{\overline p}\cup I_q\cup I_{\overline q}}$. In particular, setting
    \[U_p=(I_p\cup I_{\overline p})\setminus \overline{I_q\cup I_{\overline q}}\quad\text{and}\quad U_q=(I_q\cup I_{\overline q})\setminus \overline{I_p\cup I_{\overline p}},\]
    we have that $(U_p,U_q)$ is a proper ping-pong partition for $\Stab{G}{p}$ and $\Stab{G}{q}$.
\end{cor}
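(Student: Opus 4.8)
The plan is to leverage Theorem~\ref{Thm:linkedCase}, which tells us that $p$ and $q$ are geometrically linked, so that the explicit inequality chains in the definition of a geometric pair control all the relevant fixed-point configurations. First I would establish the existence of the gaps $I_\alpha$: since $p$ and $q$ are linked non-simple points, \refprop{gapFromLimitSet} guarantees that each of $p, \overline p, q, \overline q$ has gaps on both sides. The key point is that $p$ lies in the interior of a gap of $q$ (and symmetrically): because $p\notin\{q,\overline q\}$ and the pairs are linked, $p$ sits strictly inside one of the two arcs cut out by $\{q,\overline q\}$, and \reflem{containGap} (together with \refprop{distinctCore}, which ensures $\Core(p)\neq\Core(q)$ so that $p$ cannot be accumulated by $\Core(q)$) forces $p$ to fall into an actual gap $I_p$ of $q$ rather than on its boundary. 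I would run the same argument for the other three points to produce $I_{\overline p}, I_q, I_{\overline q}$.

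Next I would prove the covering identity $S^1=\overline{I_p\cup I_{\overline p}\cup I_q\cup I_{\overline q}}$. The four gaps $I_p, I_{\overline p}$ (gaps of $q$) and $I_q, I_{\overline q}$ (gaps of $p$) are arranged around the circle in the cyclic order $p<q<\overline p<\overline q$ dictated by linkedness. Each arc of $S^1$ between two consecutive distinguished points — say $\opi{p}{q}$ — is a sub-interval of $R(p)\cap R(q)$ (up to relabeling), and by \reflem{containGap} any gap of $q$ touching this region is contained in a gap of $p$ or vice versa; combined with geometric linkedness, the gaps $I_p$ and $I_q$ overlap or abut so that their union, together with the boundary points, exhausts the arc. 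Concretely, the companion-point structure forces the right endpoint of $I_p$ and the left endpoint of $I_q$ to coincide or to be separated only by a set with empty interior, so the closures cover. Carrying this out on each of the four arcs gives the full circle.

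With the covering established, the ping-pong inclusions follow from the positivity of the translation-number action. By definition $U_p=(I_p\cup I_{\overline p})\setminus\overline{I_q\cup I_{\overline q}}$ is non-empty, open, with finitely many components, and disjoint from $U_q$. To verify $(\Stab{G}{p}\setminus\{\id\})(U_q)\subset U_p$: any nontrivial $h\in\Stab{G}{p}$ lies in $H_+(p)$ or $H_-(p)$ and acts on each side $R(p),L(p)$ as a positive (resp.\ negative) translation in the coordinate $m_{p,I}$. Since $U_q$ is contained in the region strictly between the gaps $I_p$ and $I_{\overline p}$, i.e.\ bounded away from $p$ and $\overline p$, applying $h$ pushes $U_q$ monotonically toward the attracting fixed point, landing it inside one of $I_p, I_{\overline p}$ and outside $\overline{I_q\cup I_{\overline q}}$ — here the geometric-linkedness inequalities (e.g.\ $p<h^{-1}(q)<\cdots<f^{-1}(p)<q$) are exactly what guarantees that the image clears the $q$-gaps. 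The symmetric statement for $\Stab{G}{q}$ is proved identically by exchanging the roles of $p$ and $q$.

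The main obstacle I anticipate is the boundary bookkeeping in the covering step and the corresponding edge cases where a gap of $p$ shares an endpoint with a gap of $q$ (the third alternative in \reflem{containGap}). One must argue carefully that removing the closure $\overline{I_q\cup I_{\overline q}}$ from $I_p\cup I_{\overline p}$ leaves a non-empty open set containing $p$ and $\overline p$, and that the resulting $U_p, U_q$ still cover enough of the circle for the ping-pong images to land correctly; the geometric-linkedness ordering is the tool that rules out the degenerate configurations where a single gap of $p$ would swallow both $q$ and $\overline q$ (which linkedness prohibits). Verifying that $U_p$ and $U_q$ have finitely many components reduces to observing that each is a finite union of sub-arcs of the four gaps, which is immediate once the endpoints are identified.
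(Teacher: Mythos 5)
Your overall architecture matches the paper's (first produce the four gaps $I_\alpha$, then prove the covering $S^1=\overline{I_p\cup I_{\overline p}\cup I_q\cup I_{\overline q}}$, then read off the ping-pong inclusions), but two of the three steps are not actually established. First, gap existence: the deduction ``\refprop{distinctCore} gives $\Core(p)\neq\Core(q)$, so $p$ cannot be accumulated by $\Core(q)$'' is a non-sequitur --- two cores being distinct closed sets does not prevent $p$ from lying in $\Core(q)$, and neither \reflem{containGap} nor \refprop{gapFromLimitSet} rules this out (the latter only yields that \emph{some} gap of $q$ exists on the relevant side, not that $p$ sits inside one). The paper's argument is genuinely different: it uses \refthm{linkedCase} quantitatively, reading off from the geometric-linkedness inequalities that $\sup_{h\in H_+(p)}h(\overline q)\le\inf_{f\in H_+(q)}f(\overline p)$ and $\sup_{f\in H_+(q)}f^{-1}(\overline p)\le\inf_{h\in H_+(p)}h(q)$, so that $\overline p$ is isolated in its $\Stab{G}{q}$-orbit and hence not in $\Core(q)$ (the core being a minimal invariant set without isolated points). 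You invoke these inequalities only later and for a different purpose.

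Second, the covering. The claim that ``the companion-point structure forces the right endpoint of $I_p$ and the left endpoint of $I_q$ to coincide or to be separated only by a set with empty interior'' is precisely the statement to be proved, and no argument is given. The configuration that must be excluded --- and which \reflem{containGap} explicitly allows --- is a nontrivial interval between, say, the right endpoint $v_q$ of $I_q$ and the left endpoint $u_{\overline p}$ of $I_{\overline p}$ on which $\Core(p)$ and $\Core(q)$ coincide and are nonempty; in that case the four closures would not cover $S^1$. The paper excludes it by a substantive argument: choosing $h\in H_+(p)$ and $f\in H_+(q)$ so that $h(I_q)$ and $fh(I_q)$ land in that interval, and showing that the commutator $[f^{-1},h^{-1}]$ would then displace $\opi{h^{-1}(q)}{f^{-1}(p)}$ off itself, contradicting \refprop{fixCommutator}, which locates its fixed points there. (One could alternatively identify $u_{\overline p}=\sup_f f^{-1}(\overline p)$ and $v_q=\inf_h h(q)$ and conclude from the pointwise inequality $f^{-1}(\overline p)<h(q)$ in the definition of a geometric pair; but either way an actual argument is needed, and you supply neither.) You correctly single this step out as the main obstacle; it is indeed the crux, and it remains open in your write-up. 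Granting the covering, your final verification of the ping-pong inclusions is correct and agrees with the paper's.
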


\begin{proof}
    Up to replace $p$ with $\overline p$, we can assume that the linked points are ordered as $p<q<\overline p<\overline q$. 
    Let us show that $\Core(q)$ does not accumulate on $\overline p$. After \refthm{linkedCase}, $p$ and $q$ are geometrically linked, and thus we have
    \[
        \overline p<\sup_{h\in H_+(p)}h(\overline q)\le \inf_{f\in H_+(q)}f(\overline p)<\overline q,
    \]
    where the supremum and the infimum are taken with respect to the linear order on $\opi{\overline p}{\overline q}$ induced by the circular order. Similarly, we have
    \[
        q<\sup_{f\in H_+(q)}f^{-1}(\overline p)\le \inf_{h\in H_+(p)}h(q)<\overline p.
    \]
    This shows that $\overline p$ is isolated in the $\Stab{G}{q}$-orbit of $\overline p$, and consequently $\overline p\notin \Core(q)$. This gives a gap $I_p$ of $q$ such that $p\in I_p$. Adapting this argument, one finds the other gaps $I_{\alpha}$ such that $\alpha\in I_\alpha$, for $\alpha\in \{\overline p,q,\overline q\}$.
    Let us write $I_\alpha=\opi{u_\alpha}{v_\alpha}$. We want to show that we have
    \begin{equation}\label{Eqn:pingPongLinked}
        p<u_q\le v_p<q<u_{\overline p}\le v_q\le \overline p<u_{\overline q}\le v_{\overline p}<\overline q<u_p\le v_{\overline q}<p.
    \end{equation}
    Indeed, this gives $S^1=\overline{I_p\cup I_{\overline p}\cup I_q\cup I_{\overline q}}$.

    Arguing for a contradiction, we assume that \eqref{Eqn:pingPongLinked} does not hold. We will discuss the case where $q<v_q<u_{\overline p}<\overline p$; the other cases can be treated similarly.
    By \reflem{containGap}, $J=\opi{v_q}{u_{\overline p}}$ is the largest open interval in $\opi{q}{\overline{p}}$ such that $\Core(p)\cap J= \Core(q)\cap J\neq\varnothing$. Since $p$ and $q$ are non-simple, we can take $h\in H_+(p)$ such that $h(I_q)\subset J$, and then take $f\in H_+(q)$ such that $fh(I_q)\subset J$. Note that $h(I_q)$ and $fh(I_q)$ are both gaps of $p$ and $q$. By \refprop{fixCommutator}, we have $\Fix([f^{-1},h^{-1}])\subset \opi{h^{-1}(q)}{f^{-1}(p)}\subset \opi{p}{q}$.
    We want to show that
    \[[f^{-1},h^{-1}](\opi{h^{-1}(q)}{f^{-1}(p)})\cap \opi{h^{-1}(q)}{f^{-1}(p)}=\varnothing,\]
    providing the desired contradiction. For this, since
    \[fh(\opi{h^{-1}(q)}{f^{-1}(p)})\subset fh(\opi{h^{-1}(q)}{q})=\opi{q}{fh(q)},\]
    it is enough to prove that
    $f^{-1}h^{-1}(\opi{q}{fh(q)})\subset \opi{f^{-1}(p)}{\overline p}$.
    To show this, note that on the one hand we have $h^{-1}(q)\in \opi{p}{q}$ and $f^{-1}(\opi{p}{q})= \opi{f^{-1}(p)}{q}$, hence $f^{-1}h^{-1}(q)\in \opi{f^{-1}(p)}{q}$. On the other hand, we have $fh(q)\in fh(I_q)$; the image $h^{-1}(fh(I_q))$ is contained in $J$ because $h^{-1}(h(I_q))=I_q$ and  $fh(I_q)$ is between $h(I_q)$ and $I_{\overline p}$. Finally, we have $f^{-1}(J)\subset \opi{q}{\overline p}$, and we conclude that $f^{-1}(h^{-1}fh(q))\in f^{-1}(J)\subset \opi{q}{\overline p}$. This easily gives the desired inclusion $f^{-1}h^{-1}(\opi{q}{fh(q)})\subset \opi{f^{-1}(p)}{\overline p}$.

    To conclude, note that for any $h\in H_+(p)$ and $f\in H_+(q)$ we have
    \begin{align*}
        h^{-1}(I_q\cup I_{\overline q})\subset I_p\setminus \overline{I_q\cup I_{\overline q}},\quad h(I_q\cup I_{\overline q})\subset I_{\overline p}\setminus \overline{I_q\cup I_{\overline q}},\\
        f^{-1}(I_p\cup I_{\overline p})\subset I_q\setminus \overline{I_p\cup I_{\overline p}},\quad f(I_p\cup I_{\overline p})\subset I_{\overline q}\setminus \overline{I_p\cup I_{\overline p}},
    \end{align*}
    and this shows that $(U_p,U_q)$ is a proper ping-pong partition for $\Stab{G}{p}$ and $\Stab{G}{q}$.
\end{proof}
\setcounter{claim}{0}

\section{Ping-pong dynamics in the unlinked case}\label{Sec:unlinked}

\subsection{Possible configurations of gaps in outside intervals}

In this section we prove Theorem~\ref{Thm1:unlinked}. That is, we deal with the case where $p$ and $q$ are unlinked non-simple points. Upon replacing $q$ with $\overline q$, it is enough to consider the case $p<\overline{q}<q<\overline{p}$.

\begin{rmk}
    Assume $p$ and $q$ are unlinked non-simple points with $p<\overline{q}<q<\overline{p}$.
    By \refprop{gapFromLimitSet}, $G$ is discrete on the right side of both $p$ and $q$, since $R(p)$ and $R(q)$ contain fixed points of elements of $\Stab{G}{q}$ and $\Stab{G}{p}$, respectively. Therefore, $p$ and $q$ have right gaps.
\end{rmk}

The next result gives a criterion for having gaps on the other sides, and describe their possible configurations.

\begin{prop}\label{Prop:outsideConfig}
     Let $G$ be a hyperbolic-like group. Assume that $p$ and $q$ are unlinked non-simple points with $p<\overline{q}<q<\overline{p}$. If $\opi{\overline{q}}{q}\cap \Core(p)\neq \varnothing$, then there exists a unique left gap $L$ of $q$ such that $\Core(p)\cap L\neq \varnothing$. Moreover, it satisfies $\Core(p)\cap (\cldi{\overline q}{q}\setminus \overline L)=\varnothing$. In other terms, writing $L=\opi{u}{v}$, there exist right gaps $R_1$ and $R_2$ of $p$ such that $\ropi{\overline q}{u}\subset R_1$ and $\lopi{v}{q}\subset R_2$.
\end{prop}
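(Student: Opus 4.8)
The plan is to reduce everything to the single combinatorial input \refprop{configFix} by way of \reflem{containGapGeneral}, exploiting the asymmetry of the unlinked configuration. Throughout I work with the linear order on $R(p)=\opi{p}{\overline p}$, inside which the left side $\opi{\overline q}{q}$ of $q$ is a subinterval, while $p$ and $\overline p$ lie on the opposite side $\opi{q}{\overline q}$ of $q$.

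First I record the key leverage. Since $q,\overline q\in R(p)$, any right gap $K$ of $p$ with $\overline K\subset\opi{\overline q}{q}$ has $K$ and $\{q,\overline q\}$ on the same side of $p$; hence in \reflem{containGapGeneral}, applied with the roles of $p$ and $q$ exchanged, the second alternative is excluded, and $K$ must be contained in a left gap of $q$. Note the asymmetry: the same lemma gives no information about left gaps of $q$, precisely because $p$ and $\overline p$ lie on the side of $q$ opposite to such a gap. From this I deduce a \emph{betweenness property}: if $a_1<a_2$ lie in $\Core(p)\cap\opi{\overline q}{q}$ and $z\in\Core(q)$ satisfies $a_1<z<a_2$, then $z\in\Core(p)$. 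Indeed, otherwise $z$ would lie in a right gap $K$ of $p$ with $\overline K\subset\opi{a_1}{a_2}\subset\opi{\overline q}{q}$, which by the previous paragraph sits inside a left gap of $q$, contradicting $z\in\Core(q)$.

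The heart of the argument, and the step I expect to be the main obstacle, is the \emph{confinement claim}: no two distinct left gaps of $q$ can both meet $\Core(p)$. Suppose $J_1<J_2$ were such gaps, with $a_i\in J_i\cap\Core(p)$, and choose a point $z\in\Core(q)$ strictly between $J_1$ and $J_2$ that is two-sided accumulated by $\Core(q)$ (possible since $\Core(q)\cap\opi{\overline q}{q}$ is perfect and meets $\opi{a_1}{a_2}$). The betweenness property forces $z\in\Core(p)$, so $z$ is a two-sided accumulation point of both $\Core(p)$ and $\Core(q)$. Near such a $z$ the stabilizers $\Stab{G}{p}$ and $\Stab{G}{q}$ both act with nontrivial dynamics on the two sides, and I would derive a contradiction exactly as in the proof of \reflem{containGapGeneral}: choosing $h\in H_+(p)$ and $g\in H_+(q)$ with suitably nested images around $z$ produces a composition of the form $hg^{-1}$ whose attracting and repelling fixed points are forced into two different quadrants, contradicting the fixed-point tables of \refprop{configFix} (equivalently, contradicting that the element is hyperbolic-like). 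This is where all the combinatorial weight sits.

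Granting the confinement claim, the remainder is soft. Since $\Core(p)\cap\opi{\overline q}{q}\neq\varnothing$, perfectness of $\Core(p)$ together with the claim and betweenness shows that there is a unique left gap $L=\opi{u}{v}$ of $q$ meeting $\Core(p)$, and that every point of $\Core(p)\cap\opi{\overline q}{q}$ lies in $\overline L$; hence $\Core(p)\cap(\cldi{\overline q}{q}\setminus\overline L)=\varnothing$. The interval $\opi{\overline q}{u}$ then contains no point of $\Core(p)$, so it lies in a single right gap $R_1$ of $p$ (two distinct right gaps would be separated by a point of $\Core(p)$). Finally $u>\overline q$: otherwise $L=\opi{\overline q}{v}$ would be a left gap of $q$ with left endpoint $\overline q$, contradicting that the minimal set $\Core(q)\cap\opi{\overline q}{q}$ accumulates at $\overline q=a(f)$ from inside $\opi{\overline q}{q}$, as one sees by taking $f\in H_+(q)$ and iterating. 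Therefore $\overline q$ lies in the interior of $R_1$, giving $\ropi{\overline q}{u}\subset R_1$; the symmetric argument at $q$ produces a right gap $R_2$ with $\lopi{v}{q}\subset R_2$, which completes the proof of \refprop{outsideConfig}.
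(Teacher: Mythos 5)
Your reduction of the betweenness property to \reflem{containGapGeneral} is sound, but the proof has a genuine gap exactly where you locate ``all the combinatorial weight'': the confinement claim is not proved, and the mechanism you invoke for it cannot work as stated. You correctly observe at the outset that \reflem{containGapGeneral} gives no information about left gaps of $q$, because $\{p,\overline p\}$ lies in $R(q)$; but the proof of that lemma, run near your common core point $z$, produces compositions $hg^{-1}$ whose fixed points land in the configurations 3b/3c of Table~\ref{Table:fixedPointsComposition}, and these are precisely the \emph{allowed} configurations when the relevant gap and $\{p,\overline p\}$ sit on opposite sides of $q$. In other words, the nesting does not force the fixed points ``into two different quadrants,'' and no contradiction with \refprop{configFix} results. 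The paper's proof of this step does something your proposal never does: it chooses $f\in H_+(p)$ so that $q$ and the new non-simple point $f(q)$ become \emph{linked}, and then applies the linked-case machinery (\refcor{pingPongLinked}, which rests on \refthm{linkedCase}) to the pair $(q,f(q))$. Without that detour through the linked case, there is no two-element ping-pong argument on record that rules out two left gaps of $q$ meeting $\Core(p)$, nor one that rules out $\Core(p)$ and $\Core(q)$ coinciding on a subinterval of $L(q)$.

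Two further steps are also missing. First, the \emph{existence} of $L$ does not follow from perfectness, confinement and betweenness: these do not exclude the scenario in which $\Core(p)\cap\opi{\overline q}{q}$ is entirely contained in $\Core(q)$, so that no left gap of $q$ meets $\Core(p)$ at all; the paper excludes this by another application of \refcor{pingPongLinked} to a linked pair $(q,f(q))$ with $f\in H_+(p)$. Second, the statement requires $\overline q\notin\Core(p)$ and $q\notin\Core(p)$ (this is what upgrades $\opi{\overline q}{u}\subset R_1$ to $\ropi{\overline q}{u}\subset R_1$); your argument only shows $u\neq\overline q$ and that the open interval $\opi{\overline q}{u}$ misses $\Core(p)$, which is compatible with $\overline q$ being the left endpoint of $R_1$ and hence a point of $\Core(p)$. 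The paper needs a separate argument here, moving $R_1$ by an element of $H_-(q)$ and applying \reflem{containGapGeneral} to the resulting pair of non-simple points.
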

\begin{proof}
    First, let us see that $q$ has left gaps. Since we are assuming $\opi{\overline{q}}{q}\cap \Core(p)\neq \varnothing$, as $p$ is non-simple, we can take $h\in H_+(p)$ such that $\overline q<h(\overline q)<{q}<h({q})<\overline{p}$.
    Since $h(\overline q)$ is also non-simple, by \refprop{gapFromLimitSet}, $G$ is discrete on $L(q)=\opi{\overline q}{{q}}$ and there is a left gap of $q$.

    Second, we show that there exists a left gap $L$ of $q$ such that $\Core(p)\cap L\neq \varnothing$. Assume by contradiction that any left gap of $q$ is contained in a gap of $p$. 
    From \reflem{containGapGeneral}, we can consider the maximal open interval $J=\opi{\alpha}{\beta}$ in $L(q)$ such that $\Core(p)\cap J=\Core(q)\cap J\neq\varnothing$.
    Note that after \reflem{containGapGeneral}, if $\overline q\neq \alpha$ (respectively, if ${q}\neq \beta$), then there is a gap of $p$ that contains $\opi{\overline q}{\alpha}$ (respectively, $\opi{\beta}{{q}}$).
    By non-simplicity of $p$, we can take $f\in H_+(p)$ such that 
    \[\overline q\le \alpha<f(\overline q)\le f(\alpha)<\beta\le  q<f(\beta)\le f({q}).\]
    Note that by the properties of $J$ and since $f(\Core(q))=\Core(f(q))$, the interval $K=\opi{f(\alpha)}{\beta}$ satisfies $\Core(q)\cap K=\Core(f(q))\cap K\neq\varnothing$.
    However, applying \refcor{pingPongLinked} to the linked non-simple points $q$ and $f(q)$, we see that such an interval $K$ cannot exist. This gives the desired contradiction.

    Next, let us show that $L$ satisfies $\Core(p)\cap (\opi{\overline q}{ q}\setminus \overline L)=\varnothing$. Note that this property ensures that $L$ is unique. Write $L=\opi{u}{v}$, and assume by way of contradiction that $\Core(p)\cap \opi{\overline q}{u}\neq\varnothing$. Since we also have $\Core(p)\cap L\neq\varnothing$ and $p$ is non-simple, we can take an element $f\in H_+(p)$ such that 
    \begin{equation}\label{Eqn:endCollapseParallel}
        p<\overline q<f(\overline q)<u<f(u)<v<f(v)<f({q})<\overline{p}.
    \end{equation}
    With such a choice, the non-simple points $q$ and $f(q)$ are linked, and $\overline {L}\cap\{f(\overline q),f(q)\}=\varnothing$. After \reflem{containGapGeneral}, the gap $L$ of $q$ is contained in some gap of $f(q)$. This provides the desired contradiction, since $f(L)$ is a gap of $f(q)$ intersecting $L$, but not containing it (because of condition \eqref{Eqn:endCollapseParallel}). The same strategy shows that $\opi{v}{{q}}\cap \Core(p)=\varnothing$. 

    Finally, let us argue by contradiction that $\overline q\notin \Core(p)$. Assume this is not the case, then the gap $R_1$ of $p$ from the statement is of the form $\opi{\overline q}{v_1}$, with $\overline q<u\le v_1<v$.
    Take an element $f$ in $H_-(q)$. Then, $f(R_1)$ is a right gap of $f(p)$ such that $R_1\subsetneq f(R_1)\subsetneq L(q)$. On the other hand, we have that either $f(p)$ and $p$ are linked, or $\{f(p),f(\overline p)\}\subset \opi{q}{\overline p}\subset R(p)$. In both cases, by \reflem{containGapGeneral}, we must have $f(R_1)=R_1$. This gives the desired contradiction. A similar argument shows that ${q}\notin \Core(p)$.
\end{proof}

\subsection{Possible configurations of gaps in inside intervals}

Here we consider the right gaps of $p$ and $q$ when $p<\overline{q}<q<\overline{p}$. 
The next lemma is a direct consequence of \reflem{containGapGeneral}.

\begin{lem}\label{Lem:coincideGapAntiParallel}
    Let $G$ be a hyperbolic-like group. Assume that $p$ and $q$ are unlinked non-simple points with $p<\overline{q}<q<\overline{p}$. If $R$ is a gap of $p$ such that $\closure{R}\subset R(p)\cap R(q)$, then either $R$ is also a gap of $q$, or $R$ is contained in a gap of $q$ whose closure intersects $\{p,\overline{p}\}$.
\end{lem}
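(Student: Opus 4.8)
The plan is to deduce the statement from two applications of \reflem{containGapGeneral}, one with the roles of $p$ and $q$ interchanged and one in the original direction. First I would record the combinatorial picture forced by the ordering $p<\overline q<q<\overline p$. Writing out the four arcs determined by $p,\overline q,q,\overline p$, a direct computation gives $R(p)\cap R(q)=\opi{p}{\overline q}\cup \opi{q}{\overline p}$. In particular $\{q,\overline q\}\subset R(p)$ and $\{p,\overline p\}\subset R(q)$, and since $R(q)$ is open and avoids $q$ and $\overline q$, the hypothesis $\closure R\subset R(p)\cap R(q)$ yields both $\closure R\cap\{q,\overline q\}=\varnothing$ and $R\subset R(p)$.

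These two facts let me feed $R$ into \reflem{containGapGeneral} with $p$ and $q$ swapped: $R$ is a gap of $p$ with $\closure R\cap\{q,\overline q\}=\varnothing$, so the lemma offers two alternatives. The second one would require $R$ and $\{q,\overline q\}$ to lie on different sides of $p$; but I have just observed that both $R$ and $\{q,\overline q\}$ sit in the right side $R(p)$, so (since $p$ and $q$ are unlinked only covers half of that alternative) this alternative is excluded. Hence I obtain a gap $K$ of $q$ with $R\subset K$.

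It then remains to analyze the inclusion $R\subset K$. If $R=K$, then $R$ is also a gap of $q$ and the first conclusion holds. Otherwise $R\subsetneq K$, and I would argue by contradiction that $\closure K\cap\{p,\overline p\}\neq\varnothing$. Suppose $\closure K\cap\{p,\overline p\}=\varnothing$. Since $R\subset K$ and $R\subset R(q)$, while each gap of $q$ lies entirely within a single side of $q$, we get $K\subset R(q)$; as $\{p,\overline p\}\subset R(q)$ as well, $K$ and $\{p,\overline p\}$ are on the same side of $q$. Applying \reflem{containGapGeneral} to the gap $K$ of $q$ now in the original direction, the different-sides alternative is again excluded, so $K$ is contained in some gap $K'$ of $p$. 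Then $R\subset K\subset K'$ with $R$ and $K'$ both gaps of $p$, forcing $R=K'$ and hence $R=K$, contradicting $R\subsetneq K$. Therefore $\closure K\cap\{p,\overline p\}\neq\varnothing$, which is the second conclusion.

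The argument is essentially bookkeeping, and the only point requiring care is exactly this bookkeeping: one must track precisely which side of $p$ (resp.\ of $q$) each of $R$, $K$, $\{q,\overline q\}$, and $\{p,\overline p\}$ lies on after the role-swap, and use the structural fact that a gap of a non-simple point is contained in a single side. I expect this side-tracking, together with the observation that distinct gaps of $p$ are disjoint (so $R\subseteq K'$ with both gaps of $p$ forces equality), to be the main—if modest—obstacle.
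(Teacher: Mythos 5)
Your proof is correct and follows essentially the same route as the paper's: one application of \reflem{containGapGeneral} with the roles of $p$ and $q$ swapped to produce the gap $K\supset R$ of $q$, followed by a second application in the original direction to force either $R=K$ or $\closure{K}\cap\{p,\overline p\}\neq\varnothing$. The paper states this in two terse sentences, leaving the side-tracking (which you carry out explicitly and correctly) to the reader.
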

\begin{proof}
    Since $\closure{R}\cap\{q,\overline{q}\}=\emptyset$, and since $R$ and $\{q,\overline {q}\}$ are in $R(p)$, by \reflem{containGapGeneral}, there is a gap $R'$ of $q$ that contains $R$. Moreover, applying \reflem{containGapGeneral} to $R'$, we have that either $\closure{R'}$ intersects $\{p,\overline{p}\}$ or $R=R'$.
\end{proof}

\begin{lem}\label{Lem:endCollapseAntiParallel1}
    Let $G$ be a hyperbolic-like group. Assume that $p$ and $q$ are unlinked non-simple points with $p<\overline{q}<q<\overline{p}$. If there is a right gap $R_p=\opi{u}{v}$ of $p$ such that $u<\overline{q}\leq v\le q$, then there is a right gap $R_q$ of $q$ such that $\opi{p}{u}\subset R_q$.
\end{lem}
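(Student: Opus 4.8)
The plan is to reduce the statement to the single assertion that $\opi{p}{u}\cap\Core(q)=\varnothing$. Note first that $\opi{p}{u}\subset\opi{p}{\overline q}\subset R(p)\cap R(q)$, since $p<u<\overline q<\overline p$ places $\opi{p}{u}$ inside both $R(p)=\opi{p}{\overline p}$ and $R(q)=\opi{q}{\overline q}$ (the latter is the arc through $\overline p$ and $p$). Once we know that $\opi{p}{u}$ is disjoint from $\Core(q)$, it is a connected subset of $S^1\setminus\Core(q)$ and hence lies inside a single gap of $q$; as it meets $R(q)$, that gap is a \emph{right} gap $R_q$ of $q$, which is exactly the desired conclusion. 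So everything reduces to excluding core points of $q$ in $\opi{p}{u}$.

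The key structural input comes from \reflem{containGapGeneral} together with \reflem{coincideGapAntiParallel}. Because $\opi{p}{u}\subset R(p)\cap R(q)$, both $\{p,\overline p\}$ and any gap with closure compactly inside $\opi{p}{u}$ lie on the \emph{same} side (the right side) of $q$, and symmetrically on the same side of $p$; hence alternative~(2) of \reflem{containGapGeneral} never occurs for such gaps. Running the lemma in both directions then shows that a gap of $q$ whose closure is compactly contained in $\opi{p}{u}$ is automatically a gap of $p$, while a gap of $p$ compactly inside $\opi{p}{u}$ is, by \reflem{coincideGapAntiParallel}, either a gap of $q$ or is swallowed by a strictly larger gap of $q$. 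Consequently, around any hypothetical core point $z\in\Core(q)\cap\opi{p}{u}$ the two cores must coincide \emph{locally}: if some gap of $p$ near $z$ were swallowed by a larger gap of $q$, that larger gap would contain $z$, contradicting $z\in\Core(q)$. Thus $\Core(p)=\Core(q)$ on a neighbourhood of $z$, and in particular $z\in\Core(p)$.

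I would then derive a contradiction from such local coincidence. Assume $z\in\Core(q)\cap\opi{p}{u}$. Using that $p$ is non-simple, choose $h\in H_+(p)$ with $\theta_{p,R(p)}(h)$ small; since $\overline q$ lies in the gap $R_p=\opi{u}{v}$, one checks that $h(\overline q)$ lies just to the right of $v$, so that (when $v<q$) the fixed points are ordered $\overline q<h(\overline q)<q<h(q)$ and the non-simple points $q$ and $h(q)$ become \emph{linked}. Now $h$ preserves $\Core(p)$ but carries $\Core(q)$ to $\Core(h(q))=h(\Core(q))$, so the local coincidence near $z$ is transported into a configuration for the linked pair $(q,h(q))$. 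Feeding this pair into \refcor{pingPongLinked} and \reflem{containGap}, and following the template of the final paragraph of \refprop{outsideConfig}, the linked structure forces a suitable gap to coincide with its own image under an element fixing one of its endpoints, while the dynamics makes that image strictly larger—an impossibility. This yields $\Core(q)\cap\opi{p}{u}=\varnothing$ and completes the proof.

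The hard part is precisely this last step: pinning down the exact gap and the exact element that produce the ``coincides with yet strictly contains its image'' contradiction, and verifying that the interaction between the gap $R_p$ of $p$ (which straddles $\overline q$) and the gaps of the linked pair $(q,h(q))$ is the one dictated by \reflem{containGap} rather than a benign alternative; the persistent local coincidence $\Core(p)=\Core(q)$ is ultimately what must be shown to violate the global separation \refprop{distinctCore}. One must also treat the degenerate case $v=q$ separately, where the simple linking construction degenerates and $h(q)$ should be replaced by pushing $R_p$ toward $p$ via $H_-(p)$, checking that the family $h^{-1}(R_p)$, which accumulates on $p$ inside $\opi{p}{u}$, lies in one single gap of $q$ rather than in infinitely many (the latter forcing $p\in\Core(q)$, which the argument above excludes).
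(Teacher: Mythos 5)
Your opening reduction --- that it suffices to prove $\opi{p}{u}\cap\Core(q)=\varnothing$, since $\opi{p}{u}$ is a connected subset of $R(p)\cap R(q)$ --- is exactly the paper's first step and is correct. But what follows has a genuine gap: the decisive contradiction is never produced. You yourself flag ``pinning down the exact gap and the exact element'' as ``the hard part,'' which is to say the proof is not there. In addition, (i) the degenerate case $v=q$ is left open, and it is not a minor one: there the monotone map $m_{p,R(p)}$ is constant on $\overline{R_p}\ni\overline q$ with value $m_{p,R(p)}(q)$, so \emph{every} $h\in H_+(p)$ sends $\overline q$ past $q$ and the pair $(q,h(q))$ comes out unlinked, killing the construction your whole argument rests on; (ii) the ``local coincidence of cores near $z$'' step is shaky, since a gap of $p$ containing $z$ may have $p$ in its closure, where \reflem{coincideGapAntiParallel} does not apply; (iii) even when $v<q$, making \refcor{pingPongLinked} bite requires controlling where $h(z)$ lands relative to the four covering intervals, which you have not done. (For the record, a linked-pair argument can be completed when $v<q$: the covering $S^1=\overline{I_{\overline q}\cup I_{h(\overline q)}\cup I_q\cup I_{h(q)}}$ forces every point of $\Core(h(q))\cap\opi{p}{u}$ to precede every point of $\Core(q)\cap\opi{p}{u}$ on the arc from $h(q)$ to $\overline q$, contradicting $z<h(z)$ for $z\in\Core(q)\cap\opi{p}{u}$ --- but this is precisely the work you deferred, and it still leaves $v=q$ untreated.)

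The paper's proof is far more direct and uses the stabilizer of $q$ rather than that of $p$. The assumption $\Core(q)\cap\opi{p}{u}\neq\varnothing$ gives $m_{q,R(q)}(p)<m_{q,R(q)}(u)$, so by density of the right representation of $\Stab{G}{q}$ one picks $f\in H_-(q)$ with $f(p)<p<f(u)<u$. Since $f$ fixes $q$ and $\overline q$ and pushes $v\in\cldi{\overline q}{q}$ toward $q$, the interval $f(R_p)=\opi{f(u)}{f(v)}$ is a right gap of the non-simple point $f(p)$ which \emph{strictly contains} $R_p$ and satisfies $\overline{f(R_p)}\cap\{p,\overline p\}=\varnothing$. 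Now apply \reflem{containGapGeneral} to the gap $f(R_p)$ of $f(p)$ and the point $p$: alternative (1) would place $f(R_p)$ inside a gap of $p$, impossible because $f(R_p)$ already properly contains the gap $R_p$ of $p$; alternative (2) forces $\{p,\overline p\}\subset L(f(p))$, which is false because $f^{-1}(p)\in\opi{p}{\overline q}\subset R(p)$, i.e.\ $p\in R(f(p))$. This short argument is what your sketch is missing; I would replace the linked-pair construction with it.
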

\begin{proof}
    We adapt the argument given at the end of the proof of \refprop{outsideConfig} to show that $\overline q\notin \Core(p)$.
    Assume by way of contradiction that $\opi{p}{u}\cap\Core(q)\neq\varnothing$. Then, we can take an element $f\in H_-(q)$ such that $f(p)<p<f(u)<u$. Then $f(R_p)$ is a right gap of $f(p)$ such that $\overline{f(R_p)}\cap \{p,\overline p\}=\varnothing$, and we have $f(R_p)\cap R_p\neq\varnothing$, but $f(R_p)\neq R_p$. Since $\overline{R_p}\cap \{f(p),f(\overline p)\}=\varnothing$, according to \reflem{containGapGeneral}, we must have $\{p,\overline p\}\subset L(f(p))$. However, after the choice of $f$, we have $p\in R(f(p))$, and this provides the desired contradiction.
\end{proof}

\begin{lem}\label{Lem:noHexagon}
    Let $G$ be a hyperbolic-like group. Assume that $p$ and $q$ are unlinked non-simple points with $p<\overline{q}<q<\overline{p}$. Then, at least one of the following occurs: $L(p)\cap\Core(q)=\varnothing$, or $L(q)\cap \Core(p)=\varnothing$.
\end{lem}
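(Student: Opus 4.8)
The plan is to argue by contradiction: assume $L(p)\cap\Core(q)\neq\varnothing$ and $L(q)\cap\Core(p)\neq\varnothing$, keeping the normalization $p<\overline q<q<\overline p$, so that $L(q)=\opi{\overline q}{q}\subset R(p)$ and $L(p)=\opi{\overline p}{p}\subset R(q)$. First I would record the structural consequences of \refprop{outsideConfig}, applied both as stated and with the roles of $p$ and $q$ exchanged (the hypothesis is symmetric under $p\leftrightarrow q$): there is a unique left gap $L_q$ of $q$ inside $\opi{\overline q}{q}$ meeting $\Core(p)$, flanked by two right gaps of $p$ whose closures contain $\overline q$ and $q$, and symmetrically a unique left gap $L_p$ of $p$ inside $\opi{\overline p}{p}$ meeting $\Core(q)$; in particular $q,\overline q\notin\Core(p)$ and $p,\overline p\notin\Core(q)$, and the core of $p$ inside $\opi{\overline q}{q}$ is concentrated in the single gap $L_q$.

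The heart of the argument is to manufacture a linked pair from the $G$-orbit and play it off against the assumed spreading of the core. Since $q$ is non-simple, I would choose a sufficiently small $f\in H_+(q)$; then $f(p)\in\opi{p}{\overline q}$ and $f(\overline p)\in\opi{\overline p}{p}$, so the pairs $\{p,\overline p\}$ and $\{f(p),f(\overline p)\}$ link. Thus $p$ and $f(p)=\overline{f(\overline p)}$ are linked non-simple points, and \refthm{linkedCase} together with \refcor{pingPongLinked} provides four gaps whose closures cover $S^1$: two gaps of $f(p)$ around $p$ and $\overline p$, and two gaps of $p$ around $f(p)$ and $f(\overline p)$.

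The aim is then to locate these four gaps so tightly that the covering property collapses. Because $p$ is the repelling endpoint of $H_+(p)$ on $R(p)$, the core of $p$ accumulates at $p$ from inside $\opi{p}{\overline q}$; the intended consequence is that for $f$ small the gap of $p$ containing $f(p)$ is a small gap contained in $\opi{p}{\overline q}$, while the gap of $p$ containing $f(\overline p)$ is a left gap contained in $\opi{\overline p}{p}$. Feeding the interleaving inequalities of \refcor{pingPongLinked} with this information would force the remaining gap of $f(p)$, the one around $\overline p$, to swallow all of $L(q)=\opi{\overline q}{q}$, whence $\Core(f(p))\cap L(q)=\varnothing$. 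On the other hand, $f$ fixes $q$ and $\overline q$ and preserves $L(q)$, so the assumption $\Core(p)\cap L(q)\neq\varnothing$ gives $\Core(f(p))\cap L(q)=f\bigl(\Core(p)\cap L(q)\bigr)\neq\varnothing$, a contradiction. The companion assumption $\Core(q)\cap L(p)\neq\varnothing$ enters symmetrically, through the left gap $L_p$ and the dual choice of some $h\in H_+(p)$, to exclude the degenerate configurations in which the relevant gap fails to be localized as claimed.

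I expect the main obstacle to be exactly this localization step: proving that, under the standing hypotheses, the gap of $p$ containing $f(p)$ does not itself reach into $L(q)$, so that the covering in \refcor{pingPongLinked} genuinely forces a single gap of $f(p)$ to contain $L(q)$. This is where the fine information from \refprop{outsideConfig} — that $\overline q,q\notin\Core(p)$ and that $\Core(p)\cap\opi{\overline q}{q}$ sits inside the single gap $L_q$ — must be combined with the accumulation of $\Core(p)$ at the repelling endpoint $p$. Some care is also needed to make the choice of $f$ uniform, that is, small enough to guarantee both the linking of $p$ with $f(p)$ and the gap localization at the same time, and to keep track of which of the two hypotheses controls each side of the configuration.
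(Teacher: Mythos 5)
Your reduction to the linked case via the auxiliary point $f(p)$ is an appealing idea, but the localization step you yourself flag as ``the main obstacle'' is not merely delicate: it fails in precisely the configuration the lemma must exclude, and no choice of ``small'' $f\in H_+(q)$ can repair it. The reason is that under the standing hypotheses \emph{both} applications of \refprop{outsideConfig} are in force. Applying it with the roles of $p$ and $q$ exchanged (using $L(p)\cap\Core(q)\neq \varnothing$), the point $p$ lies in the \emph{interior} of a right gap $I_6=\opi{u_6}{v_6}$ of $q$, with $v_6\in\opi{p}{\overline q}$. Hence for every $f\in H_+(q)$, no matter how small its translation number, $f(p)$ lies in the gap $f(I_6)$, which is disjoint from $I_6$ and contained in $\opi{v_6}{\overline q}$; so $f(p)$ never enters $\opi{p}{v_6}$, and the accumulation of $\Core(p)$ at $p$ is irrelevant because the $H_+(q)$-orbit of $p$ does not accumulate at $p$. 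Worse, \reflem{endCollapseAntiParallel1} gives $\opi{v_6}{\overline q}\subset I_1$, where $I_1$ is the right gap of $p$ containing $\overline q$ and reaching into $L(q)$ up to the left gap $L_q=\opi{u_2}{v_2}$ of $q$ produced by \refprop{outsideConfig} in its original form. So the gap of $p$ containing $f(p)$ is always $I_1$ itself, which does reach into $L(q)$. The interleaving of \refcor{pingPongLinked} then only yields that a gap of $f(p)$ contains $\opi{v_1}{q}$ with $v_1\ge u_2$, and pulling this back by $f^{-1}$ gives $\Core(p)\cap\opi{f^{-1}(v_1)}{q}=\varnothing$, which is compatible with $\Core(p)\cap L(q)$ being a nonempty subset of $\cldi{u_2}{v_2}$: no contradiction materializes.

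The paper closes exactly this case by a different mechanism: it assembles the six gaps $I_1,\dots,I_6$ coming from the two applications of \refprop{outsideConfig} and from \reflem{endCollapseAntiParallel1} into a cyclic covering of the circle, picks $h\in H_+(p)$ with $h(I_1)\subset I_2$ and $f\in H_+(q)$ with $f(I_4)\subset I_5$, and observes that then $fh(I_1)\subset f(I_2)\subset I_1$ and $fh(I_5)\subset f(I_4)\subset I_5$, so $fh$ would have attracting fixed points in the two disjoint sets $\closure{I_1}$ and $\closure{I_5}$, contradicting the hyperbolic-like hypothesis. If you want to keep your linked-pair strategy, you would have to replace the single element $f$ by a composition designed to escape the gap $I_1$, at which point you are essentially rebuilding the paper's argument; as written, your proof has a genuine gap at the localization step.
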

\begin{proof}
    Assume by way of contradiction that $L(p)\cap\Core(q)\neq\varnothing$ and $L(q)\cap \Core(p)\neq\varnothing$.
    By \refprop{outsideConfig}, there are six intervals $I_i=\opi{u_i}{v_i}$, with $i\in\ZZ/6\ZZ$, which satisfy the following:
    \begin{itemize}
        \item $I_1$ and $I_3$ are right gaps of $p$, and $I_5$ is a left gap of $p$;
        \item $I_4$ and $I_5$ are right gaps of $q$, and $I_2$ is a left gap of $q$;
        \item $\ropi{\overline{q}}{u_2}\subset I_1$, $\lopi{v_2}{q}\subset I_3$, $\ropi{\overline{p}}{u_5}\subset I_4$, $\lopi{v_5}{p}\subset I_6$.
    \end{itemize}
    After \reflem{endCollapseAntiParallel1}, these properties yield
    \[\opi{v_6}{\overline{q}}\subset I_1,\ \opi{q}{u_4}\subset I_3,\ \opi{v_3}{\overline{p}}\subset I_4,\ \opi{p}{u_1}\subset I_6.\]
    As a summary, the intervals $\overline{I_i}$ cover the circle, and we have $u_i\leq v_{i-1}<u_{i+1}\leq v_i$ for all $i\in \ZZ/6\ZZ$. 
    
    Now, take elements $h\in H_+(p)$ and $f\in H_+(q)$ such that $h(I_1)\subset I_2$ and $f(I_4)\subset I_5$.
    Observe that this also gives $h(I_5)\subset I_4$ and $f(I_2)\subset I_1$; hence, 
    \[fh(I_1)\subset f(I_2)\subset I_1\text{ and }fh(I_5)\subset f(I_4)\subset I_5.\]
    This implies that $\closure{I_1}$ and $\closure{I_5}$ contain the attracting fixed point of $fh$. However, $\closure{I_1}\cap \closure{I_5}=\emptyset$, and this provides the desired contradiction.
\end{proof}

When $p$ and $q$ are unlinked non-simple points and we can cover the circle with the closure of two gaps, it is not difficult to produce a proper ping-pong partition.

\begin{prop}\label{Prop:pingPongUnlinked}
    Let $G$ be a hyperbolic-like group. Suppose that $p$ and $q$ are unlinked non-simple points with $p<\overline q<q<\overline p$ and assume that there are right gaps $R_p$ and $R_q$ of $p$ and $q$, respectively, such that $S^1=\overline{R_p\cup R_q}$. Then, setting $U_p=R_q\setminus \overline{R_p}$ and $U_q=R_p\setminus \overline{R_q}$, we have that $(U_p,U_q)$ is a proper ping-pong partition for $\Stab{G}{p}$ and $\Stab{G}{q}$.
\end{prop}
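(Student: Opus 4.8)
The plan is to reduce the two ping-pong inclusions to a single clean mechanism: the stabilizer of a non-simple point permutes its gaps, and distinct gaps are disjoint. First I would normalize the cyclic order to $p<\overline q<q<\overline p$, as allowed by hypothesis. The crucial initial remark is that, directly from the definitions, $U_q=R_p\setminus\overline{R_q}\subset R_p$ and $U_p=R_q\setminus\overline{R_p}\subset R_q$. Hence it suffices to control the images $g(R_p)$ for $g\in\Stab{G}{p}\setminus\{\id\}$ and $g(R_q)$ for $g\in\Stab{G}{q}\setminus\{\id\}$.

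For the first inclusion I would use that $R_p$, being a right gap of $p$, is a gap of a right monotone map $m=m_{p,R(p)}$, so $m$ is constant on $R_p$. Since $m(g(x))=m(x)+\theta(g)$ for the associated representation $\theta$, and $\theta(g)\neq 0$ for $g\neq\id$, the image $g(R_p)$ is a gap of $p$ with a different $m$-value; in particular it is a gap distinct from $R_p$, so $g(R_p)$ is disjoint from $R_p$ and avoids its endpoints (which lie in $\Core(p)$), giving $g(R_p)\cap\overline{R_p}=\varnothing$. The sign of $\theta(g)$ then places $g(R_p)$ entirely on one side of $R_p$ inside $R(p)$, so in either case $g(R_p)\subset R(p)\setminus\overline{R_p}=\opi{p}{u}\cup\opi{v}{\overline p}$, writing $R_p=\opi{u}{v}$. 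The role of the covering hypothesis $S^1=\overline{R_p\cup R_q}$ is precisely to yield $R(p)\setminus\overline{R_p}\subset R_q$: the complement of $\overline{R_p}$ is contained in $\overline{R_q}$, and since its two components avoid the endpoints of $R_q$ (which lie in $\Core(q)$), they lie in the open gap $R_q$ itself. Combining, $g(U_q)\subset g(R_p)\subset R(p)\setminus\overline{R_p}\subset R_q\setminus\overline{R_p}=U_p$. The second inclusion follows by the symmetric argument exchanging the roles of $p$ and $q$, using $R(q)\setminus\overline{R_q}\subset R_p$. Finally I would note that $U_p$ and $U_q$ are nonempty single intervals and are disjoint (immediately, since $U_p$ avoids $\overline{R_p}\supset U_q$), so that $(U_p,U_q)$ is indeed a proper ping-pong partition.

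The main obstacle is the bookkeeping of the covering hypothesis, i.e.\ converting $S^1=\overline{R_p\cup R_q}$ into the usable containments $R(p)\setminus\overline{R_p}\subset R_q$ and $R(q)\setminus\overline{R_q}\subset R_p$. This requires knowing that the endpoints of $R_p$ and $R_q$ are genuine interior points — in particular that no gap of $p$ abuts $p$ or $\overline p$, and no gap of $q$ abuts $q$ or $\overline q$ — which holds because the cores accumulate on these fixed points (the stabilizers act with dense image in $(\RR,+)$ by non-simplicity, as in the proof of \reflem{discreteSideGap}). Once the positions of the four gap-endpoints relative to $p,\overline p,q,\overline q$ are pinned down, the two containments are a short interval computation, and the remainder of the argument is the formal gap-permutation mechanism described above.
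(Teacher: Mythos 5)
Your proposal is correct and follows essentially the same route as the paper: the image of $R_p$ (resp.\ $R_q$) under any non-trivial element of $\Stab{G}{p}$ (resp.\ $\Stab{G}{q}$) is a gap disjoint from $\overline{R_p}$ (resp.\ $\overline{R_q}$), and the covering hypothesis $S^1=\overline{R_p\cup R_q}$ then forces it into $U_p$ (resp.\ $U_q$). The paper's proof is just a condensed version of this; your additional bookkeeping with the monotone map and gap endpoints is sound but not a different method.
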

\begin{proof}
    Take $f\in \Stab{G}{q}\setminus \{\id\}$. As $R_q$ is a gap of $q$, the image $f(R_q)$ is also a gap of $q$, whose closure is disjoint from $\overline{R_q}$. Thus, it must be contained in $R_p\setminus \overline{R_q}=U_q$. Arguing similarly, we have $h(R_p)\subset U_p$ for any $h\in \Stab{G}{p}\setminus \{\id\}$. Since $U_p\subset R_q$ and $U_q\subset R_p$, we have the desired result.
\end{proof}

We are now ready to discuss the main result of the section which, combined with \reflem{noHexagon}, provides a complete description of the gaps configurations when $p$ and $q$ are unlinked. 

\begin{thm}\label{Thm:unlinkedCase}
    Let $G$ be a hyperbolic-like group. Assume that $p$ and $q$ are unlinked non-simple points with $p<\overline{q}<q<\overline{p}$. If there is a right gap $R_p=\opi{u}{v}$ of $p$ containing $L(q)$, namely, if $u\leq \overline{q}<q\leq v$, then there are right gaps $R_1$ and $R_2$ of $q$ such that $\opi{p}{u}\subset R_1$ and $\opi{v}{\overline{p}}\subset R_2$.
    
    Moreover, when $R_1\neq R_2$, there is a left gap $L_p=\opi{\alpha}{\beta}$ of $p$ such that $\opi{\beta}{p}\subset R_1$ and $\opi{\overline{p}}{\alpha}\subset R_2$. Otherwise, we have $R_1=R_2$ and $S^1=\overline {R_1\cup R_p}$. 
\end{thm}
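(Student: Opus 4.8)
The plan is to work in the cyclic order $p<\overline q<q<\overline p$ and to notice at the outset that the hypothesis $L(q)=\opi{\overline q}{q}\subset R_p$ puts us in the case complementary to \refprop{outsideConfig}: since $R_p$ is a gap of $p$ we have $\Core(p)\cap L(q)=\varnothing$. From $R_p\subset R(p)$ one also reads off $p\le u\le\overline q<q\le v\le\overline p$, and both arcs $\opi{p}{u}$ and $\opi{v}{\overline p}$ lie in the right side $R(q)$. The first task is to produce the right gaps $R_1\supset\opi{p}{u}$ and $R_2\supset\opi{v}{\overline p}$. Because each of these two arcs is connected and contained in $R(q)$, it suffices to show $\Core(q)\cap\opi{p}{u}=\varnothing$ and $\Core(q)\cap\opi{v}{\overline p}=\varnothing$; each arc then lies in a single gap of $q$, which is forced to be a right gap.

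For the arc $\opi{p}{u}$ I would re-run the argument of \reflem{endCollapseAntiParallel1} essentially verbatim. Assuming $\Core(q)\cap\opi{p}{u}\neq\varnothing$, non-simplicity of $q$ lets me choose $f\in H_-(q)$ with $f(p)<p<f(u)<u$. Then $f(R_p)$ is a gap of $f(p)$ satisfying $\closure{R_p}\cap\{f(p),f(\overline p)\}=\varnothing$ and $\closure{f(R_p)}\cap\{p,\overline p\}=\varnothing$, with $f(R_p)\cap R_p\neq\varnothing$ but $f(R_p)\neq R_p$, so $R_p$ is contained in no gap of $f(p)$. Feeding the gap $R_p$ of $p$ and the point $f(p)$ into \reflem{containGapGeneral} then forces a side condition ($\{p,\overline p\}\subset L(f(p))$) contradicting $p\in R(f(p))$. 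The key observation is that this argument only inspects the left endpoint $u$, so the inequality $q\le v$ (rather than $v\le q$ as in the lemma) never enters. A symmetric argument at the right endpoint $v$, now using $g\in H_+(q)$ with $v<g(v)<\overline p<g(\overline p)$, gives $\Core(q)\cap\opi{v}{\overline p}=\varnothing$ and hence $R_2$.

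For the "moreover" part I would split on whether $R_1=R_2$. If $R_1=R_2=:R$, then $R$ is a single gap of $q$ inside $R(q)$ containing the two arcs $\opi{v}{\overline p}$ and $\opi{p}{u}$; since a gap of $q$ contains neither $q$ nor $\overline q$, the only way to join these arcs within $R(q)$ is through $L(p)$, so $R\supset\opi{v}{u}\supset L(p)$ (the arc through $\overline p$ and $p$), giving $S^1=\closure{R_p\cup R}$. If instead $R_1\neq R_2$, the same connectedness remark forces $\Core(q)\cap L(p)\neq\varnothing$ (otherwise $\opi{v}{\overline p}\cup L(p)\cup\opi{p}{u}$ would be a single gap, merging $R_1$ and $R_2$). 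I would then apply \refprop{outsideConfig} with the roles of $p$ and $q$ exchanged: this is legitimate because $q<\overline p<p<\overline q$ and $p,q$ are unlinked, and its hypothesis becomes exactly $L(p)\cap\Core(q)\neq\varnothing$. It produces the unique left gap $L_p=\opi{\alpha}{\beta}$ of $p$ meeting $\Core(q)$, together with right gaps of $q$ containing $\ropi{\overline p}{\alpha}$ and $\lopi{\beta}{p}$; since the same proposition also yields $\overline p,p\notin\Core(q)$, these two right gaps are the unique gaps of $q$ through $\overline p$ and through $p$, namely $R_2$ and $R_1$. This delivers $\opi{\overline p}{\alpha}\subset R_2$ and $\opi{\beta}{p}\subset R_1$.

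I expect the main obstacle to be twofold. In the first part one must check carefully that the contraction argument of \reflem{endCollapseAntiParallel1}, and in particular the linked/unlinked dichotomy of \reflem{containGapGeneral}, is genuinely insensitive to the position of the right endpoint $v$, and that the mirror argument at $v$ uses the correct semigroup $H_+(q)$ and the correct orientation to reach the same contradiction. In the "moreover" part the delicate step is the clean identification of the two flanking gaps produced by the swapped \refprop{outsideConfig} with the gaps $R_1$ and $R_2$ already constructed; this rests on $\overline p,p\notin\Core(q)$, which guarantees that the gap of $q$ through each of these two points is unique. Everything else is bookkeeping with the cyclic order.
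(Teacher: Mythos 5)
There is a genuine gap in the first (and main) step, namely the construction of $R_1$ and $R_2$. Your plan is to rerun the proof of \reflem{endCollapseAntiParallel1} ``verbatim'', claiming it only inspects the left endpoint $u$. It does not. That proof applies \reflem{containGapGeneral} to the gap $R_p$ of $p$ and the point $f(p)$, and for this it needs $\closure{R_p}\cap\{f(p),f(\overline p)\}=\varnothing$. In the lemma's setting ($v\le q$) this is automatic: $\closure{R_p}\subset\cldi{u}{q}$ while $f(p),f(\overline p)\in\opi{q}{p}$, and these arcs are disjoint. In the theorem's setting one has $q\le v$, so $\closure{R_p}\supset\lopi{q}{v}$, and since $f\in H_-(q)$ pushes $R(q)$ toward $q$, the points $f(\overline p)$ (and possibly $f(p)$) can land in $\lopi{q}{v}\subset\closure{R_p}$. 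Writing $m=m_{q,R(q)}$, you can avoid this only by choosing $\theta(f)\in(m(v)-m(\overline p),0)$; if $m(v)=m(\overline p)$ — i.e.\ a single gap of $q$ contains $\cldi{x'}{\overline p}$ with $x'\le v$ — then \emph{no} $f\in H_-(q)$ works, and your contradiction argument for $\Core(q)\cap\opi{p}{u}=\varnothing$ cannot even start. The ``symmetric argument at $v$'' is worse: it needs $g(p),g(\overline p)\notin\closure{R_p}$ for some $g\in H_+(q)$, i.e.\ $\theta(g)<m(u)-m(p)$; but once the first half has succeeded, $\opi{p}{u}$ lies in a gap of $q$, so $m(u)=m(p)$ and $g(p)\in R_p$ for \emph{every} $g\in H_+(q)$. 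So the two halves obstruct each other, and the second half fails outright.

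The configurations on which your argument breaks are exactly the ones the paper treats with different tools. The paper splits on whether $L(p)\cap\Core(q)=\varnothing$. When it is non-empty it proceeds roughly as in your ``moreover'' discussion (swapped \refprop{outsideConfig} plus \reflem{endCollapseAntiParallel1}), and this part of your proposal is fine. But when $L(p)$ sits inside a right gap $R_q$ of $q$, the paper proves directly that $\overline{R_p\cup R_q}=S^1$, handling the partially overlapping case ($\closure{R_p}\cap\closure{R_q}\neq\varnothing$ with non-empty complement) by exhibiting a composition $fh^{-1}$, $h\in H_+(p)$, $f\in H_-(q)$, with no fixed point, and the disjoint case by a collapsing/local-discreteness argument or a contradiction with \reflem{containGapGeneral}. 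Your proposal has no counterpart to these global arguments, and without them the existence of $R_1$ and $R_2$ is not established.
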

\begin{proof}
    We discuss separately the cases according to whether $L(p)\cap \Core(q)$ is empty or not. The case where $L(p)\cap \Core(q)\neq\varnothing$ is the easiest one. When this happens, after \refprop{outsideConfig}, there is a unique left gap $L_p=\opi{\alpha}{\beta}$ of $p$ and right gaps $R_1$ and $R_2$ of $q$ such that $\lopi{\beta}{p}\subset R_1$ and $\ropi{\overline{p}}{\alpha}\subset R_2$.
    After \reflem{endCollapseAntiParallel1}, we have $\opi{p}{u}\subset R_1$ and $\opi{v}{\overline{p}}\subset R_2$, as desired.
   From now on, we consider the case where $L(p)$ is contained in some right gap $R_q=\opi{x}{y}$ of $q$. Let us prove by contradiction that $\overline{R_q\cup R_p}=S^1$. Hence, let us assume that this is not true, and let us first discuss the case where $\overline{R_p}\cap \overline{R_q}\neq \varnothing$; upon exchanging the roles of $p$ and $q$, we can assume that $\overline{R_p}\cap \overline{R_q}\subset \opi{p}{\overline q}$. That is, we have
   \[p<u\leq y<\overline{q}<q\leq v<x\leq \overline{p}.\]
   Set $K:=S^1\setminus (\overline{R_p}\cup \overline{R_q})=\opi{v}{x}$.
   By \reflem{coincideGapAntiParallel}, we have $\Core(p)\cap K=\Core(q)\cap K$ (and by the choice of $K$ these intersections are non-empty). As $p$ is non-simple, we can take an element $h$ in $H_+(p)$ such that $h(R_p)=\opi{h(u)}{h(v)}\subset K$, and we have that $h(R_p)$ is a gap of both $p$ and $q$. Then, using now that $q$ is non-simple, we can take
   an element $f$ in $H_-(q)$ such that $f(R_q)=\opi{f(x)}{f(y)}\subset \opi{h(v)}{x}$, and again we have that $f(R_q)$ is a gap of both $p$ and $q$. We reach the desired contradiction with the following claim.
    \begin{claim*}
        The composition $fh^{-1}$ has no fixed point.
    \end{claim*}
    \begin{proof}[Proof of the claim]
    After the choices of $h$ and $f$, we have
    \[fh^{-1}(\cldi{v}{h(v)})\subset f(\lopi{p}{v})\subset \opi{f(x)}{v}\subset \opi{h(v)}{v},\]
    and consequently $fh^{-1}(\cldi{v}{h(v)})\cap \cldi{v}{h(v)}=\varnothing$.
    A similar argument gives
    \[hf^{-1}(\cldi{f(x)}{x})\cap \cldi{f(x)}{x}=\varnothing.\]
    Moreover, \[fh^{-1}(R_p)\cap R_p\subset f(R_q)\cap R_p=\varnothing \text{ and } hf^{-1}(R_q)\cap R_q\subset h(R_p)\cap R_q=\varnothing.\]
    Thus, we must have $\Fix(fh^{-1})\subset \opi{h(v)}{f(x)}$. However, consider any point $w\in \opi{h(v)}{f(x)}$. In particular, we have $q\le v< h(v)<w <\overline{p}$. Applying $h^{-1}\in H_-(p)$, we get
        \[q\leq v=h^{-1}(h(v))<h^{-1}(w)<w<\overline{p}.\]
    In particular, we have $q<h^{-1}(w)<w<\overline p$, so that when applying $f\in H_-(q)$, we get $q\leq fh^{-1}(w)<w$. Thus, $w$ is not a fixed point of $fh^{-1}$. This shows that $\opi{h(v)}{f(x)}$ contains no fixed point of $fh^{-1}$, and consequently $fh^{-1}$ has no fixed point, as desired. \qedhere
    \end{proof}

    It remains to discuss the case where $\overline{R_p}\cap \overline{R_q}= \varnothing$. If so, then $S^1\setminus (\overline{R_p}\cup \overline{R_q})=\opi{v}{x} \cup \opi{y}{u}$, with $v\neq x$ and $y\neq u$. Write $K:=\opi{v}{x} \cup \opi{y}{u}$, then by \reflem{coincideGapAntiParallel} we have $\Core(p)\cap K=\Core(q)\cap K$, and these intersections are non-empty.
    If $R_p=L(q)$ and $R_q=L(p)$, then we can proceed as in the proof of \refprop{distinctCore}: consider the induced action of the subgroup $H$ generated by $\Stab{G}{p}$ and $\Stab{G}{q}$ on the topological circle obtained by collapsing gaps of $\Core(p)\cap K$, and use \refprop{locDiscrete} to find an element in $H$ without fixed points. 
    Hence, we must have $R_p\neq L(q)$ or $R_q\neq L(p)$. Without loss of generality, we may say that $R_q\neq L(p)$. Note that after \reflem{endCollapseAntiParallel1}, we must have $R_q\supset \overline{L(q)}$ (otherwise $\overline {R_p}\cap \overline {R_q}\neq\varnothing$).
    Since $\Core(p)\cap \opi{y}{u}\neq\varnothing$ and $\Core(p)\cap \opi{v}{x}\neq \varnothing$, we can take an element $h\in H_+(p)$ such that
    $y<h(y)<u<x< h(x)<\overline{p}$
    and $h(R_p)\subset \opi{v}{x}$. With this choice, $\overline{h(R_q)}$ and $\{h(q),h(\overline q)\}$ are in $R(q)$, and $\overline{R_q}$ and $\{q,\overline q\}$ are in $R(h(q))$. Moreover, the gap $R_q$ of $q$ intersects the gap $h(R_q)$ of $h(q)$, but they do not coincide. This contradicts \reflem{containGapGeneral}.
\end{proof}

We obtain a stronger result when the non-simple points lie in the same orbit. 
\begin{prop}\label{Prop:conjFrameExist}
Let $G$ be a hyperbolic-like group. Suppose that $p$ and $q$ are unlinked non-simple points with $p<\overline{q}<q<\overline{p}$. If $g(p)\in \{q,\overline q\}$ for some $g\in G$, then there are right gaps $R_p$ and $R_q$, of $p$ and $q$ respectively, such that $S^1=\overline {R_p\cup R_q}$.
\end{prop}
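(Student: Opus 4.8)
The plan is to rule out the two non-geometric configurations of \refthm{unlinkedCase} by using the conjugating element $g$ as a symmetry. I treat the case $g(p)=q$ (the case $g(p)=\overline q$ is analogous, with the two sides of $q$ exchanged). Since the companion point is intrinsic to the stabilizer, the relation $g\,\Stab{G}{p}\,g^{-1}=\Stab{G}{q}$ and \refthm{elementary} force $g(\overline p)=\overline q$; as $g$ is orientation-preserving it then maps the sides and cores of $p$ onto those of $q$, so that $g(R(p))=R(q)$, $g(L(p))=L(q)$, $g(\Core(p))=\Core(q)$, and gaps of $p$ go to gaps of $q$. A first consequence I would record is that $g$ has no fixed point on $L(p)$: indeed $g(L(p))=L(q)$, and the two ``self-arcs'' $L(p)=\opi{\overline p}{p}$ and $L(q)=\opi{\overline q}{q}$ are disjoint. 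Hence both fixed points lie in $R(p)\cap R(q)=\opi{p}{\overline q}\cup\opi{q}{\overline p}$, and a short monotonicity check (the displacement of $g$ is positive at $p$ and $\overline p$, and $q=g(p)$ lies between $p$ and the attractor) pins them down as $q<a(g)<r(g)<\overline p$. In particular the orbits $\{g^n(p)\}$ and $\{g^n(\overline p)\}$ both accumulate onto $a(g)$ as $n\to+\infty$ and onto $r(g)$ as $n\to-\infty$.

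Next I would set up the dichotomy. By \reflem{noHexagon} and \refthm{unlinkedCase}, either we are in the geometric case $S^1=\overline{R_p\cup R_q}$ (which is the desired conclusion), or in a non-geometric one; the symmetry $p\leftrightarrow q$ (compatible with the orbit relation through $g^{-1}$) reduces us to excluding the configuration $\Core(p)\cap L(q)=\varnothing$ and $\Core(q)\cap L(p)\neq\varnothing$. Here \refthm{unlinkedCase} gives a single right gap $R_p$ of $p$ with $L(q)\subset R_p$, two \emph{distinct} right gaps $R_1\ni p$ and $R_2\ni\overline p$ of $q$, and a left gap $L_p$ of $p$, the four gaps $R_p,L_p,R_1,R_2$ covering $S^1$. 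Transporting these relations by $g^n$ and writing $x_n=g^n(p)$, I obtain that along the orbit each core avoids the self-arc of its successor, $\Core(x_n)\cap L(x_{n+1})=\varnothing$, while each core pokes into the self-arc of its predecessor, $\Core(x_{n+1})\cap L(x_n)\neq\varnothing$; moreover the companion pairs $\{x_n,\overline{x_n}\}=g^n\{p,\overline p\}$ are pairwise unlinked and collapse to $a(g)$.

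The contradiction must then come from the global dynamics near $a(g)$, and \textbf{this is the main obstacle}: the two local relations above are perfectly consistent along the orbit, so no finite stage yields a contradiction. The self-arcs $L(x_n)=\opi{\overline{x_n}}{x_n}$ shrink to $a(g)$, and the persistent poking forces $\Core(x_{n+1})$ to enter the shrinking arcs $L(x_n)$, producing an infinite nested family of non-geometric gap configurations accumulating at $a(g)$. I would convert this into a failure of local discreteness, following the collapsing argument in the proof of \refprop{distinctCore}: collapse the common gaps of the cores near $a(g)$ and consider the induced action of $H=\langle \Stab{G}{p},g\rangle$ (which contains $\Stab{G}{q}=g\,\Stab{G}{p}\,g^{-1}$) on the resulting circle, where the accumulating pokings should exhibit $H$ as a non-discrete M\"obius-like group, contradicting \refprop{locDiscrete}. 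The delicate point, where most of the work lies, is to show that this collapsed action is genuinely non-discrete (equivalently, to extract from the nested configurations an element of $G$ without fixed points): the naive conjugates $g^nhg^{-n}$ only converge to degenerate limits, so the poking relation must be used to manufacture new nontrivial elements supported on the shrinking self-arcs.
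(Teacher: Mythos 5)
Your proof is incomplete, and you say so yourself: the entire argument hinges on extracting, from the nested non-geometric configurations accumulating at $a(g)$, either an element of $G$ without fixed points or a failure of local discreteness, and you do not produce either. This is not a routine detail to be filled in. The analogy with \refprop{distinctCore} does not transfer: there, non-discreteness of the collapsed action came for free because $\Core(p)=\Core(q)$ forced \emph{every} gap on both sides of $p$ to be collapsed, so that $\Stab{G}{p}$ became a dense translation group on both sides of the image of $p$. Here only the common gaps near $a(g)$ would be collapsed, the stabilizers $\Stab{G}{x_n}$ have no reason to act non-discretely afterwards, and (as you concede) the conjugates $g^nhg^{-n}$ degenerate. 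An infinite family of combinatorially consistent configurations shrinking onto $a(g)$ is not, by itself, an obstruction — you would still need a mechanism turning it into a contradiction, and none is given. A secondary issue: the reduction of $g(p)=\overline q$ to $g(p)=q$ by "exchanging the two sides of $q$" is not innocuous, since in that case $g(L(p))=R(q)$ rather than $L(q)$ and the fixed points of $g$ sit differently (one lands in $L(p)$, one in $L(q)$); the two cases genuinely require separate treatment.

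The paper's proof avoids iteration entirely and is worth comparing against. Assuming $R_1\neq R_2$ in \refthm{unlinkedCase}, one gets a left gap $L_p$ of $p$ with $\Core(q)\cap L_p\neq\varnothing$, and then a \emph{single} auxiliary element $f\in\Stab{G}{q}$ does the job. When $g(p)=\overline q$, one chooses $f$ with $fg(L_p)\subset L_p$, so $a(fg)\in L_p$, while \refprop{configFix} (diagrams 1a/1b, since $\Fix(f)$ and $\Fix(g)$ are linked) forces $a(fg)\in L(q)$, which is disjoint from $L_p$. When $g(p)=q$, one chooses $f$ with $\overline{fg(R_p)}\subset L_p$ and observes $\overline{fg(L_p)}\subset L(q)\subset R_p$, so that $(fg)^2$ has attracting fixed points in both of the disjoint intervals $L_p$ and $R_p$ — impossible for a hyperbolic-like element. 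The lesson is that the conjugator $g$ should be composed with one element of a stabilizer to manufacture a single bad element, rather than iterated to build a limiting configuration.
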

\begin{proof}
    Upon exchanging the roles of $p$ and $q$, by \reflem{noHexagon}, we can assume that there is a right gap $R_p$ of $p$ containing $L(q)$. 
    We want to show that when considering \refthm{unlinkedCase} applied to the points $p$ and $q$, we must have $R_1=R_2$ (for the proof, the intervals $R_p$, $L_p$, and $R_i$ are as in the statement of \refthm{unlinkedCase}).
    Arguing by contradiction, we assume $R_1\neq R_2$. Note that when $g(p)\in \{q,\overline q\}$, then \refcor{commute} ensures that $g(\overline p)=\overline{g(p)}\in \{q,\overline q\}$. Let us first consider the case $g(p)=\overline q$. In this case, we have $g(L(p))=R(q)$, and therefore $r(g)\in L(p)$ and $a(g)\in L(q)$. From this, we see that the interval $g(L_p)$, which is a gap of $q$, is contained in $R(q)$. By non-simplicity of $q$, since $\Core(q)\cap L_p\neq\varnothing$, we can find an element $f\in \Stab{G}{q}$ such that $fg(L_p)\subset L_p$. We conclude that $a(fg)\in L_p$. However, according to \refprop{configFix} applied to the elements $f$ and $g$, whose fixed points are linked, the attracting fixed point of $fg$ must be as described by the diagrams 1a or 1b in Table \ref{Table:fixedPointsComposition}: since $a(g)\in L(q)$, we conclude that $a(fg)\in L(q)$ as well. This contradicts the previous observation that $a(fg)\in L_p$.
    
    Let us next consider the case $g(p)=q$, that is, we assume $g(L(p))=L(q)$. If so, on the one hand, from $g(L(p))\cap L(p)=L(q)\cap L(p)=\varnothing$, we deduce that $g(L(q)\cap L(p))=g(L(q))\cap L(q)=\varnothing$, and therefore, $g(R_p)$, which is a gap of $q$ that contains $g(L(p))$, is contained in $R(q)=S^1\setminus\overline{L(q)}$. By non-simplicity of $q$, since $\Core(q)\cap L_p\neq\varnothing$, we can find an element $f\in \Stab{G}{q}$ such that
    \begin{equation}\label{Eqn:conjFrameExist1}
        \overline{fg(R_p)}\subset L_p.
    \end{equation}
    On the other hand, we have that the interval $g(L_p)\subset g(L(p))=L(q)$ is a left gap of $q$, and thus so is $fg(L_p)$. We deduce that
    \begin{equation}\label{Eqn:conjFrameExist2}
        \overline{fg(L_p)}\subset L(q)\subset R_p.
    \end{equation}
    Combining \eqref{Eqn:conjFrameExist1} and \eqref{Eqn:conjFrameExist2} together, we deduce that $(fg)^2$ has an attracting fixed point in $L_p$ and another one in $R_p$. This contradicts the assumption that $G$ is hyberbolic-like.
    \qedhere   
\end{proof}

In fact, \refprop{conjFrameExist} can be used to produce a proper ping-pong partition for the remaining unlinked cases. 

\begin{cor}\label{Cor:parallelPingPong}
    Let $G$ be a hyperbolic-like group. Suppose that $p$ and $q$ are unlinked non-simple points with $p<\overline{q}<q<\overline{p}$. Suppose that there are intervals $I_i=\opi{u_i}{v_i}$, with $i\in \ZZ/4\ZZ$, satisfying the following:
    \begin{itemize}
        \item $I_1$ and $I_3$ are gaps of $p$, 
        \item $I_2$ and $I_4$ are gaps of $q$,
        \item we have
        \[
        p<u_1\le v_4<\overline q<u_2\le v_1<u_3\le v_2<q<u_4\le v_3<\overline p<p.
        \]
    \end{itemize}
    Then, setting $U_p=I_2\cup I_4$ and $U_q=(I_1\cup I_3)\setminus \overline {I_2\cup I_4}$, we have that $(U_p,U_q)$ is a proper ping-pong partition for $\Stab{G}{p}$ and $\Stab{G}{q}$.
\end{cor}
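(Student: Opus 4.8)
The plan is to verify the two ping-pong inclusions directly from the hypotheses, using only the fact that each $I_i$ is a gap of the appropriate stabilizer together with the cyclic ordering of the endpoints. Let me set up the approach and identify where the real work lies.

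First I would observe what the hypotheses are telling us geometrically. The four intervals are arranged so that $I_1$ and $I_3$ (gaps of $p$) are interleaved with $I_2$ and $I_4$ (gaps of $q$) around the circle, with the ordering $p<u_1\le v_4<\overline q<u_2\le v_1<u_3\le v_2<q<u_4\le v_3<\overline p<p$. Crucially, $p,\overline p$ lie in the closures of the gaps of $q$ on the "outside" and $\overline q, q$ lie where $I_1,I_3$ (gaps of $p$) sit, matching the configuration of \emph{non-geometric case 1} from \refthm{unlinked}. The key structural input I would extract is that $\{p,\overline p\}\subset \overline{I_2\cup I_4}$ and $\{q,\overline q\}$ lies in the region covered by $\overline{I_1\cup I_3}$, which lets us control where the stabilizer images land.

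The core of the argument is the following. For $\Stab{G}{q}$: any nontrivial $f\in\Stab{G}{q}$ maps each gap of $q$ to another gap of $q$, so $f(I_1)$ and $f(I_3)$ are gaps of $q$; I would argue that since $\Fix(f)=\{q,\overline q\}$ and these points lie in $\overline{I_1\cup I_3}$, the images $f(U_p)=f(I_2\cup I_4)$ must be pushed into the complementary region, landing inside $(I_1\cup I_3)\setminus\overline{I_2\cup I_4}=U_q$ — here I would use that $f$ acts as a homeomorphism fixing $q,\overline q$ and that $I_2,I_4$ are carried off their own locations. For $\Stab{G}{p}$: symmetrically, any nontrivial $h\in\Stab{G}{p}$ has $\Fix(h)=\{p,\overline p\}\subset\overline{I_2\cup I_4}$, and $h$ maps the gaps $I_1,I_3$ of $p$ to other gaps of $p$; since $U_q\subset I_1\cup I_3$ consists of gaps of $p$, its image $h(U_q)$ is again a union of gaps of $p$ disjoint from the original, and the position of the fixed points $p,\overline p$ forces these images into $U_p=I_2\cup I_4$. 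The mechanism in both directions is the standard one from \refprop{pingPongUnlinked} and \refcor{pingPongLinked}: a nontrivial element moves every gap off itself, and the location of the two fixed points determines which "side" the images are attracted to.

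The main obstacle I anticipate is not the topological bookkeeping but verifying that the images land in the correct \emph{half} of the partition, specifically that $f(I_2\cup I_4)$ avoids $\overline{I_2\cup I_4}$ and lands in $U_q$ rather than spilling into $\overline{I_1\cup I_3}$ improperly. To handle this cleanly I would invoke \refprop{conjFrameExist} or the structure of \refthm{unlinkedCase} to guarantee that the configuration is genuinely the one described, so that $U_q$ together with $\overline{U_p}$ covers the relevant arcs and the attracting dynamics of $f$ (with $a(f),r(f)\in\{q,\overline q\}$) necessarily sends $I_2,I_4$ toward the fixed points and hence into $I_1\cup I_3$. Once the inclusions $(\Stab{G}{p}\setminus\{\id\})(U_q)\subset U_p$ and $(\Stab{G}{q}\setminus\{\id\})(U_p)\subset U_q$ are established, disjointness of $U_p$ and $U_q$ is immediate from the ordering hypothesis, and properness (finitely many components) is clear since each $U_p,U_q$ is a union of at most two intervals. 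The conclusion that $(U_p,U_q)$ is a proper ping-pong partition then follows directly from the definition.
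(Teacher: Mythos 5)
You have the easy half right: for nontrivial $f\in\Stab{G}{q}$, the intervals $I_2$ and $I_4$ are gaps of $q$, so $f(I_2)$ and $f(I_4)$ are gaps of $q$ translated towards $\overline q$ (or towards $q$ for $f^{-1}$), and the ordering of the endpoints places them inside $I_1\setminus\overline{I_2\cup I_4}$ (resp.\ $I_3\setminus\overline{I_2\cup I_4}$); this is exactly what the paper does, and it needs no appeal to \refprop{conjFrameExist}. The genuine difficulty, which your proposal does not address, lies in the other direction. For $h\in\Stab{G}{p}$ the sets $I_1$ and $I_3$ are \emph{both} gaps of $p$, and $h$ permutes the gaps of $p$; so while $h(I_1)$ is disjoint from $I_1$, nothing prevents $h(I_1)=I_3$, in which case $h(I_1)$ is certainly not contained in $U_p=I_2\cup I_4$ and your claim that ``the position of the fixed points $p,\overline p$ forces these images into $U_p$'' fails. (When $h(I_1)\cap\overline{I_3}=\varnothing$ the argument does work: $h(I_1)$ then lies in one component of $\opi{v_1}{\overline p}\setminus\overline{I_3}$, and each such component is contained in $I_2$ or in $I_4$.)

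In the exceptional case $h(I_1)=I_3$ (equivalently $h^{-1}(I_3)=I_1$) one must show that the smaller set $I_1\setminus\overline{I_2\cup I_4}$ --- the part of $I_1$ that actually belongs to $U_q$ --- is still sent into $I_2\cup I_4$. This cannot be read off from the gap structure of $p$ alone, because $I_2$ and $I_4$ are gaps of $q$, which $h$ does not permute. The paper devotes essentially its entire proof to this case: it locates $h(\overline q)$ inside $I_3$, splits according to whether $q$ and $h(q)$ are unlinked ($h(\overline q)\in\opi{q}{v_3}$, handled via \refprop{conjFrameExist}) or linked ($h(\overline q)\in\opi{u_3}{q}$, handled via \refcor{pingPongLinked} together with two claims showing that any other configuration would force $(fh)^2$, for a suitably chosen $f\in H_+(q)$, to have repelling fixed points in two disjoint intervals, contradicting hyperbolicity), and only then concludes $h(I_1\setminus\overline{I_2\cup I_4})\subset U_p$. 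None of this machinery, nor any substitute for it, appears in your sketch; your invocation of \refprop{conjFrameExist} and \refthm{unlinkedCase} is aimed at the $f$-direction, which does not need it. (A minor slip besides: $f(I_1)$ and $f(I_3)$ are not gaps of $q$, since $I_1$ and $I_3$ are gaps of $p$.)
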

\begin{proof}
    For any $f\in H_+(q)$,  since $I_2$ and $I_4$ are gaps of $q$, we have
    \[f(U_p)=f(I_2\cup I_4)\subset I_1\setminus \overline{I_2\cup I_4}\subset U_q\quad \text{and}\quad f^{-1}(U_p)=f^{-1}(I_2\cup I_4)\subset I_3\setminus \overline {I_2\cup I_4}\subset U_q.\]
    Similarly, for any $h\in H_+(p)$, we have $h(I_3)\subset I_4\subset U_q$ and $h^{-1}(I_1)\subset I_4\subset U_q$. If $h(I_1)$ is not contained in $I_2\cup I_4=U_q$, then we must have $h(I_1)=I_3$ (and consequently $h^{-1}(I_3)=I_1$).
    Assume that this is the case.   
   Note that we have $\overline q\in h(I_4)$ and $h({q})\in h(I_3)\subset I_4$. Since $h(\overline q)\in h(I_1)=I_3$, one of the following occurs:
   \begin{enumerate}[label=(\arabic*)]
        \item $h(\overline q)\in \opi{{q}}{v_3}$;
        \item $h(\overline q)\in \opi{u_3}{{q}}$ (note that we cannot have $h(\overline q)=q$, otherwise $\Stab{G}{q}\neq \Stab{G}{\overline q}$, contradicting \refcor{commute}).
   \end{enumerate}
   In the first case, the non-simple points $q$ and $h(q)$ are unlinked, and the configuration of gaps is described by \refprop{conjFrameExist}: since $\overline q$ and $h(q)$ are contained, respectively, in the gap $h(I_4)$ of $h(q)$, and in the gap $I_4$ of $q$, we have $S^1=\overline{I_4\cup h(I_4)}$. This ensures that $h(I_1\setminus \overline{I_4})\subset I_4\subset U_q$. Similarly, from $S^1=\overline{h^{-1}(I_4)\cup I_4}$, we get that $h^{-1}(I_3\setminus \overline{I_4})\subset I_4\subset U_q$. Thus, in the first case we have shown that $(U_p,U_q)$ is a proper ping-pong partition for $\Stab{G}{p}$ and $\Stab{G}{q}$.
   
   In the second case, the non-simple points $q$ and $h(q)$ are linked, and consequently their gaps must be arranged according to \refcor{pingPongLinked}. Let us see which restrictions we get from this.
   
    \begin{claim}
        If $h(\overline q)\in \opi{u_3}{q}$, then $h(\overline q)\in \opi{u_3}{v_2}=I_2\cap I_3$.
    \end{claim}

    \begin{proof}[Proof of the claim]\label{Clm:parallelPingPong}
        We argue by way of contradiction. Note that according to \refcor{pingPongLinked} applied to $q$ and $h(q)$, we cannot have $v_2=h(q)$, because otherwise $h(q)$ would belong to the boundary of a gap of $q$. Thus, if $h(\overline q)\notin \opi{u_3}{v_2}$, we must have $h(\overline q)\in \opi{v_2}{q}$. Then, according to \refcor{pingPongLinked}, there is a gap $K_1=\opi{\alpha_1}{\beta_1}$ of $q$, distinct from $I_2$, such that $h(\overline q)\in K_1$. Since $q$ is non-simple, we can consider an element $f\in H_+(q)$ such that $f(K_1)\subset \opi{v_2}{\alpha_1}$. Using \refcor{pingPongLinked} again (for $q$ and $h(q)$), we take a gap $K_2=\opi{\alpha_2}{\beta_2}$ of $q$ such that $q\in h(K_2)$ and $S^1=\overline{h(I_4)\cup K_1\cup h(K_2)\cup I_4}$; note that  $K_2=I_2$ or $K_2\subset \opi{\overline q}{u_2}\subset I_1$, and in either case we have $I_2\cup K_2\subset h(I_4)$. Since $h(K_2)$ contains $\opi{q}{u_4}$ and $f\in H_+(q)$, we see that
        \begin{equation}\label{Eqn:parallelPingPong3}
            fh(K_2)\supset \overline{I_4}.
        \end{equation}
        On the other hand, let us see where $fh(I_4)$ is: the endpoint $h(u_4)$ of $h(I_4)$ belongs to $\overline K_1$, and the other endpoint $h(v_4)$ belongs to $\overline {I_4}$; then $fh(u_4)\in \overline{f(K_1)}$ and $fh(v_4)\in f(I_4)$. We deduce that
        \begin{equation}\label{Eqn:parallelPingPong4}
            fh(I_4)\supset \overline{K_2}.
        \end{equation}
        Combining \eqref{Eqn:parallelPingPong3} and \eqref{Eqn:parallelPingPong4} together, we see that $(fh)^2$ has repelling fixed points in both $K_2$ and $I_4$. Since these intervals are disjoint, this contradicts the assumption that $G$ is hyperbolic-like.
    \end{proof}
   
   \begin{claim}\label{Clm:pingPongLinked2}
        If $h(\overline q)\in \opi{u_3}{q}$, then $q\in h(I_2)$. Consequently, $h(I_1\setminus \overline{I_2})\subset I_2$ and $h^{-1}(I_3\setminus \overline {I_2})\setminus I_2$.
   \end{claim}
   \begin{proof}[Proof of the claim]
        Under the assumption, we know from \refclm{parallelPingPong} that $h(\overline q)\in I_2\cap I_3$.
       We adapt the proof of the previous claim. If, by contradiction, we have $q\notin h(I_2)$, then after \refcor{pingPongLinked} there is a gap $J=\opi{\alpha}{\beta}$ of $q$, distinct from $I_2$, such that $q\in h(J)$. From the condition $h(\overline q)\in I_2\cap I_3$, we see that $\overline q\in h(I_4)$, $h(\overline q)\in I_2$, $h(q)\in I_4$, and so, after \refcor{pingPongLinked}, we have $S^1=\overline {h(I_4)\cup I_2\cup h(J)\cup I_4}$. Observe also that since $h(I_3)\subset I_4$ and $h(I_2)\cap h(I_3)\neq\varnothing$, the condition $I_2\neq J$ gives $h(I_2)\subset I_4$ (again, this can be seen as a consequence of \refcor{pingPongLinked}). From this we see that $J\subset \opi{\overline q}{u_2}$. Now, by non-simplicity of $q$, we can take an element $f\in H_+(q)$ such that $f(I_2)$, which is a gap of $q$, is contained in $\opi{\beta}{u_2}$; since $f\in H_+(q)$, the gap $f(I_4)$ of $q$ is contained in $\opi{v_4}{\overline q}$. The gap $h(J)=\opi{h(\alpha)}{h(\beta)}$ contains the repelling fixed point of $f$; using also that $h(\beta)\in \overline {I_4}$ and $f(I_4)\subset \opi{v_4}{\overline q}$, we deduce that \begin{equation}\label{Eqn:parallelPingPong1}
       fh(J)\supset \overline{I_4}.
       \end{equation}
       On the other hand, let us see where $fh(I_4)$ is: the endpoint $h(v_4)$ of $h(I_4)$ belongs to $\overline{I_2}$, and the other endpoint $h(u_4)$ belongs to $I_4$; then $fh(u_4)\in \overline{f(I_2)}$ and $fh(v_4)\in f(I_4)$. We deduce that 
       \begin{equation}\label{Eqn:parallelPingPong2}
           fh(I_4)\supset \overline J.
       \end{equation}
       However, combining \eqref{Eqn:parallelPingPong1} and $\eqref{Eqn:parallelPingPong2}$, Combining \eqref{Eqn:parallelPingPong3} and \eqref{Eqn:parallelPingPong4} together, we see that $(fh)^2$ has repelling fixed points in both $J$ and $I_4$. Since these intervals are disjoint, this contradicts the assumption that $G$ is hyperbolic-like.

       Now, applying \refcor{pingPongLinked} to $q$ and $h(q)$, we see that the fact that $q\in h(I_2)$ and $h(\overline q)\in I_2$ implies $h(u_2)\in \lopi{h(\overline q)}{v_2}\subset \overline{I_2}$. Considering that $h(u_1)=u_3\in \overline {I_2}$, this gives the first desired inclusion $h(I_1\setminus \overline{I_2})=\opi{h(u_1)}{h(u_2)}\subset I_2$. For the other inclusion, observe that $I_3\setminus \overline {I_2}=\opi{v_2}{v_3}\in h(I_2)$, and thus $h^{-1}(I_3\setminus \overline{I_2})\subset I_2$, as desired.
   \end{proof}

    The inclusion relations from \refclm{pingPongLinked2} allow to conclude that $(U_p,U_q)$ is a proper ping-pong partition for $\Stab{G}{p}$ and $\Stab{G}{q}$. This ends the proof.
   \end{proof}
   \setcounter{claim}{0}

\appendix

\section{Configurations of fixed points}

Here we discuss the possible configurations of fixed points of two given distinct elements and their compositions in a hyperbolic-like group $G$. Recall that when $g\in G$ is a non-trivial element, we write $a(g)\in S^1$ for its attracting fixed point, and $r(g)$ for the repelling one.

\begin{prop}\label{Prop:configFix}
    Let $f$ and $g$ two non-commuting elements in a hyperbolic-like group $G$. Then, the relative positions of the attracting and repelling fixed points of the elements $f$, $g$, $fg$, and $gf$ are described by the 14 diagrams in Table \ref{Table:fixedPointsComposition}.
\end{prop}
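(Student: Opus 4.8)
The plan is to treat $fg$ first and to recover $gf$ almost for free. Since $f$ and $g$ do not commute, \refcor{commute} gives $\Fix(f)\cap\Fix(g)=\varnothing$, so the four points $a(f),r(f),a(g),r(g)$ are distinct; moreover $fg\neq\id$ (otherwise $g=f^{-1}$ would commute with $f$) and likewise $gf\neq\id$, so both $fg$ and $gf$ are hyperbolic-like, each with a well-defined attractor and repeller. The conjugacy identities $gf=g(fg)g^{-1}$ and $fg=f(gf)f^{-1}$ give $a(gf)=g(a(fg))$ and $r(gf)=g(r(fg))$, so once the positions of $a(fg)$ and $r(fg)$ are known, those of $a(gf)$ and $r(gf)$ follow by applying $g$ and using its monotonicity. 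I would also exploit the symmetry of the statement under the exchange $(f,g)\mapsto(g,f)$ (which swaps the roles of $fg$ and $gf$) and under reversing the orientation of $S^1$, in order to cut down the list of cyclic arrangements of $a(f),r(f),a(g),r(g)$ that must be examined; the coarsest split is into the \emph{linked} and \emph{unlinked} cases for the pairs $\{a(f),r(f)\}$ and $\{a(g),r(g)\}$.

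The engine of the proof is the North--South dynamics of each element: for hyperbolic-like $g$, each of the two open arcs $\opi{r(g)}{a(g)}$ and $\opi{a(g)}{r(g)}$ is $g$-invariant, and for $x$ in the arc $\opi{r(g)}{a(g)}$ one has $g(x)\in\opi{x}{a(g)}$ (and symmetrically on the other arc); that is, $g$ moves every non-fixed point strictly toward $a(g)$ along the arc avoiding $r(g)$. The analogous statement holds for $f$, and for the inverses with the roles of attractor and repeller interchanged. Using these monotonicity relations I would perform interval arithmetic on the arcs cut out by the four base points: in each configuration one singles out an arc $I$ such that $g(I)$ lands in a controlled region on which $f$ then maps it strictly into $I$, so that $fg(I)\subsetneq I$ and the trapped fixed point is forced to be $a(fg)$; applying the same reasoning to $(fg)^{-1}=g^{-1}f^{-1}$ locates $r(fg)$. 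Because $fg$ is hyperbolic-like it has \emph{exactly} two fixed points, so these two localizations are exhaustive, and the cyclic order of $a(fg),r(fg)$ relative to the base points is thereby pinned down.

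Finally I would run the case analysis: enumerate the reduced combinatorial types of the quadruple $a(f),r(f),a(g),r(g)$, apply the trapping argument in each to locate $a(fg),r(fg)$, then read off $a(gf)=g(a(fg))$ and $r(gf)=g(r(fg))$ through the monotonicity of $g$, and record each resulting arrangement; collecting the distinct outcomes produces the diagrams of Table~\ref{Table:fixedPointsComposition}. I expect the main obstacle to be the bookkeeping: carrying the interval inclusions through every configuration without orientation errors, verifying that the symmetry reductions neither lose nor duplicate a configuration, and --- most delicately --- handling the borderline situations in which a composition fixed point is a priori only confined to the union of two adjacent arcs. These refinements, resolved by pushing the monotonicity inequalities one step further (or by testing both possibilities against the constraint that $fg$ and $gf$ are hyperbolic-like), are presumably exactly what the sub-labels (1a/1b, 2d/2e, 3b/3c, \dots) in the table encode, and making the total count come out to $14$ is where the care is needed.
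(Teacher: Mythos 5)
Your strategy coincides with the paper's for the ``concordant'' configurations (row 1 of Table~\ref{Table:fixedPointsComposition}): when some arc $I$ joining $a(f)$ to $a(g)$ avoids both repellers, one gets $fg(I)\cup gf(I)\subset I$, traps both attractors there, orders them inside $I$ via $a(gf)=g(a(fg))$, and treats the repellers symmetrically with the arc joining $r(f)$ to $r(g)$. That part of your plan is exactly what the paper does.

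The gap is in the discordant case $a(f)<r(g)<a(g)<r(f)$ (rows 2 and 3, i.e.\ ten of the fourteen diagrams). There, no arc cut out by the four base points is mapped into itself by $fg$, and --- more fundamentally --- the positions of $\Fix(fg)$ and $\Fix(gf)$ are \emph{not} determined by the cyclic arrangement of $a(f),r(f),a(g),r(g)$: the five diagrams 2a--2e all arise from this single arrangement. So ``enumerate the combinatorial types of the quadruple and apply the trapping argument in each'' cannot produce the table; what has to be proved is that only these five \emph{joint} configurations of $\Fix(fg)$ and $\Fix(gf)$ occur, and excluding the other a priori combinations is the actual content. The conjugacy $a(gf)=g(a(fg))$ transfers information only once $a(fg)$ has been pinned to a single arc, which is precisely what is unavailable here. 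The paper resolves this by a different mechanism: it reads $\Fix(fg)$ as the crossings of the graphs of $g$ and $f^{-1}$, and $\Fix(gf)$ as the crossings of the graphs of $f$ and $g^{-1}$, and it constrains the possible crossing patterns by using that the \emph{auxiliary} composition $f^{-1}g$ is also hyperbolic-like, so the graphs of $f$ and $g$ cross exactly twice, once in $\opi{a(f)}{r(g)}$ and once in $\opi{a(g)}{r(f)}$; this pins down the relative position of the two graphs on each complementary arc and leaves exactly the cases 2a, 2b/2c, 2d/2e. Your proposal invokes hyperbolicity only of $fg$ and $gf$, and ``pushing the monotonicity inequalities one step further'' does not substitute for this correlation between the two crossing patterns. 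To complete the discordant case you would need to add the graph-crossing step (or an equivalent argument using the two fixed points of $f^{-1}g$ as dividers of the circle).
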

\begin{proof}
Assume first that there is an open interval $I$ whose endpoints are the attracting fixed points $a(f)$ and $a(g)$ and such that the repelling fixed points $r(f)$ and $r(g)$ are not in $I$ (these are the cases described by the first row of Table \ref{Table:fixedPointsComposition}.
If so, we have $f(I)\cup g(I)\subset I$, and consequently $fg(I)\subset I$ and $gf(I)\subset I$. We deduce that the attracting fixed points $a(fg)$ and $a(gf)$ are in $I$. Moreover, the relation $g(a(fg))=a(gf)$ shows that $a(gf)$ is between $a(fg)$ and $a(g)$ in $I$. Similarly, the repelling fixed points are located considering the interval $J$ between the repelling fixed points of $f$ and $g$.

Assume now that we are not in the previous case. This corresponds to the other two rows in Table \ref{Table:fixedPointsComposition}. Note that the third row can be deduced from the second by replacing $f$ and $g$ with $f^{-1}$ and $g^{-1}$, respectively. Note also that the columns d and e in the second row can be deduced from the columns b and c, respectively, by exchanging the roles of $f$ and $g$. To treat the remaining cases 2a, 2b, and 2c, we study how the graphs of the homeomorphisms $f^{\pm 1}$ and $g^{\mp 1}$ intersect: indeed, fixed points of $fg$ correspond to points $x\in S^1$ such that $g(x)=f^{-1}(x)$; similarly, fixed points of $gf$ correspond to points $x\in S^1$ such that $f(x)=g^{-1}(x)$. The nature of these fixed points is deduced by looking at the way in which the graphs of the homeomorphisms cross: for instance, if $x$ is a fixed point of $gf$ and $f(y)<g^{-1}(y)$ for any point $y$ sufficiently close to $x$, and to the right of $x$, then $x$ is the attracting fixed point of $gf$.

Let us proceed to the proof. Recall that we have chosen the counterclockwise orientation of the circle. The cases in the second row correspond to the configuration
$a(f)<r(g)<a(g)<r(f)$. If this is the case, then the graph of $g$ crosses the graph of $f$ in the intervals $\opi{a(f)}{r(g)}$ and $\opi{a(g)}{r(f)}$, and there are no further crossings, since the element $f^{-1}g$ is hyperbolic.
We can assume that the graph of $g$ crosses the graph of $f^{-1}$ in the interval $\opi{a(f)}{r(f)}$, otherwise we exchange the roles of $f$ and $g$. This can happen in two distinct ways.
\begin{enumerate}
    \item The graph of $g$ crosses the graph of $f^{-1}$ in the interval $\opi{a(g)}{r(g)}$, and one checks that this situation is described by the diagram 2a in Table \ref{Table:fixedPointsComposition}. This situation is sketched in Figure \ref{Fig:crossings2a}.

    \item The graph of $g$ crosses the graph of $f^{-1}$ in the interval $\opi{r(g)}{a(g)}$. This leads to the diagrams 2b and 2c in Table \ref{Table:fixedPointsComposition}. Note that the two cases depend on the relative position of the first crossing of the graphs of $f$ and $g^{-1}$ and the second crossing of the graphs of $f^{-1}$ and $g$. These situations are sketched in Figure \ref{Fig:crossings2bc}.\qedhere

\end{enumerate}
\end{proof}

\begin{table}[ht]
\centering
\begin{tabular}{|r|c|c|c|c|c|}
    \hline
    & a & b & c & d & e\\ \hline 
    1 & \includegraphics[angle=135,origin=c,scale=.315]{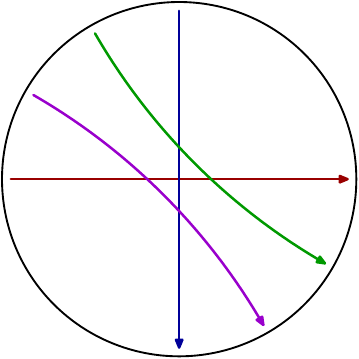} & \includegraphics[angle=45,origin=c,scale=.315]{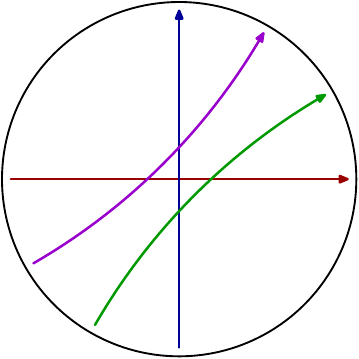} &
    \includegraphics[angle=135,origin=c,scale=.315]{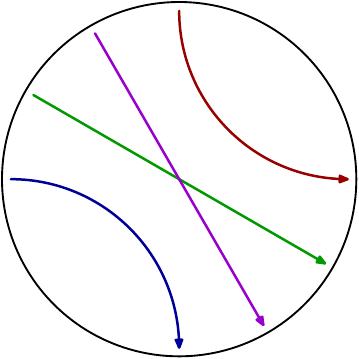} & \includegraphics[angle=-45,origin=c,scale=.315]{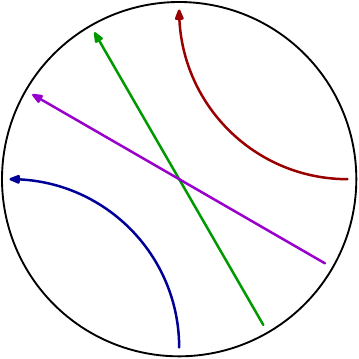} &\\ \hline
    2 & \includegraphics[angle=-45,origin=c,scale=.315]{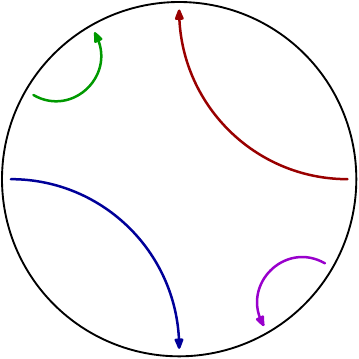} & \includegraphics[angle=-45,origin=c,scale=.315]{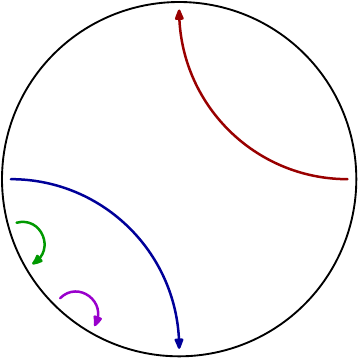} & \includegraphics[angle=-45,origin=c,scale=.315]{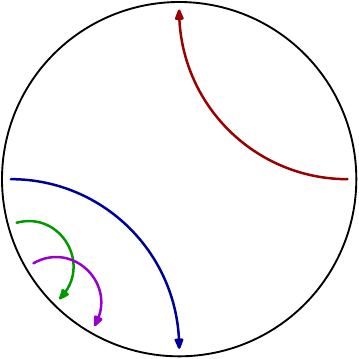}& \includegraphics[angle=-45,origin=c,scale=.315]{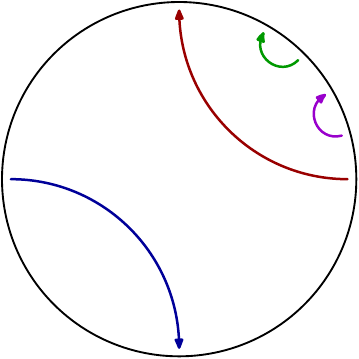} & \includegraphics[angle=-45,origin=c,scale=.315]{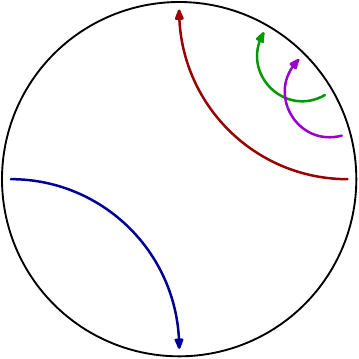}\\ \hline
    3 &\includegraphics[angle=-45,origin=c,scale=.315]{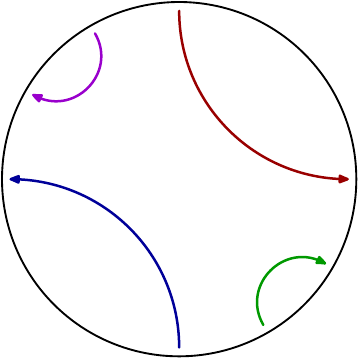} & \includegraphics[angle=-45,origin=c,scale=.315]{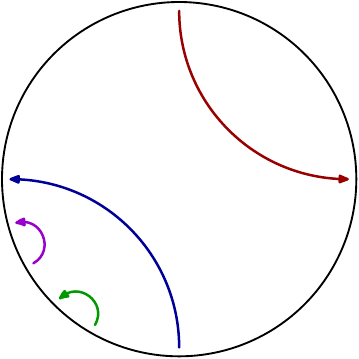} & \includegraphics[angle=-45,origin=c,scale=.315]{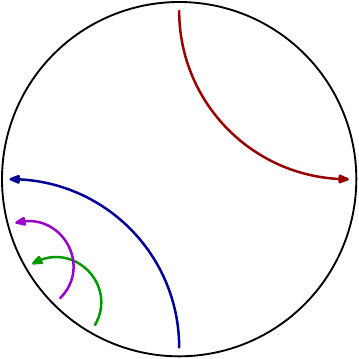}& \includegraphics[angle=-45,origin=c,scale=.315]{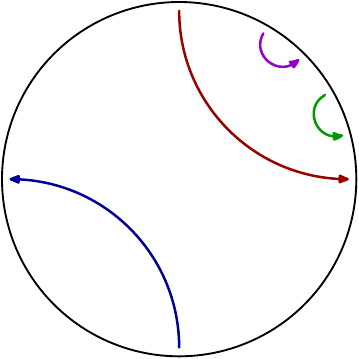} & \includegraphics[angle=-45,origin=c,scale=.315]{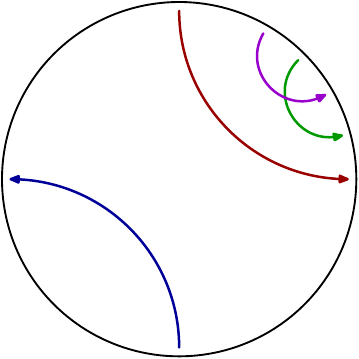}\\ \hline
\end{tabular}

\caption{Possible configurations of attracting and repelling fixed points of two elements $f$ (red), $g$ (blue), and their compositions $fg$ (green), $gf$ (purple). Arrows go from the repelling fixed point to the attracting fixed point of the corresponding element.}\label{Table:fixedPointsComposition}
\end{table}

\begin{figure}[ht]
    \centering
    \includegraphics[width=0.5\linewidth]{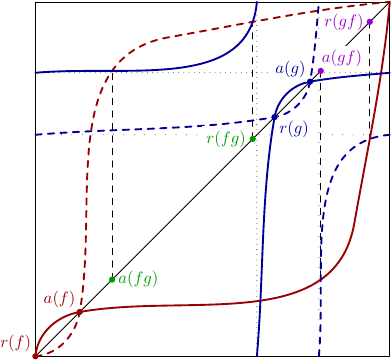}
    \caption{Crossings of graphs of $f^{\pm 1}$ (red) and $g^{\mp1}$ (blue) corresponding to the diagram 2a in Table~\ref{Table:fixedPointsComposition}. The dashed graphs are those of the inverses $f^{-1}$ and $g^{-1}$.}
    \label{Fig:crossings2a}
    \end{figure}

    \begin{figure}[ht]
    \centering
    \includegraphics[width=0.45\linewidth]{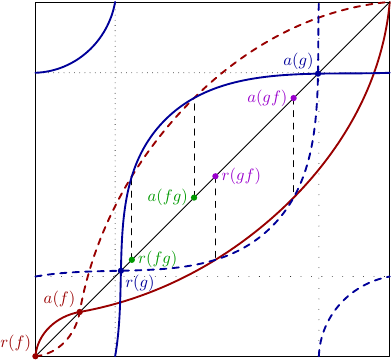}\hspace{2em}\includegraphics[width=0.45\linewidth]{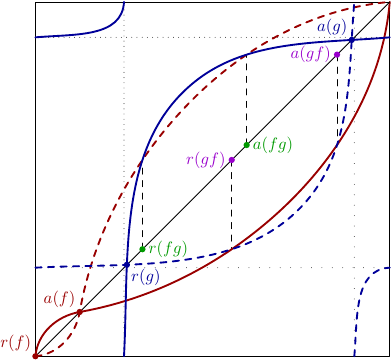}
    \caption{Crossings of graphs of $f^{\pm 1}$ (red) and $g^{\mp1}$ (blue) corresponding to the diagrams 2b (left) and 2c (right) in Table \ref{Table:fixedPointsComposition}. The dashed graphs are those of the inverses $f^{-1}$ and $g^{-1}$.}
    \label{Fig:crossings2bc}
    \end{figure}

We use the previous result to analyze the configuration of fixed points of commutators of elements from different stabilizers. Recall that we write $[f,g]=fgf^{-1}g^{-1}$ for any $f,g\in \Homeop(S^1)$, and that given a point $p\in S^1$, we write $H_+(p)=\{h\in G: a(h)=p\}$. When $H_+(p)$ is non-empty, we write $\overline p$ for the other fixed point of any $h\in H_+(p)$ (this does not depend on the choice of such $h$).

\begin{prop}\label{Prop:fixCommutator}
    Let $G$ be a hyperbolic-like group. Suppose that $p$ and $q$ are points with non-trivial stabilizers such that $p<q<\overline{p}<\overline{q}<p$.
    Then, for any $f\in H_+(q)$ and $h\in H_+(p)$, one of the following occurs. (See Figure \ref{Fig:fixCommutator}.)
    \begin{enumerate}[label=(\arabic*)]
        \item\label{Itm:geometric} \emph{Geometric configuration}. We have
    \[ \begin{array}{cccccccccccc}
        &p&<&h^{-1}(q)&<&r([f^{-1},h^{-1}])&<&a([f^{-1},h^{-1}])&<&f^{-1}(p)&\\
        <&q&<&f^{-1}(\overline p)&<&r([h,f^{-1}])&<&a([h,f^{-1}])&<&h(q)&\\
        <&\overline p&<&h(\overline q)&<&r([f,h])&<&a([f,h])&<&f(\overline p)&\\
        <&\overline q&<& f(p)&<& r([h^{-1},f])&<&a([h^{-1},f])&<&h^{-1}(q)&<&p.
    \end{array}\]

    \item \emph{Non-geometric configurations}.
    \begin{enumerate}[label=(2.\Roman*)]
        \item\label{Itm:nonGeometricI}  \emph{Fixed points of all commutators in $\opi{p}{q}$.} We have    
    \[
    p<f^{-1}(p)<a([f^{-1},h^{-1}])<r([f^{-1},h^{-1}])<h^{-1}(q)<q,
    \]
    in which case, we also have
    \begin{align*}
    &\Fix([h^{-1},f])=f(\Fix([f^{-1},h^{-1}]))\subset \opi{p}{fh^{-1}(q)},
    \\
    &\Fix([h,f^{-1}])=h(\Fix([f^{-1},h^{-1}]))\subset \opi{hf^{-1}(p)}{q},
    \\
    &\Fix([f,h])=f(\Fix([h,f^{-1}]))=h(\Fix([h^{-1},f]))\subset \opi{fhf^{-1}(p)}{hfh^{-1}(q)}.
    \end{align*}
    
    \item\label{Itm:nonGeometricII} \emph{Fixed points of all commutators in $\opi{q}{\overline p}$.} We have
    \[
    q<h(q)<a([h,f^{-1}])<r([h,f^{-1}])<f^{-1}(\overline p)<\overline p,
    \]
    in which case, we also have
     \begin{align*}
    &\Fix([f,h])=f(\Fix([h,f^{-1}]))\subset \opi{fh(q)}{\overline p},
    \\
    &\Fix([f^{-1},h^{-1}])=h^{-1}(\Fix([h,f^{-1}]))\subset \opi{q}{h^{-1}f^{-1}(\overline p)},
    \\
    &\Fix([h^{-1},f])=h^{-1}(\Fix([f,h]))=f(\Fix([f^{-1},h^{-1}]))\subset \opi{h^{-1}fh(q)}{fh^{-1}f^{-1}(\overline p)}.
    \end{align*}
    \item\label{Itm:nonGeometricIII} \emph{Fixed points of all commutators in $\opi{\overline p}{\overline q}$.} We have
    \[
    \overline p<f(\overline p)<a([f,h])<r([f,h])<h(\overline q)<\overline q,
    \]
    in which case, we also have
    \begin{align*}
    &\Fix([h,f^{-1}])=f^{-1}(\Fix([f,h]))\subset \opi{\overline p}{f^{-1}h(\overline q)},
    \\
    &\Fix([h^{-1},f])=h^{-1}(\Fix([f,h]))\subset \opi{h^{-1}f(\overline p)}{\overline q},
    \\
    &\Fix([f^{-1},h^{-1}])=h^{-1}(\Fix([h,f^{-1}]))=f^{-1}(\Fix([h^{-1},f]))\subset \opi{f^{-1}h^{-1}f(\overline p)}{h^{-1}f^{-1}h(\overline q)}.
    \end{align*}
    \item\label{Itm:nonGeometricIV} \emph{Fixed points of all commutators in $\opi{\overline q}{p}$.} We have
    \[
    \overline q<h^{-1}(\overline q)<a([h^{-1},f])<r([h^{-1},f])<f(p)<p,
    \]
    in which case, we also have
     \begin{align*}
    &\Fix([f,h])=h(\Fix([h^{-1},f]))\subset \opi{\overline q}{hf(p)},
    \\
    &\Fix([f^{-1},h^{-1}])=f^{-1}(\Fix([h^{-1},f]))\subset \opi{f^{-1}h^{-1}(\overline q)}{p},
    \\
    &\Fix([h,f^{-1}])=f^{-1}(\Fix([f,h]))=h(\Fix([f^{-1},h^{-1}]))\subset \opi{hf^{-1}h^{-1}(\overline q)}{f^{-1}hf(p)}.
    \end{align*}
    \end{enumerate}
    \end{enumerate}
\end{prop}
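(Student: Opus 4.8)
The plan rests on the fact that the four commutators in the statement are pairwise conjugate by $h$ and $f$. Directly from $[f,g]=fgf^{-1}g^{-1}$ one checks
\[
[h,f^{-1}]=h\,[f^{-1},h^{-1}]\,h^{-1},\quad [h^{-1},f]=f\,[f^{-1},h^{-1}]\,f^{-1},\quad [f,h]=f\,[h,f^{-1}]\,f^{-1}=h\,[h^{-1},f]\,h^{-1}.
\]
Since a hyperbolic-like element has exactly two fixed points, these identities give $\Fix([h,f^{-1}])=h(\Fix([f^{-1},h^{-1}]))$, and so on; in particular all the displayed equalities between fixed-point sets in the four non-geometric cases are automatic. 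Thus it suffices to locate the two fixed points of a single commutator, say $C=[f^{-1},h^{-1}]$, and then transport them by $h$, $f$, and $fh$ to obtain the other three. The whole statement then reduces to (i) pinning down $\Fix(C)$ and (ii) checking that its successive images either sweep across the four arcs $\opi{p}{q},\opi{q}{\overline p},\opi{\overline p}{\overline q},\opi{\overline q}{p}$ (the geometric case) or stay concentrated near one of them (the non-geometric cases).

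To locate $\Fix(C)$ I will write $C$ as a product of two hyperbolic-like elements and feed it into \refprop{configFix}. Namely $C=[f^{-1},h^{-1}]=(f^{-1}h^{-1}f)\cdot h=\tilde h\cdot h$, where $\tilde h=f^{-1}h^{-1}f$ is a conjugate of $h^{-1}$. Recalling that $h\in H_+(p)$ has $a(h)=\overline p$, $r(h)=p$, and that $f\in H_+(q)$ contracts towards $\overline q$, the contraction estimates give $a(\tilde h)=f^{-1}(p)\in\opi{p}{q}$ and $r(\tilde h)=f^{-1}(\overline p)\in\opi{q}{\overline p}$; likewise one records the quadrant of every image point $h^{\pm1}(q),f^{\pm1}(p),h^{\pm1}(\overline q),f^{\pm1}(\overline p)$ appearing in the statement. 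In particular the fixed points of $\tilde h$ both lie in $\opi{p}{\overline p}$, so the axes of $\tilde h$ and $h$ are unlinked, and on this arc $h$ pushes towards $\overline p$ while $\tilde h$ pushes towards $f^{-1}(p)$, i.e. in opposite directions; hence no arc joining the two attractors avoids the two repellers, and the pair $(\tilde h,h)$ realizes one of the unlinked discordant configurations (rows $2$ and $3$ of Table \ref{Table:fixedPointsComposition}). This confines $\Fix(C)=\Fix(\tilde h h)$ to one of finitely many sub-arcs cut out by the four points $p,f^{-1}(p),f^{-1}(\overline p),\overline p$.

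It then remains to translate each surviving diagram into a global configuration of all four commutators. Using the already-recorded quadrants of the image points as anchors, and transporting $\Fix(C)$ by $h$, $f$, $fh$, I read off where $\Fix([h,f^{-1}])$, $\Fix([f,h])$, $\Fix([h^{-1},f])$ sit. Exactly two patterns survive: when $\Fix(C)$ is positioned so that conjugation by $h$ and by $f$ moves it into the next arc, the four fixed-point pairs occupy the four distinct quadrants and assemble into the geometric chain of \ref{Itm:geometric}; when $\Fix(C)$ is positioned so that the conjugates remain clustered, all four pairs fall near a single quadrant, giving one of \ref{Itm:nonGeometricI}--\ref{Itm:nonGeometricIV} according to the quadrant, and the stated inclusions follow by applying $h$, $f$, $fh$ to the arc containing $\Fix(C)$.

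The genuinely laborious point—and the main obstacle—is the faithful bookkeeping in this last step: turning each abstract discordant diagram into the long chain of inequalities of \ref{Itm:geometric}, and into the precise interval inclusions of \ref{Itm:nonGeometricI}--\ref{Itm:nonGeometricIV}, while keeping the roughly twenty points mutually consistent. Exhaustiveness of the five cases is free, since \refprop{configFix} already enumerates all configurations of the pair $(\tilde h,h)$. A convenient way to organize and independently check the inequalities is a direct contraction argument run in parallel with the table: tracking the image of a quadrant such as $\opi{q}{\overline p}$ under the factors of $[h,f^{-1}]$ produces an explicit sub-arc $J$ with $[h,f^{-1}](\closure{J})\subset J$, which localizes $\Fix([h,f^{-1}])$ and fixes the relative order of $r([h,f^{-1}])$ and $a([h,f^{-1}])$ against the anchors $f^{-1}(\overline p)$ and $h(q)$—exactly the information recorded line by line in the statement.
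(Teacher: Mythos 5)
Your setup is sound and matches the paper's first steps: the conjugacy relations among the four commutators are exactly the ones the paper records, and the idea of factoring a commutator as a product of two hyperbolic-like elements and feeding it into \refprop{configFix} is the right one. The computation that $\tilde h=f^{-1}h^{-1}f$ has $a(\tilde h)=f^{-1}(p)\in\opi{p}{q}$, $r(\tilde h)=f^{-1}(\overline p)\in\opi{q}{\overline p}$, and that the pair $(\tilde h,h)$ is unlinked and discordant, is also correct.

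However, there is a genuine gap at the decisive step. A \emph{single} factorization $C=[f^{-1},h^{-1}]=\tilde h\cdot h$ locates $\Fix(C)$ only relative to the four fixed points $p,f^{-1}(p),f^{-1}(\overline p),\overline p$ of the two factors, and (since only one of the two compositions $\tilde h h$, $h\tilde h$ is being used) the relevant row of Table~\ref{Table:fixedPointsComposition} collapses to three distinguishable outcomes. That is not enough to separate the five cases of the statement: cases \ref{Itm:nonGeometricI} and \ref{Itm:nonGeometricII} both place $\Fix(C)$ in the \emph{same} arc $\opi{f^{-1}(p)}{f^{-1}(\overline p)}$ (on either side of $q$, which is not one of the four cut points), and likewise \ref{Itm:nonGeometricIII} and \ref{Itm:nonGeometricIV} both place it in $\opi{\overline p}{p}$, on either side of $\overline q$. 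Nor can this factorization ever produce the inequalities against $h^{\pm1}(q)$ and $h^{\pm1}(\overline q)$ (e.g.\ $r([f^{-1},h^{-1}])<h^{-1}(q)$ in \ref{Itm:nonGeometricI}), since those points do not appear among the fixed points of $\tilde h$ and $h$. Your assertion that ``exactly two patterns survive'' and that the quadrant is then read off by transport is therefore not justified; transporting an arc by $h$ or $f$ only gives its image, which still straddles the points you need. The missing idea — and the crux of the paper's proof — is to use \emph{two different} factorizations of the \emph{same} commutator, $[f,h]=f\cdot(hfh^{-1})^{-1}=(fhf^{-1})\cdot h^{-1}$, whose factors have fixed points at $\{q,\overline q\}$, $\{h(q),h(\overline q)\}$ and $\{f(p),f(\overline p)\}$, $\{p,\overline p\}$ respectively, and to intersect the two resulting constraints from \refprop{configFix}; this cuts the $3\times 3=9$ a priori possibilities down to five, each already tagged with the finer anchor points, after which the conjugation transport you describe (applied again to $[h,f^{-1}]$ and $[h^{-1},f]$ to resolve the remaining ambiguities) completes the argument. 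Your closing remark about a ``direct contraction argument run in parallel'' gestures at the right kind of supplementary information but is not developed enough to substitute for this second factorization.
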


\begin{figure}[ht]
        \centering
        \includegraphics[width=0.45\linewidth]{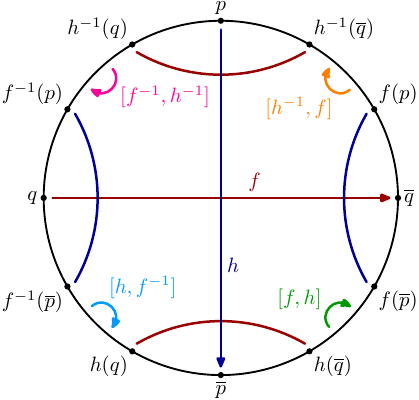}\quad \includegraphics[width=0.45\linewidth]{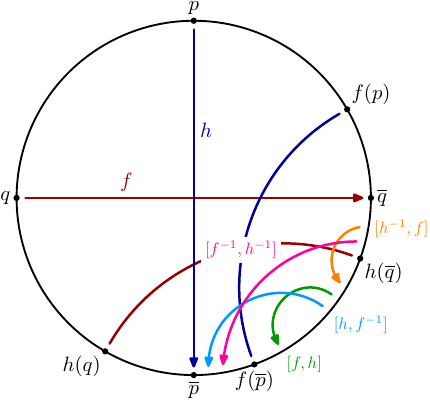}
        \caption{Configurations of fixed points of commutators in \refprop{fixCommutator}.
        Left: geometric configuration. Right:  fixed points in $\opi{\overline p}{\overline q}$.}
        \label{Fig:fixCommutator}
    \end{figure}

\begin{proof}
    Let us start with some remarks. First, the four commutators $[f,h]$, $[h,f^{-1}]$, $[h^{-1},f]$, and $[f^{-1},h^{-1}]$ are related by conjugacy in the following way:
    \[\begin{tikzcd}
    \left [f^{-1},h^{-1}\right ] \arrow[r,"h"] \arrow[d,"f"] & \left[h,f^{-1}\right] \arrow[d,"f"] \\ \left[h^{-1},f\right] \arrow[r,"h"] & \left[f,h\right]
    \end{tikzcd}\]
    (with this diagram we mean that the element on the arrow conjugates the commutator at its source to the commutator at its target: for example, in the first row we have the relation $h[f^{-1},h^{-1}]h^{-1}=[h,f^{-1}]$). Consequently, if $\Fix([f^{-1},h^{-1}])$ is contained in some interval $I$, then $\Fix([h,f^{-1}])=h(\Fix([f^{-1},h^{-1}]))\subset h(I)$, \textit{etc.} This is used to deduce where the fixed points of the other commutators are located, especially in the non-geometric configurations. Second, we work out a preliminary reduction of cases: we check with the help of Table \ref{Table:fixedPointsComposition} which configurations of fixed points of $[f,h]$ are compatible with both factorizations
    \begin{equation}\label{Eqn:fixCommutator1}
        [f,h]=f\cdot (hfh^{-1})^{-1}=(fhf^{-1})\cdot h^{-1}.
    \end{equation}
    For both factorizations, we are in the cases described in row 3 in Table \ref{Table:fixedPointsComposition}; as we only consider one composition, there is no difference here between diagrams 3b and 3c, and between diagrams 3d and 3e. This leads to $3^2=9$ total possibilities, and a case-by-case analysis shows that only five are compatible with the two factorizations in \eqref{Eqn:fixCommutator1}; we list them in Table \ref{Table:fixedPointsCommutators}.

\begin{table}[ht!]
\centering
\begin{tabular}{|r|c|c|L|}
    \hline
    & $f\cdot (hfh^{-1})^{-1}$ & $(fhf^{-1})\cdot h^{-1}$ & $\Fix([f,h])$ in\\ \hline 
    1 & 3d/e & 3b/c & \vspace{.1em} $\opi{p}{q}$
    \\
    & \includegraphics[scale=0.7]{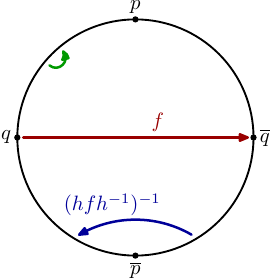} & \includegraphics[scale=0.7]{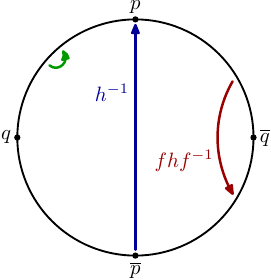}& \\ \hline
    2 & 3b/c & 3b/c & \vspace{.1em} $\opi{h(q)}{\overline p}$
    \\ [.5em]
    & \includegraphics[scale=0.7]{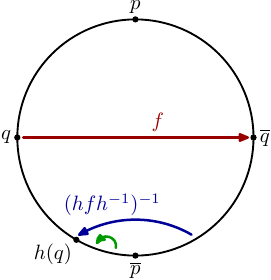} & \includegraphics[scale=0.7]{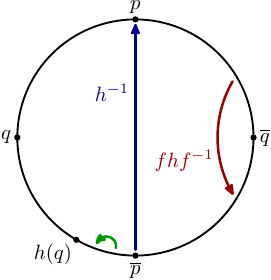}& \\ \hline
    3 & 3a & 3a & \vspace{.1em} $\opi{h(\overline q)}{f(\overline p)}$   
   \mbox{\footnotesize{(only when $h(\overline q)\in \opi{\overline p}{f(\overline p)}$)}}
    \\
    & \includegraphics[scale=0.7]{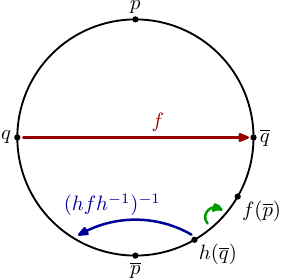} & \includegraphics[scale=0.7]{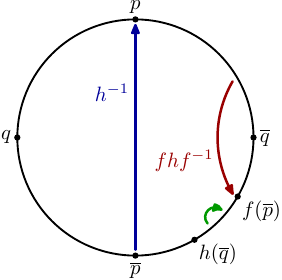}&    
    \\ \hline
    4 & 3b/c & 3d/e & \vspace{.1em} $\opi{f(\overline{p})}{h(\overline q)}$
    \mbox{\footnotesize{(only when $f(\overline p)\in \opi{\overline p}{h(\overline q)}$)}}
     \\ [.5em]
    &\includegraphics[scale=0.7]{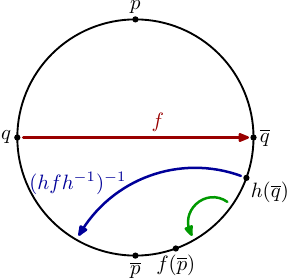}&\includegraphics[scale=0.7]{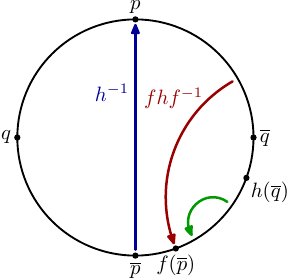}  & \\ \hline
    5 &  3d/e & 3d/e & \vspace{.1em} $\opi{\overline{q}}{f(p)}$
    \\ [.5em]
    & \includegraphics[scale=0.7]{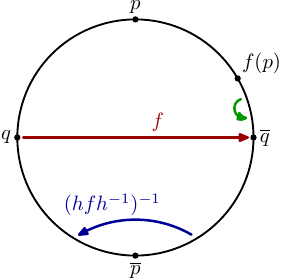}&\includegraphics[scale=0.7]{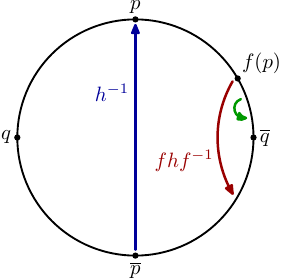} & \\ \hline
\end{tabular}

\caption{Proof of \refprop{fixCommutator}: first analysis of configurations of fixed points of a commutator $[f,h]$, where $f\in H_+(q)$ and $h\in H_+(p)$, in the linked situation $p<q<\overline p<\overline q$.}\label{Table:fixedPointsCommutators}
\end{table}

Now, consider first the case described by the fourth row in Table \ref{Table:fixedPointsCommutators}: in this case we have
\[
    \overline p<f(\overline p)<a([f,h])<r([f,h])<h(\overline q)<\overline q,
\]
and this corresponds to the non-geometric case \ref{Itm:nonGeometricIII}.

\begin{figure}[ht]
        \centering
        \includegraphics[width=0.5\linewidth]{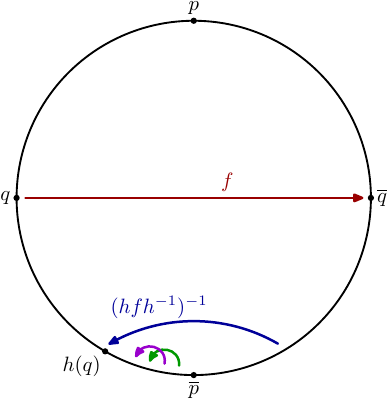}
        \caption{Proof of \refprop{fixCommutator}: configuration of fixed points for $[f,h]$ (green) and $[h,f^{-1}]$ (purple) assuming $\Fix([f,h])\subset \opi{h(q)}{\overline p}$.}
        \label{Fig:fixedPointsCommutators1}
    \end{figure}

    Next, let us focus on the second row in Table \ref{Table:fixedPointsCommutators}:
    assume that $\Fix([f,h])\subset \opi{h(q)}{\overline p}$. We reproduce in Figure \ref{Fig:fixedPointsCommutators1} the diagram 3b/c for the factorization $[f,h]=f\cdot (hfh^{-1})^{-1}$ that describes this case, including also the fixed points of the reverse composition $(hfh^{-1})^{-1}\cdot f=[h,f^{-1}]$: we must have
    \[
    q < h(q) < a([h,f^{-1}])<r([h,f^{-1}])<\overline p.
    \]
    When considering the commutator $[h,f^{-1}]$ instead of $[f,h]$, this situation corresponds to the one described in row 4 in Table \ref{Table:fixedPointsCommutators}. Therefore, in this case, we have
    \[q < h(q) < a([h,f^{-1}])<r([h,f^{-1}])<f^{-1}(\overline p)<\overline p,\]
    and this corresponds to the non-geometric case \ref{Itm:nonGeometricII} in the statement. Arguing similarly with the configuration of fixed points in row 5 (and considering the commutator $[h^{-1},f]$ instead of $[h,f^{-1}]$), we conclude that this case leads to the non-geometric configuration \ref{Itm:nonGeometricIV}.
    
    Next, consider the case described in row 3 in Table \ref{Table:fixedPointsCommutators}, and as before, we locate the fixed points of the other commutators $[h,f^{-1}]$ and $[h^{-1},f]$. See Figure \ref{Fig:fixedPointsCommutators3}. These also correspond to the case 3a/3a, and we conclude that the fixed points of the commutators $[f,h]$, $[h,f^{-1}]$, and $[h^{-1},f]$, are as described in the geometric configuration \ref{Itm:geometric}. Repeating this argument with one of the commutators $[h,f^{-1}]$ or $[h^{-1},f]$, we deduce that the fixed points of $[f^{-1},h^{-1}]$ also respect the geometric configuration \ref{Itm:geometric}.

\begin{figure}[ht]
        \centering
        \includegraphics[width=0.4\linewidth]{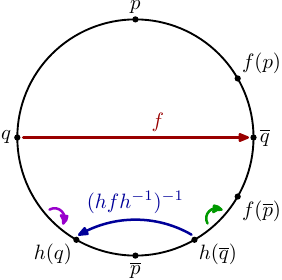}\quad\includegraphics[width=0.4\linewidth]{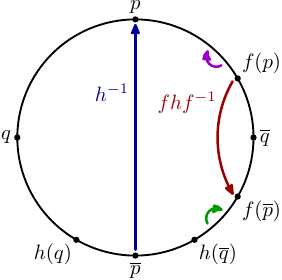}
        \caption{Proof of \refprop{fixCommutator} in the case $\Fix([f,h])\subset \opi{h(\overline q)}{f(\overline p)}$. Left: configuration of fixed points for $[f,h]$ (green) and $[h,f^{-1}]$ (purple). Right: configuration of fixed points for $[f,h]$ (green) and $[h^{-1},f]$ (purple).}
        \label{Fig:fixedPointsCommutators3}
    \end{figure}
    
    To conclude, consider the case $\Fix([f,h])\subset \opi{p}{q}$, described in row 1 of Table \ref{Table:fixedPointsCommutators}. When this happens, we also have $\Fix([h,f^{-1}])\subset \opi{p}{q}$. The arguments given so far, working with the commutator $[h,f^{-1}]$ instead of $[f,h]$, allow to conclude that this corresponds to the non-geometric situation \ref{Itm:nonGeometricI}.\qedhere

    \begin{figure}[ht]
        \centering
        \includegraphics[width=0.4\linewidth]{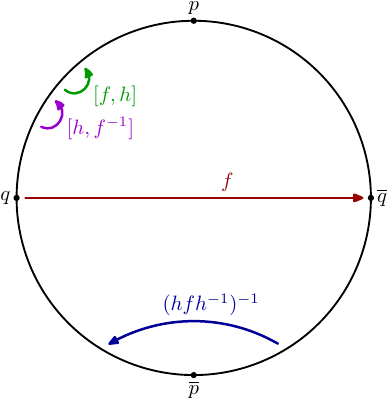}
        \caption{Proof of \refprop{fixCommutator} in the case $\Fix([f,h])\subset \opi{p}{q}$. Configuration of fixed points for $[f,h]$ (green) and $[h,f^{-1}]$ (purple).}
        \label{Fig:fixedPointsCommutators4}
    \end{figure}

\end{proof}

\newpage 

\section*{Acknowledgments}
The authors would like to thank João Carnevale and Christian Bonatti for several conversations that were at the origin of this work.
The first author was supported by the National Research Foundation of Korea Grant funded by the Korean Government (RS-2022-NR072395).
The second author was partially supported by the projects ANR Gromeov (ANR19-CE40-0007), AnoDyn (ANR-24-CE40-5065-01), and by the EIPHI Graduate School (ANR-17-EURE-0002).

\bibliographystyle{alpha}
\bibliography{biblio.bib}

\end{document}